\newcommand{\manda}{\scalebox{.8}{\,\mbox{\ding{228}}\,}}
\newcommand\pig[1]{\scalerel*[5pt]{\big#1}{%
  \ensurestackMath{\addstackgap[1.5pt]{\big#1}}}}
\newcommand{\compactlist}[1]{\setlength{\itemsep}{0pt} \setlength{\parskip}{0pt} \setlength{\leftskip}{-1.#1em}}
\newcommand{\Sum}{\textstyle\sum}
\numberwithin{equation}{section}
\DeclareRobustCommand{\SkipTocEntry}[5]{}
\theoremstyle{plain}
\newtheorem{theorem}{Theorem}[section]
\newtheorem{proposition}[theorem]{Proposition}
\newtheorem{prop}[theorem]{Proposition}
\newtheorem{lemma}[theorem]{Lemma}
\newtheorem{lem}[theorem]{Lemma}
\newtheorem{corollary}[theorem]{Corollary}
\theoremstyle{definition}
\newtheorem{definition}[theorem]{Definition}
\newtheorem{example}[theorem]{Example}
\newtheorem{exs}[theorem]{Examples}
\newtheorem{rem}[theorem]{Remark}
\newtheorem*{rem*}{Remark}
\newtheorem{free text}[theorem]{}
\declaretheorem[name=Theorem, style=italics,
  numbered=no]{theorem*}
\newcommand{\uhhu}{{\scriptscriptstyle{U}}}
 \newcommand{\N}{{\mathbb{N}}}
\newcommand{\frkg}{{\mathfrak g}}
\newcommand{\ga}{\alpha}
\newcommand{\gb}{\beta}
\newcommand{\gD}{\Delta}
\newcommand{\gve}{\varepsilon}
\newcommand{\gl}{\lambda}
\newcommand{\gr}{\rho}
\newcommand{\cI}{{\mathcal I}}
\newcommand{\cJ}{{\mathcal J}}
\newcommand{\Hom}{\operatorname{Hom}}
\newcommand{\Ext}{\operatorname{Ext}}
\newcommand{\id}{{\rm id}}
\newcommand{\Ker}{{\rm Ker}\,}
\newcommand{\g}{{\frkg}}                                                        
\newcommand{\due}[3]{{}_{{#2 }} {#1}_{{ #3}}\,}    
\newcommand{\pl}{\partial}
\newcommand{{\Hl}}{{H^{\ell}}}
\newcommand{{\mHop}}{{m_{H^{\rm op}}}}
\newcommand{{\Hop}}{{H^{\rm op}}}
\newcommand{{\mUop}}{{m_{U^{\rm op}}}}
\newcommand{{\mUopp}}{{m_{\scriptscriptstyle{U^{\rm op}}}}}
\newcommand{{\Uop}}{{U^{\rm op}}}
\newcommand{{\mVop}}{{m_{V^{\rm op}}}}
\newcommand{{\Vop}}{{V^{\rm op}}}
\newcommand{{\Ae}}{{A^{\rm e}}}
\newcommand{{\Be}}{{B^{\rm e}}}
\newcommand{{\Ree}}{{R^{\rm e}}}
\newcommand{{\He}}{{H^{\rm e}}}
\newcommand{{\Aop}}{{A^{\rm op}}}
\newcommand{{\Aope}}{({A^{\rm op}})^{\rm e}}
\newcommand{{\Aopl}}{{A^{\rm op}_\pl}}
\newcommand{{\Bop}}{{B^{\rm op}}}
\newcommand{{\Bopp}}{{\scriptscriptstyle{{B^{\rm op}}}}}
\newcommand{{\Bope}}{({B^{\rm op}})^{\rm e}}
\newcommand{{\Bpl}}{{B_\pl}}
\newcommand{{\op}}{{{\rm op}}}
\newcommand{{\coop}}{{{\rm coop}}}
\newcommand{{\sop}}{{*^{\rm op}}}
\newcommand{{\co}}{{{\rm co}}}
\newcommand{\amoda}{A^{\rm e}\mbox{-}\mathbf{Mod}}                  %
\newcommand{\bmodb}{\mathbf{Mod}\mbox{-}\Be}                     %
\newcommand{\umod}{U\mbox{-}\mathbf{Mod}}                     %
\newcommand{\modu}{\mathbf{Mod}\mbox{-}U}         %
\newcommand{\comodu}{\mathbf{Comod}\mbox{-}U}
\newcommand{\comoduop}{\mathbf{Comod}\mbox{-}U^\op}
\newcommand{\ucomod}{U\mbox{-}\mathbf{Comod}}
\newcommand{\modv}{\mathbf{Mod}\mbox{-}V}         %
\newcommand{\comodv}{\mathbf{Comod}\mbox{-}V}
\newcommand{\vcomod}{V\mbox{-}\mathbf{Comod}}
\newcommand{\vopcomod}{V^\op\mbox{-}\mathbf{Comod}}
\newcommand{\wmod}{W\mbox{-}\mathbf{Mod}}                     %
\newcommand{\moddual}{\mathbf{Mod}\mbox{-}U_*}
\newcommand{\dualcomod}{U_*\mbox{-}\mathbf{Comod}}
\newcommand{\yd}{{}^U_U\mathbf{YD}}
\newcommand{\ydu}{{}_U \hspace*{-1pt}\mathbf{YD}^{\hspace*{1pt}U}}
\newcommand{\ydfh}{{}_{F_h}\hspace*{-2pt} \mathbf{YD}^{\hspace*{.5pt}F_h}}
\newcommand{\yduh}{{}_{U_h}\hspace*{-2pt}\mathbf{YD}^{\hspace*{.5pt}U_h}}
\newcommand{\ydw}{{}_W \hspace*{-1pt}\mathbf{YD}^{\hspace*{1pt}W}}
\newcommand{\vyd}{{}^V\hspace*{-1pt}\mathbf{YD}_{\hspace*{1pt}V}}
\newcommand{\altyd}{{}_{W} \hspace*{-1pt}\mathbf{YD}^{\hspace*{1pt}W}}
\newcommand{\dualyd}{{}^{U_*}\hspace*{-1pt}\mathbf{YD}_{\hspace*{1pt}U_*}}
\newcommand{\yddual}{\mathbf{YD}^{{U^*}}_{{U^*}}}
 \newcommand{\lqua}{{\mathbf{\boldsymbol{\ell}QU}}^{A_h}}
 \newcommand{\rqua}{{\mathbf{\boldsymbol{\it r}QU}}^{ A_h}}
 \newcommand{\lqfa}{{\mathbf{\boldsymbol{\ell}QF}}^{ A_h}}
 \newcommand{\rqfa}{{\mathbf{\boldsymbol{\it r}QF}}^{ A_h}}
\newcommand{\lqur}{{\mathbf{\boldsymbol{\ell}QU}}^{ R_h}}
\newcommand{\lqfr}{{\mathbf{\boldsymbol{\ell}QF}}^{ R_h}}
\newcommand{\lact}{\smalltriangleright}
\newcommand{\ract}{\smalltriangleleft}
\newcommand{\blact}{\blacktriangleright}
\newcommand{\bract}{\blacktriangleleft}
\newcommand{{\gog}}{{G \rightrightarrows G_0}}
\newcommand{{\rra}}{\rightrightarrows}
\newcommand{{\lra}}{\ \longrightarrow \ }
\newcommand{{\lla}}{\ \longleftarrow \ }
\newcommand{{\lma}}{\ \longmapsto \ }
\newcommand{{\bull}}{{\scriptscriptstyle{\bullet}}}
\newcommand{{\qqquad}}{{\quad\quad\quad}}
\newcommand{\Aopp}{{\scriptscriptstyle{\Aop}}}
\newsavebox{\foobox}
\newcommand{\mpact}{\mbox{ \raisebox{-1pt}{\ding{227}} }}
\newcommand{\smap}{{\raisebox{0.3pt}{${{\scriptscriptstyle {[+]}}}$}}}
\newcommand{\smam}{{\raisebox{0.3pt}{${{\scriptscriptstyle {[-]}}}$}}}
\keywords{Action bialgebroids, Hopf algebroids, smash products, duality,
  quantum duality principle, Drinfeld functors}
\subjclass[2020]{
  16T05, 16T15, 16S40, 17B37, 18D40
}
\begin{document}

\title{Duality for action bialgebroids}

\author{Sophie Chemla}
\author{Fabio Gavarini}
\author{Niels Kowalzig}

\address{S.C.: Sorbonne Universit\'e \& Universit\'e Paris Cit\'e, CNRS, IMJ-PRG, F-75005 Paris, France}
\email{sophie.chemla@imj-prg.fr}

\address{F.G. \& N.K.: Dipartimento di Matematica, Universit\`a di Roma Tor Vergata, Via della Ricerca
Scientifica 1, 00133 Roma, Italy}
\email{gavarini@mat.uniroma2.it, kowalzig@mat.uniroma2.it}

\begin{abstract}
 We study the effect of linear duality on action bialgebroids (also known as smash product or scalar extension bialgebroids)  and, for those bearing a quantisation nature, the effect of Drinfeld functors underlying the quantum duality principle.
 By means of various categorical equivalences, it is shown that any braided commutative Yetter-Drinfeld algebra over any bialgebroid is also a braided commutative Yetter-Drinfeld algebra over the respective dual bialgebroid.
 This implies that the action bialgebroid of the dual exists, which is then proven to be isomorphic, as a bialgebroid, to the dual of the initial action bialgebroid:
in short, (linear) duality commutes with the action bialgebroid construction.  Similarly, for quantum groupoids to which the Drinfeld duality functors apply and the quantum duality principle holds, these Drinfeld duality functors are shown to commute \hbox{with the action bialgebroid construction as well}.
\end{abstract}

\maketitle

\tableofcontents

\section*{Introduction}

   In the theory of (Lie) groupoids, a first important subclass is that of  {\em action groupoids}, that is, those that express a group action: in fact, one can see this as the first non-trivial example that naturally extends the notion of groups to that of groupoids.  The corresponding infinitesimal notion is that of {\em action Lie algebroid}, that is, a Lie algebroid that arises from a Lie algebra acting on some representation space, see, {\em e.g.}, \cite{Mac:LGALAIDG, CasLadPir:CMFLRA}.

   In a purely algebraic language, groups correspond, loosely speaking, to (special subclasses of) Hopf algebras, and groupoids to Hopf algebroids, or, more in general, to bialgebroids; in particular, action groupoids correspond to the subclass of {\em action bialgebroids}, also known as {\em smash product bialgebroids},  or still {\em scalar extension bialgebroids}, see, {\em e.g.}, \cite{BalSzl:FGEONCB, Sto:SEHA, StoSko:EOSEHAOAUEA}.

   A special feature of the notion of Hopf algebra is its self-duality, in that the linear dual of a (finitely generated) Hopf algebra is again a Hopf algebra (up to technicalities).  This nice behaviour extends to the more general notion of bialgebroids (and even Hopf algebroids) that are also self-dual, up to an important caveat: indeed, the very notion of bialgebroid is two-fold as one defines both  {\em left}  and  {\em right} bialgebroids.  Moreover, every bialgebroid (either left or right) has two natural duals, a  {\em left}  and a  {\em right}  one;
hence, both a left and a right linear duality functor are defined and starting with left or right bialgebroids, this construction eventually gives rise to four linear duality functors.  Finally, these four duality functors switch chirality: the dual of a left bialgebroid $(U,A)$ is a right one, and vice versa.

   All this is well known \cite{KadSzl:BAODTEAD} and fully settled under some projectivity and finiteness assumptions for (one of) the underlying  $ A $-module  structure; if this fails to be, one still has some control by considering notions such as topological bialgebroids, or the like, similar to the Hopf algebra case.
      In fact, this is what happens with universal enveloping algebras  $ V\!L $  and jet bialgebroids $ J\!L $, see \cite{KowPos:TCTOHA}, associated to some Lie-Rinehart algebra  $ (L,A)  $:  assuming that  $ L $  is finitely generated projective over the base algebra  $ A $,  both $ V\!L $  and  $ J\!L $ are not finitely generated projective but they can still be seen as dual to each other, {\em i.e.}, non-degenerately paired via linear duality functors as above, up to some technicalities (including that  $ J\!L $  is a bialgebroid only in a suitable topological sense).

   A second special type of duality arises when dealing with bialgebroids  $ W_h $  that are (formal) quantisations of some  $ V\!L $  or some  $ J\!L $,  for some Lie-Rinehart algebra  $ (L,A)  $ as above: one has two functors  $  W_h \to W_h^\vee  $  and  $  W_h \to W_h^{\prime}  $  such that
   \begin{itemize}
     \compactlist{99}
     \item[--]
if  $ W_h $  is a quantisation of  $ J\!L  $,  then  $ W_h^\vee $  is a quantisation of  $ V(L^*) $,
\item[--]
  if  $ W_h $  is a quantisation of  $ V\!L  $,  then  $ W_h^{\prime} $  is a quantisation of  $ J(L^*) $,
\end{itemize}
   where $ L^* = \Hom_A(L,A)$  denotes the  dual Lie-Rinehart algebra. This phenomenon, known as
   {\em quantum duality principle\/}, sprouts from a key idea of Drinfeld for quantum groups, later extended to quantum groupoids in  \cite{CheGav:DFFQG}.  Note that, in this case, both functors  {\em preserve}  chirality, in that if  $ W_h $  is a left (resp.\ right) bialgebroid, then so are  $ W_h^\vee $ and $W_h^{\prime} $.

   In this paper, we investigate what happens with linear duality (in general) and Drinfeld duality functors (in the quantum setup) when working with action bialgebroids.

   First of all, an action bialgebroid has the form  $  R \# U  $, where  $ (U,A) $  is a left bialgebroid and  $ R $  a braided commutative monoid in the category of Yetter-Drinfeld modules over  $ U $. Theorem \ref{wieheisztdashier}  presents a couple of criteria in order to recognise those bialgebroids that are action bialgebroids. Starting from a right bialgebroid $(V,B)$ and for a
   braided commutative monoid $S$ in a respective Yetter-Drinfeld category, 
a similar construction of an action bialgebroid of the form
$ V \# S $ applies. In Lemma \ref{rosamunde} and Theorem \ref{vaexholm},  we prove (see the main text for details and notation):

\begin{theorem*}
Let $(U,A)$ be a left bialgebroid such that $\due U \lact {}$ is finitely generated $A$-projective and let $U_* := \Hom_A(\due U \lact {}, A)$ be its left dual.
Then, if $R$ is a braided commutative monoid in $\ydu$, it is also so in $\dualyd$ and vice versa;
in particular, 
there is an isomorphism 
  \begin{equation*}
  \Hom_R(R \# U, R) \simeq {U_*}
  \# R,
  \end{equation*}
  of right bialgebroids,
which, if $U$ is a right Hopf algebroid (over a left $A$-bialgebroid),
is an isomorphism of left Hopf algebroids (over right $R$-bialgebroids).
    \end{theorem*}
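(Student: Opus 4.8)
The plan is to establish the two assertions in turn: first, that a braided commutative monoid $R$ in $\ydu$ is also one in $\dualyd$ (this is Lemma~\ref{rosamunde}), and then that the left $R$-dual of the action bialgebroid $R\#U$ is the action bialgebroid $U_*\#R$ (Theorem~\ref{vaexholm}). For the first part I would invoke the braided monoidal equivalence $\Phi\colon \ydu \xrightarrow{\sim} \dualyd$ that is available once $\due U \lact {}$ is finitely generated $A$-projective: on underlying $A$-modules $\Phi$ is essentially the identity, turning the $U$-coaction into a $U_*$-action and the $U$-action into a $U_*$-coaction by means of the evaluation and coevaluation of the duality $U \dashv U_*$, and it is (strong) braided monoidal. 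Consequently $\Phi$ sends the monoid $(R,m_R,\eta_R)$ to a monoid with the \emph{same} multiplication and unit, and carries the braided commutativity relation $m_R = m_R\circ c_{R,R}$ for the braiding $c$ of $\ydu$ into the corresponding identity for the braiding of $\dualyd$; since $\Phi$ is an equivalence, the converse direction follows symmetrically. In particular, the action bialgebroid $U_*\#R$ is now defined.

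For the isomorphism, since $\due U \lact {}$ is finitely generated $A$-projective, so is the underlying left $R$-module of $R\#U \cong R\otimes_A U$; hence the left $R$-dual $\Hom_R(R\#U,R)$ is again a right $R$-bialgebroid, and the canonical chain of $R$-module isomorphisms
\begin{equation*}
\Hom_R(R\#U,\,R)\;\simeq\;\Hom_R(R\otimes_A U,\,R)\;\simeq\;\Hom_A(\due U \lact {},\,R)\;\simeq\;\Hom_A(\due U \lact {},\,A)\otimes_A R\;=\;U_*\otimes_A R
\end{equation*}
identifies its underlying module with that of $U_*\#R$. It then remains to check that this map intertwines the source and target maps, the units, the products and, crucially, the comultiplications. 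The comultiplication of $R\#U$ is built from the comultiplication of $U$ together with the $U$-coaction on $R$; dualizing it therefore yields a map controlled by the multiplication of $U_*$ together with the $U_*$-coaction on $R$, and by the first part the latter is exactly the coaction used to form $U_*\#R$. Verifying this match is the crux of the argument: one pairs the two candidate comultiplications, evaluated at a general element $u_*\otimes r$, against a test element of $R\#U$ and reconciles the outcomes by repeated use of the Yetter--Drinfeld compatibility for $R$ and of the non-degeneracy of the evaluation pairing $U\otimes_A U_*\to A$; the remaining structure maps are handled the same way. This bookkeeping among the several interlocking dualizations is the step I expect to be the main obstacle.

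For the final clause, suppose $U$ is a right Hopf algebroid over a left $A$-bialgebroid. It is enough to see that one of the two isomorphic bialgebroids is a left Hopf algebroid: the translation map of a left Hopf algebroid is uniquely determined by the underlying bialgebroid structure, so the isomorphism just established automatically transports it, making it an isomorphism of left Hopf algebroids over right $R$-bialgebroids. On the one hand, the left dual $U_*$ of the right Hopf algebroid $U$ is a left Hopf algebroid over a right $A$-bialgebroid, so $U_*\#R$ is a left Hopf algebroid over a right $R$-bialgebroid by compatibility of the action construction with the Hopf property; on the other, $R\#U$ is a right Hopf algebroid over a left $R$-bialgebroid for the same reason, and linear duality exchanges the two chiralities of the Hopf property, so its left dual $\Hom_R(R\#U,R)$ is a left Hopf algebroid over a right $R$-bialgebroid. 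Either route gives the conclusion, and one may, if desired, verify directly that the isomorphism intertwines the two translation maps, which reduces once more to the dualization identities already used.
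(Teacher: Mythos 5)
Your proposal is correct and follows essentially the same route as the paper: the statement is exactly the conjunction of Lemma~\ref{rosamunde} and Theorem~\ref{vaexholm}, your map is the paper's $\eta\colon f\mapsto \sum_j e^j\otimes_A f(1_R\otimes_A e_j)$, and your argument for the final clause (invertibility of $\gb_\ell$ only involves the product and coproduct, so the Hopf structure is transported automatically once $\eta$ is a bialgebroid isomorphism; and either $U_*\#R$ is left Hopf by Proposition~\ref{tollhaus3} or $\Hom_R(R\#U,R)$ is left Hopf as the left dual of the right Hopf algebroid $R\#U$ of Theorem~\ref{tollhaus2}) is precisely the one given there. The one place where you genuinely diverge is the first assertion: you deduce the transfer of the (braided commutative) monoid structure formally from the strict braided monoidal equivalence $\ydu\simeq\dualyd$ of Lemma~\ref{sofocle2}\,(iv), whereas the paper, in Lemma~\ref{rosamunde}, re-verifies by hand that the multiplication is compatible with the transported $U_*$-coaction and $U_*$-action and that braided commutativity \eqref{marmor4} becomes \eqref{kindertotenlieder}; your shortcut is legitimate (and cleaner) granted that lemma, while the paper's explicit computation has the side benefit of producing the formulas that are reused in the proof of Theorem~\ref{vaexholm}. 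Be aware, though, that what you label ``bookkeeping'' --- checking that $\eta$ is a map of $R$-bimodules, of $\Ree$-rings and of $R$-corings via the pairings \eqref{casinoaurora} and \eqref{casinoaurora2} --- constitutes the bulk of the paper's proof and is several pages of dual-basis and Yetter--Drinfeld manipulations; your outline identifies the correct strategy but does not execute it.
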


This expresses the fact that (linear) duality commutes with the action bialgebroid construction.   As for Drinfeld duality,
 we study the effect of the two Drinfeld functors $ (-)^\vee $  and  $ (-)' $  to 
 quantum groupoids 
 that are, in addition, action bialgebroids, say,  $ F_h \# R_h $  and  $ R_h \# U_h $. The outcome is formulated in Theorems \ref{thm: vee} \& \ref{thm: primeoprime}, see again the main text for the mentioned technical details and notation:

 \begin{theorem*}
   \
   \begin{enumerate}
     \compactlist{99}
   \item
     Let $(F_h, A_h)$ be left quantum formal series bialgebroid and 
     $ R_h $ an  $ h $-topologically  complete $ k[[h]]$-algebra
that is a braided com\-mutative monoid 
in  $ \ydfh  $, and 
such that  $  R_h / hR_h  $  is commutative. 
     Then,
up to a technical assumption, 
$ R_h $  is a braided commutative YD~algebra over  $ F_h^\vee $ as well,
and 
there is a canonical isomorphism
$$
R_h  \# F_h^\vee \ \simeq \  ( R_h  \# F_h )^{\vee}
$$
of  topological  left bialgebroids over  $ R_h $.
In particular,  $  R_h \# F_h^\vee  $  is a left quantum universal enveloping bialgebroid over  $ R_h  $.
\item
Let $(U_h,A_h)$ be a left quantum universal enveloping bialgebroid
and 
$ R_h $  an  $ h $-topologically  complete $ k[[h]]$-algebra
that is a braided com\-mutative monoid 
in  $ \yduh  $, and 
such that  $  R_h / hR_h  $  is commutative. 
Then,
up to a technical assumption, 
$ R_h $  is a braided commutative YD~algebra over  $ U'_h $ as well,  and there is a canonical isomorphism
$$
R_h  \# U_h'  \ \simeq \   (R_h  \# U_h )'
$$
of left bialgebroids over  $ R_h $.  In par\-tic\-u\-lar,  $  R_h \# U_h^{\prime}  $  is a left quantum formal series bialgebroid over  $ R_h  $.
     \end{enumerate}
   \end{theorem*}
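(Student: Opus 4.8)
The two statements are proven by the same mechanism; I describe the argument for part~(1) and flag the (softer) variant for part~(2) as I go. The plan is to combine two ingredients. The first is the linear-duality result, Lemma~\ref{rosamunde} together with Theorem~\ref{vaexholm}, which already says that \emph{linear} duality commutes with the action bialgebroid construction. The second is the explicit description of the Drinfeld functors from~\cite{CheGav:DFFQG}: $F_h^\vee$ is obtained from $F_h$ by passing to the $h$-localization $F_h[h^{-1}]$, forming the subalgebra $\sum_{n\ge 0}h^{-n}\bar J_{F_h}^{\,n}$ generated by the $h$-rescaled powers of the augmentation ideal $\bar J_{F_h}$ (the kernel of $\varepsilon_{F_h}$, modulo the usual base-algebra bookkeeping), and $h$-adically completing; dually, $U_h'$ is the subalgebra of $U_h$ cut out by the conditions $\delta^n(u)\in h^n U_h^{\widehat\otimes n}$ on the iterated reduced coproduct $\delta^n$. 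The goal is to identify $(R_h\#F_h)^\vee$ with $R_h\,\widehat{\#}\,F_h^\vee$ as topological $k[[h]]$-algebras, and then to match the bialgebroid structures over $R_h$.

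First I would upgrade Lemma~\ref{rosamunde} to the quantum setting, i.e.\ show that $R_h$ is a braided commutative Yetter--Drinfeld algebra over $F_h^\vee$ (resp.\ over $U_h'$). Since $F_h\hookrightarrow F_h^\vee$ inside $F_h[h^{-1}]$ (resp.\ $U_h'\hookrightarrow U_h$), the $F_h$-coaction on $R_h$ automatically co-restricts along (resp.\ the $U_h$-action on $R_h$ restricts along) this inclusion; the one remaining structure map---the action of the enlarged $F_h^\vee$ on $R_h$, resp.\ the corestriction of the $R_h$-coaction to the smaller $U_h'$---is exactly what the ``technical assumption'' in the statement is there to make well defined and $h$-adically continuous (concretely: that $\bar J_{F_h}$ act on $R_h$ with values in $hR_h$, resp.\ that $\delta^n$ carry the $U_h$-legs of the coaction into $h^n(\cdots)$). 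Combined with the hypothesis that $R_h/hR_h$ be commutative, Theorem~\ref{wieheisztdashier} and the construction preceding it then ensure that $R_h\,\widehat{\#}\,F_h^\vee$ (resp.\ $R_h\#U_h'$) really is a (topological) left bialgebroid over $R_h$, and that $R_h\#F_h$ is genuinely of quantum-formal-series type (resp.\ $R_h\#U_h$ of quantum-enveloping type), so that the relevant Drinfeld functor applies to it.

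The core of the proof is the algebra identification. For the action bialgebroid $R_h\#F_h$ over $R_h$ one checks, directly from the explicit counit and using the defining relations of the smash product (once the $A_h$-balancing is taken into account), that its augmentation ideal is
\[
  I\ :=\ \ker\!\big(\varepsilon_{R_h\#F_h}\colon R_h\#F_h\longrightarrow R_h\big)\ =\ R_h\,\#\,\bar J_{F_h}.
\]
Because $(1\#j_1)\cdots(1\#j_n)=1\#\,j_1\cdots j_n$, one gets $R_h\#\bar J_{F_h}^{\,n}\subseteq I^n$ for all $n$, hence $\sum_n h^{-n}I^n\supseteq R_h\#F_h^\vee$. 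For the reverse inclusion (after completion) I would expand an arbitrary product $(r_1\#j_1)\cdots(r_n\#j_n)\in I^n$ using the smash cross-relations $u\cdot r=(u_{(1)}\lact r)\,u_{(2)}$ and track, in each resulting summand $r'\#w$, the quantity $(\,h\text{-adic order of }r'\,)+(\,\bar J_{F_h}\text{-adic order of }w\,)$. Using $\Delta(j)-j\otimes 1-1\otimes j\in\bar J_{F_h}\otimes\bar J_{F_h}$ for $j\in\bar J_{F_h}$ together with the technical assumption, each of the $n$ factors contributes at least $1$ to this quantity, so $h^{-n}I^n\subseteq\sum_{b\ge 0}h^{-b}\big(R_h\#\bar J_{F_h}^{\,b}\big)$; passing to $h$-adic completions yields $(R_h\#F_h)^\vee\cong R_h\,\widehat{\#}\,F_h^\vee$. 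For part~(2) this step is easier: the coproduct of $R_h\#U_h$ only differentiates the $U_h$-leg, so $\delta^n(r\#u)$ lies in $h^n(\cdots)$ precisely when $\delta^n_{U_h}(u)$ does, whence $(R_h\#U_h)'=R_h\#U_h'$ with no localization needed. Finally I would verify that under these identifications the source and target maps over $R_h$, the coproduct, and the counit all agree---these are the compatibilities already recorded in Theorem~\ref{vaexholm}, read off in the $h$-localization $F_h[h^{-1}]$ (resp.\ inside $U_h$) and then restricted and completed---and I would read off the semiclassical limit: writing $R_0:=R_h/hR_h$, $F_0:=F_h/hF_h$, the algebra $R_0\#F_0$ is commutative (both factors are, and the braiding is trivial mod~$h$), which exhibits $R_h\#F_h^\vee$ as a left quantum universal enveloping bialgebroid over $R_h$, and dually $R_h\#U_h'$ as a left quantum formal series bialgebroid.

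The step I expect to be the main obstacle is the filtration bookkeeping for part~(1): the smash cross-relations mix the base direction $R_h$ and the fibre direction $F_h$, so a product of two elements of $\bar J_{F_h}$-adic order $1$ can produce a summand of $\bar J_{F_h}$-adic order only $1$ (with the ``lost'' order absorbed into an element of $R_h$), and it is precisely the technical assumption---supplying one extra power of $h$ each time this happens---that prevents $h^{-n}I^n$ from acquiring poles not already present in $R_h\#F_h^\vee$. Making this estimate uniform in $n$ and compatible with the $h$-adic completions, the $R_h$-bimodule structure, and the passage between $F_h$ and $F_h[h^{-1}]$ is the delicate point; by contrast, the bialgebroid-axiom checks and the semiclassical-limit identifications should be routine given Theorems~\ref{wieheisztdashier} and~\ref{vaexholm} and the classical analogue in Lemma~\ref{rosamunde}.
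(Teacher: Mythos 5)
For part~(i) your argument is essentially the paper's: the technical assumption is exactly $(\ker\gve_h)\cdot R_h\subseteq hR_h$, which is what lets the $F_h$-action extend to $h^{-1}\ker\gve_h$ (the coaction coextends for free), and your filtration bookkeeping --- tracking the sum of the $h$-adic order of the $R_h$-leg and the $\ker\gve_h$-adic order of the $F_h$-leg across products --- is precisely the paper's computation $(\ker\gve_{\cF_h})^n\subseteq\sum_{s=0}^n h^{n-s}\,R_h\otimes_{A_h}(\ker\gve_h)^s=R_h\otimes_{A_h} I_{F_h}^n$, with the ``one extra power of $h$ each time the cross-relation loses a factor of $\ker\gve_h$'' supplied by Corollary~\ref{cor:B[smash]F h-commutative}. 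So far, so good.

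For part~(ii) you diverge from the paper, and this is where your proposal has a gap. The paper does \emph{not} compute $(R_h\#U_h)'$ directly; it deliberately runs the chain
$(R_h\#U_h)'={}_*\big(((R_h\#U_h)^*)^\vee\big)\simeq{}_*\big((U_h^*\#R_h)^\vee\big)\simeq{}_*\big((U_h^*)^\vee\#R_h\big)\simeq R_h\#{}_*\big((U_h^*)^\vee\big)=R_h\#U_h'$,
i.e.\ it reduces part~(ii) to part~(i) via the linear-duality isomorphism of Theorem~\ref{vaexholm} (suitably adapted to the ind-projective topological setting), because $(-)'$ is \emph{defined} in this paper as ${}_*(-)\circ(-)^\vee\circ(-)^*$ and not by the Drinfeld-style condition $\delta^n(u)\in h^nU_h^{\widehat\otimes n}$. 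Your one-line claim that ``the coproduct only differentiates the $U_h$-leg, so $(R_h\#U_h)'=R_h\#U_h'$'' presupposes (a) that the iterated-reduced-coproduct characterisation of $(-)'$ is available for topological bialgebroids over a noncommutative, deformed base, which is not established here, and (b) that $\delta^n$ of $r\otimes_{A_h}u$ really only sees the $U_h$-leg --- but the relevant tensor powers are taken over the \emph{new} base $R_h$, whose right module structure is implemented by the target map $t(r)=r_{[0]}\otimes_{A_h}r_{[1]}$ and hence mixes the two legs, so the reduction to $\delta^n_{U_h}(u)$ is not automatic. You would either have to prove the $\delta^n$-characterisation in this generality (nontrivial) or, as the paper does, route part~(ii) through part~(i) and Theorem~\ref{vaexholm}; note also that the paper's route requires verifying the hypothesis of the right-handed version of part~(i) for $F_h:=U_h^*$ (namely $rf=0\bmod h$ for $f\in\ker\pl$) and the extra property $\ker\gve'\subseteq hU_h$, steps your proposal omits entirely.
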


These statements, in turn, express the fact that Drinfeld functors commute with the action bialgebroid construction as well.  It is worth stressing that for proving  $  (R_h \# U_h)' \simeq R_h \# U_h^{\prime}  $  we actually resort to a linear duality trick, relying on the previous results (suitably adapted to the specific situation).

   Some final words about the organisation of the paper.
 Section \ref{two}  presents some categorical equivalences regarding Yetter-Drinfeld modules and linear duality in the realm of bialgebroids.
Section \ref{three} focuses on action bialgebroids, extending the well-known construction from \cite{BrzMil:BBAD} from Hopf algebras to (right) Hopf algebroids (over left bialgebroids), formulating a right handed-version of this construction, presenting a characterisation of such action bialgebroids, and finally also the above mentioned results on the dual of action bialgebroids. Section \ref{four}
is devoted to action bialgebroids in the quantum   
 framework, namely the one set up in  \cite{CheGav:DFFQG}:  in particular, building upon the results therein about Drinfeld functors and the  \textit{Quantum Duality Principle},  we apply Drinfeld functors to action bialgebroids, thus finding that those functors ``commute'' with the action bialgebroid construction.  
 Finally, the appendices list essentially all technical tools needed in the paper.

\addtocontents{toc}{\SkipTocEntry}
 
\subsection*{Notation}
Let $k$ be a commutative ring,  possibly a field or of characteristic zero. As customary, unadorned tensor products or $\Hom$s refer to those over $k$. Sweedler notation subscripts typically refer to left bialgebroids, superscripts to right \label{rightSw} bialgebroids,
while left comodules are indicated by round brackets, right comodules by square brackets. The three appendices give a detailed account on all employed bialgebroid related notation.

\addtocontents{toc}{\SkipTocEntry}

\subsection*{Acknowledgements}
%
Partially supported by the MIUR Excellence Department Project MatMod@TOV
(CUP:E83C23000330006).
 Both F.\ G.\ and N.\ K.\ are members of the {\em Grup\-po Nazionale per le Strutture Algebriche, Geometriche e le loro
   Applicazioni} (GNSAGA-INdAM).
 N.\ K.\ moreover thanks the {\em Institut de Math\'ematiques de Jussieu -- Paris Rive Gauche}, where part of this work has been achieved, for hospitality and support via the {\em CNRS}; also partially supported by the Swedish Research Council under grant no.~2021-06594 while in residence at the Mittag-Leffler Institute in Djursholm, Sweden, during spring 2025.

\section{Categorical (co)module equivalences}
\label{two}

This section contains a couple of categorical equivalences that will be needed to dualise action bialgebroids in the next section. For all technical details and notation employed regarding bialgebroids, their duals, their modules and comodules, Yetter-Drinfeld modules, and Hopf algebroids, see the three appendices \ref{A}--\ref{C}.

\begin{lem}
\label{sofocle2}
Let $(U,A)$ be a left bialgebroid and $U_*$ and $U^*$ its left and right dual as defined in Appendix \ref{duals}.
\begin{enumerate}
\compactlist{99}
\item
\label{uhu1}
If $(U,A)$ is left and right Hopf and if both $U_\ract$ and $\due U \lact {}$ are right and left projective over $A$, then there is a (strict) equivalence
\begin{equation}
  \label{curious1}
\ucomod \simeq \comodu,
\end{equation}
which induces a (strict) equivalence
$$
\yd \simeq \ydu
$$
of monoidal categories.
\item
  There are (strict) equivalences
$$
\mathbf{Mod}\mbox{-}{U^*} \simeq \ucomod, \qquad  \comodu \simeq \mathbf{Mod}\mbox{-}{U_*}
$$
of monoidal categories,
which under the assumptions of part (i) induce an equivalence
\begin{equation}
  \label{spargelsuppe}
\mathbf{Mod}\mbox{-}{U^*} \simeq \mathbf{Mod}\mbox{-}{U_*}
\end{equation}
of monoidal categories.
\item
\label{uhu2}
If $U_\ract$ resp.\ $\due U \lact {}$ is finitely generated $A$-projective, then
there is a (strict) equivalence
$$
\mathbf{Comod}\mbox{-}U^* \simeq {U\mbox{-}\mathbf{Mod}} \quad \mbox{resp.} \quad {U\mbox{-}\mathbf{Mod}} \simeq U_*\mbox{-}\mathbf{Comod},
$$
of monoidal categories,
and hence, if both projectivity assumptions on $U$ are met, then one has a (strict) equivalence
\begin{equation}
  \label{curious2}
 \mathbf{Comod}\mbox{-}U^* \simeq U_*\mbox{-}\mathbf{Comod}
 \end{equation}
 of monoidal categories.
\item
\label{uhu3}
If $U_\ract$ is finitely generated $A$-projective, then by means of parts (i)~\&~(ii), there is a (strict) braided
 monoidal
equivalence
$$
\yddual \simeq \yd.
$$
with respect to the right dual. If $\due U \lact {}$ instead is finitely generated $A$-projective, then
there is a braided
(strict)
 monoidal
equivalence
$$
\dualyd \simeq \ydu
$$
with respect to the left dual.
\item
Therefore, in case $U$ is both left and right Hopf and also finitely generated $A$-projective in both aforementioned senses, one has a (strict) equivalence
  $$
\yddual \simeq \dualyd
  $$
of braided monoidal categories.
\end{enumerate}
\end{lem}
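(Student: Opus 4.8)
The plan is to construct explicit functors for parts (i)--(iv) from the Hopf structure and the duality pairings, and then to deduce part (v) by composition. For part (i), I would turn a left $U$-comodule into a right one by means of the translation maps furnished by the two Hopf structures: writing a left coaction as $m \mapsto m_{(-1)} \otimes m_{(0)}$ and denoting the translation map of the left Hopf structure by $u \mapsto u_{+} \otimes u_{-}$, the candidate right coaction is the Hopf-algebroid analogue $m \mapsto m_{(0)} \otimes (m_{(-1)})_{-}$ of the classical formula $m \mapsto m_{(0)} \otimes S(m_{(-1)})$. Coassociativity and counitality of the new coaction follow from the bialgebroid axioms together with the defining identities of the translation map, while the simultaneous left-and-right Hopf hypotheses guarantee that the two translation maps are mutually inverse, so that this functor and its companion built from the right Hopf structure are quasi-inverse; this yields the equivalence $\ucomod \simeq \comodu$. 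The left/right $A$-projectivity makes the relevant $\otimes_A$ exact and the comodule categories monoidal, and the swap is strict monoidal. Lifting to Yetter--Drinfeld modules, I keep the $U$-action untouched and apply the comodule swap to the coaction; the one substantive point is to verify that the left-left compatibility of $\yd$ becomes the left-module/right-comodule compatibility of $\ydu$ and that the braiding is preserved, which is a direct computation with the translation-map identities.

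Parts (ii) and (iii) are the comodule/dual-module correspondence carried out over $A$ through the canonical pairings $U \otimes_A U_* \to A$ and $U \otimes_A U^* \to A$ of Appendix~\ref{duals}. For part (ii) I send a right $U$-comodule $(M, m \mapsto m_{[0]} \otimes m_{[1]})$ to the right $U_*$-module with $m \cdot \varphi := m_{[0]}\, \varphi(m_{[1]})$, and dually for $\mathbf{Mod}\mbox{-}U^* \simeq \ucomod$; monoidality is immediate because the pairing intertwines the coproduct of $U$ with the product of the dual, so that tensor products of comodules correspond to tensor products of modules, and the quasi-inverse is furnished by the very construction of the dual in Appendix~\ref{duals}, so that no finiteness is required. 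Part (iii) is the reverse passage, where finiteness enters genuinely: when $\due U \lact {}$ (resp.\ $U_\ract$) is finitely generated $A$-projective, the canonical map $M \otimes_A U_* \to \Hom_A(\due U \lact {}, M)$ is an isomorphism, so a $U$-action $U \otimes M \to M$ dualises to a coaction $M \to M \otimes_A U_*$; coassociativity of the coaction is precisely associativity of the action dualised against the coassociativity of the coproduct, and monoidality is again read off the pairing.

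For part (iv) I assemble these dualities on the two halves of a Yetter--Drinfeld structure. An object of $\yddual$ carries a right $U^*$-comodule and a right $U^*$-module; applying part (ii) to the module part yields a left $U$-comodule, while part (iii)---available exactly because $U_\ract$ is finitely generated $A$-projective---turns the $U^*$-comodule part into a left $U$-module, landing the object in $\yd$. The left-dual variant $\dualyd \simeq \ydu$ is obtained symmetrically, using part (iii) for $\due U \lact {}$ to convert the $U_*$-comodule into a $U$-module and part (ii) to convert the $U_*$-module into a $U$-comodule. The main obstacle, and the real content of the lemma, is to check that under this exchange the Yetter--Drinfeld compatibility over the dual transforms into the Yetter--Drinfeld compatibility over $U$, and that the braidings match: each amounts to transporting a single axiom across the evaluation pairing and the coproduct of the dual, but the module half and the comodule half must be matched simultaneously, which is where compatibility of the pairing with both the multiplication and the comultiplication of $U$ is used in full.

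Finally, for part (v), when $U$ is left and right Hopf and finitely generated $A$-projective in both senses, all hypotheses of parts (i) and (iv) are met and I compose the braided monoidal equivalences
\[
\yddual \ \simeq\ \yd \ \simeq\ \ydu \ \simeq\ \dualyd,
\]
the outer two coming from part (iv) for the right and the left dual respectively, and the middle one from part (i). Since each is a strict braided monoidal equivalence, so is the composite, giving the asserted equivalence.
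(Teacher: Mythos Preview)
Your overall strategy matches the paper's: translation maps to swap left/right comodules in (i), the evaluation pairing for (ii) and (iii), and assembly of these for (iv)--(v). A few details differ. First, your candidate right coaction $m \mapsto m_{(0)} \otimes (m_{(-1)})_{-}$ is not quite well-defined over $A$; the paper's formula (going left-to-right) is $m \mapsto \gve(m_{(-1)\smap})\, m_{(0)} \otimes_A m_{(-1)\smam}$, with the counit factor needed to land in the correct $A$-balanced tensor product---in the Hopf algebra case this factor is invisible, but here it is not. Second, your phrasing ``the two translation maps are mutually inverse'' is off: $u_+ \otimes u_-$ and $u_\smap \otimes u_\smam$ are not inverses of one another; rather, the functor built from one Hopf structure is quasi-inverse to the functor built from the other. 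Third, the paper does not re-derive the comodule/module correspondences of (i)--(iii) but cites prior work, reserving its effort for two explicit verifications you only sketch: that the YD compatibility \eqref{yd2} transports to \eqref{yd} under the comodule swap (a lengthy chain using \eqref{Sch3}--\eqref{Tch8} and the mixed identity \eqref{mampf3}), and that the right-left YD condition over $U_*$ holds in (iv). These are the ``substantive points'' you correctly identify as the real content; the paper carries them out in full rather than leaving them as direct computations.
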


\begin{proof}
 We limit ourselves to investigate what is happening on the objects of the respective categories and leave all aspects regarding the respective morphisms to the reader. Observe that in all parts the functor that induces the claimed equivalence is the identity on objects.
  \begin{enumerate}
    \compactlist{99}
  \item
    The equivalence of the comodule categories has been given in \cite[Thm.~4.1.1]{CheGavKow:DFOLHA}: let us only recall from {\em loc.~cit.}\ that if $U$ is left Hopf and $U_\ract$ right projective over $A$, as well as $m \mapsto m_{[0]} \otimes_A m_{[1]}$ a right $U$-coaction on a right $A$-module $M$, then
    \begin{equation}
      \label{jamais1}
      \gl_M \colon M \to U \otimes_A M, \quad m \mapsto
      m_{[1]-} \otimes_A m_{[0]} \gve(m_{[1]+}),
    \end{equation}
    defines a left $U$-coaction on $M$;
while if $U$ is right Hopf and $\due U \lact {}$ projective over $A$,  as well as starting from a left coaction $m \mapsto m_{(-1)} \otimes_A  m_{(0)}$ on a left $A$-module $M$, then
        \begin{equation*}
      \rho_M \colon M \to M \otimes_A U, \quad m \mapsto
\gve(m_{(-1)\smap}) m_{(0)} \otimes_A  m_{(-1)\smam},
      \end{equation*}
        defines a right $U$-coaction on $M$.

        Let us now assume that $M \in \ydu$ is a left-right YD module in the sense of \S\ref{ydydydydr}. If $(U,A)$ is both left and right Hopf, then $M$ is a left-left YD module as well with respect to the left $U$-coaction \eqref{jamais1}, that is, fulfils Eq.~\eqref{yd}: indeed, one has,
  \begin{eqnarray*}
&&
    (u_{(1)} m)_{(-1)} u_{(2)} \otimes_A (u_{(1)} m)_{(0)}
\\
  &  {\overset{\scriptscriptstyle{
\eqref{jamais1}
}}{=}}
    &
(u_{(1)} m)_{[1]-} u_{(2)} \otimes_A (u_{(1)} m)_{[0]} \gve \big((u_{(1)} m)_{[1]+} \big)
\\
&
{\overset{\scriptscriptstyle{
}}{=}}
    &
(\gve(u_{(1)}) \lact u_{(2)} m)_{[1]-} u_{(3)}
\otimes_A
(\gve(u_{(1)}) \lact u_{(2)} m)_{[0]} \gve \big((\gve(u_{(1)}) \lact u_{(2)} m)_{[1]+} \big)
\\
&
{\overset{\scriptscriptstyle{
\eqref{ha3}
}}{=}}
&
\big(\gve(u_{(1)}) \blact (u_{(2)} m)_{[1]}\big)_- u_{(3)} \otimes_A (u_{(2)} m)_{[0]} \gve \pig(\big(\gve(u_{(1)}) \blact (u_{(2)} m)_{[1]}\big)_+ \pig)
\\
&
{\overset{\scriptscriptstyle{
\eqref{Tch7}
}}{=}}
&
\big((u_{(2)} m)_{[1]} u_{(1)\smap} u_{(1)\smam} \big)_- u_{(3)} \otimes_A (u_{(2)} m)_{[0]} \gve \pig(\big((u_{(2)} m)_{[1]} u_{(1)\smap} u_{(1)\smam} \big)_+ \pig)
\\
&
{\overset{\scriptscriptstyle{
\eqref{Tch4}
}}{=}}
&
 \big((u_{\smap(2)} m)_{[1]} u_{\smap(1)} u_{\smam} \big)_- u_{\smap(3)} \otimes_A (u_{\smap(2)} m)_{[0]} \gve \pig( \big((u_{\smap(2)} m)_{[1]} u_{\smap(1)}
u_{\smam}\big)_+ \pig)
\\
&
{\overset{\scriptscriptstyle{
\eqref{yd2}
}}{=}}
&
 \big(u_{\smap(2)} m_{[1]}u_{\smam} \big)_- u_{\smap(3)} \otimes_A \big(u_{\smap(1)} m_{[0]}\big) \gve \pig( \big( u_{\smap(2)} m_{[1]}u_{\smam} \big)_+ \pig)
 \\
&
{\overset{\scriptscriptstyle{
\eqref{Sch6}
}}{=}}
&
u_{\smam-} m_{[1]-} u_{\smap(2)-} u_{\smap(3)} \otimes_A \big(u_{\smap(1)} m_{[0]}\big) \gve \big( u_{\smap(2)+} m_{[1]+}u_{\smam+} \big)
\\
&
{\overset{\scriptscriptstyle{
\eqref{Sch3}
}}{=}}
&
u_{\smam -} m_{[1]-} \otimes_A \big(u_{\smap(1)} m_{[0]}\big) \gve \big( u_{\smap(2)} m_{[1]+}u_{\smam +} \big)
\end{eqnarray*}
  \begin{eqnarray*}
&
{\overset{\scriptscriptstyle{
\eqref{mampf3}
}}{=}}
&
u_{(1)} m_{[1]-} \otimes_A \big(u_{(2)\smap(1)} m_{[0]}\big) \gve \big( u_{(2)\smap(2)} m_{[1]+}u_{(2)\smam} \big)
\\
&
{\overset{\scriptscriptstyle{
\eqref{Tch4}
}}{=}}
&
u_{(1)} m_{[1]-} \otimes_A \big(u_{(2)\smap } m_{[0]}\big) \gve \big( u_{(3)} m_{[1]+}u_{(2)\smam} \big)
\\
&
{\overset{\scriptscriptstyle{
}}{=}}
&
u_{(1)} m_{[1]-} \otimes_A \big(u_{(2)\smap } m_{[0]}\big) \gve \big( u_{(3)} m_{[1]+} \bract \gve(u_{(2)\smam}) \big)
\\
&
{\overset{\scriptscriptstyle{
\eqref{Sch9}
}}{=}}
&
u_{(1)} \big(m_{[1]} \bract \gve(u_{(2)\smam})\big)_- \otimes_A \big(u_{(2)\smap } m_{[0]}\big) \gve \big( u_{(3)} \big(m_{[1]} \bract \gve(u_{(2)\smam})\big)_+ \big)
\\
&
{\overset{\scriptscriptstyle{
\eqref{ha3}
}}{=}}
&
u_{(1)} m_{[1]-} \otimes_A \big(u_{(2)\smap} (\gve(u_{(2)\smam}) m_{[0]})\big) \gve \big( u_{(3)} m_{[1]+}\big)
\\
&
{\overset{\scriptscriptstyle{
\eqref{ydforget2}
}}{=}}
&
u_{(1)} m_{[1]-} \otimes_A \big(u_{(2)\smap} s(\gve(u_{(2)\smam})) m_{[0]}\big) \gve \big( u_{(3)} m_{[1]+}\big)
\\
&
{\overset{\scriptscriptstyle{
\eqref{Tch8}
}}{=}}
&
u_{(1)} m_{[1]-} \otimes_A \big(u_{(2)} m_{[0]}\big) \gve \big( u_{(3)} m_{[1]+}\big)
\\
&
{\overset{\scriptscriptstyle{
\eqref{ydforget2}, \eqref{ha3}
}}{=}}
&
u_{(1)} m_{[1]-} \otimes_A u_{(2)} \big(m_{[0]} \gve (m_{[1]+})\big)
\\
&
{\overset{\scriptscriptstyle{
\eqref{jamais1}
  }}{=}}
&
u_{(1)} m_{(-1)} \otimes_A u_{(2)} m_{(0)},
  \end{eqnarray*}
  where we used counitality and the character-like property of a bialgebroid counit in the second, eleventh, and in the penultimate step, along with the Takeuchi property of the bialgebroid coproduct.
   Hence, $M$ also fulfils the defining Eq.~\eqref{yd} for left-left YD modules. The opposite implication is left to the reader.
     \item
  The first two equivalences of  part  (i) appear, {\em e.g.}, in \cite[Prop.~4.2.1]{CheGavKow:DFOLHA}. For later use, we only recall from {\em op.~cit.} that on a right $U$-comodule $M$, a right module structure over the left dual
$U_*$ is defined by:
  \begin{equation}
    \label{onirique}
m\psi := m_{[0]} \langle \psi, m_{[1]} \rangle.
  \end{equation}
  The third equivalence follows from the previous two by combining them with part (i).
\item
    The first equivalence appears in
    \cite[Lem.~4.6]{Kow:WEIABVA},
    the second equivalence follows analogously. For later use, we only state that, if
$\{e_j\}_{1 \leq j \leq n}  \in
    U$ and $ \{e^j\}_{1 \leq j \leq n}  \in {U_*}$ is a dual basis, then
by means of
    \begin{equation}
      \label{ludovisi}
      \begin{array}{rrcl}
        &        m \mapsto m^{(-1)} \otimes_A m^{(0)} &\!\!:=&\!\!
        \Sum_j e^j \otimes_A e_j m,
\\[1mm]
        \mbox{resp.} \quad &
        um &\!\!:=&\!\! \langle m^{(-1)}, u \rangle m^{(0)},
\end{array}
      \end{equation}
one    assigns to any left $U$-action on $M$ a left $U_*$-coaction on $M$, resp.\ to any left $U_*$-coaction $m \mapsto m^{(-1)} \otimes_A m^{(0)}$ on $M$ a left $U$-action on $M$. As a result of both equivalences, if $M$ is a right $U^*$-comodule with coaction denoted $m \mapsto m^{[0]} \otimes_A m^{[1]}$, then
    \begin{equation}
      \label{vendome}
m^{(-1)} \otimes_A m^{(0)} := \Sum_j e^j \otimes_A  m^{[0]} \langle m^{[1]}, e_j \rangle
    \end{equation}
    yields a left $U_*$-coaction, which establishes the equivalence \eqref{curious2}. Observe that for this no (left or right) Hopf structure is needed but see, however, Remark \ref{louvre}.
\item
  The first equivalence still appears in \cite[Lem.~4.6]{Kow:WEIABVA}.
The second one, in turn, is a mirrored version of the first with an analogous proof, which we nevertheless write down here for future reference. To prove the Yetter-Drinfeld property,
  \begin{eqnarray*}
    &&
    \langle \psi^{(2)} (m \psi^{(1)})^{(-1)}, u \rangle \otimes_A (m  \psi^{(1)})^{(0)}
\\
&
{\overset{\scriptscriptstyle{
\eqref{LDMon}
}}{=}}
&
\big\langle (m \psi^{(1)})^{(-1)}, u_{(1)} \ract  \langle \psi^{(2)} , u_{(2)} \rangle\big\rangle \otimes_A (m \psi^{(1)})^{(0)}
\\
&
{\overset{\scriptscriptstyle{
\eqref{onirique}
}}{=}}
&
\big\langle (m_{[0]} \langle \psi^{(1)}, m_{[1]} \rangle)^{(-1)},
u_{(1)}
\ract \langle \psi^{(2)} , u_{(2)} \rangle
\big\rangle \otimes_A (m_{[0]} \langle \psi^{(1)}, m_{[1]} \rangle)^{(0)}
\\
&
{\overset{\scriptscriptstyle{
\eqref{ha2}
}}{=}}
&
\pig\langle {m_{[0]}}^{(-1)} \ract \langle \psi^{(1)}, m_{[1]} \rangle,  u_{(1)}
\ract \langle \psi^{(2)} ,  u_{(2)} \rangle
\pig\rangle
\otimes_A {m_{[0]}}^{(0)}
\\
&
{\overset{\scriptscriptstyle{
\eqref{duedelue}
}}{=}}
&
\pig\langle {m_{[0]}}^{(-1)},  u_{(1)}
\ract \big\langle \psi^{(2)} ,  u_{(2)}
\bract \langle \psi^{(1)}, m_{[1]} \rangle
\big\rangle
\pig\rangle
\otimes_A {m_{[0]}}^{(0)}
\\
&
{\overset{\scriptscriptstyle{
\eqref{trattovideo}
}}{=}}
&
\big\langle {m_{[0]}}^{(-1)},  u_{(1)} \ract
\langle \psi, u_{(2)} m_{[1]} \rangle
\big\rangle \otimes_A {m_{[0]}}^{(0)}
\\
&
{\overset{\scriptscriptstyle{
\eqref{ha2}, \eqref{ludovisi}
}}{=}}
&
\textstyle\sum_j
\langle e^j, u_{(1)} \rangle
\otimes_A
(e_j m_{[0]}) \langle \psi, u_{(2)} m_{[1]} \rangle
\\
&
{\overset{\scriptscriptstyle{
\eqref{schizzaestrappa1}
}}{=}}
&
1 \otimes_A
 (u_{(1)}m_{[0]})\langle \psi, u_{(2)} m_{[1]} \rangle
\\
&
{\overset{\scriptscriptstyle{
\eqref{yd2}
}}{=}}
&
1 \otimes_A
 (u_{(2)}m)_{[0]}  \langle \psi, (u_{(2)} m)_{[1]} u_{(1)} \rangle
\\
&
{\overset{\scriptscriptstyle{
\eqref{trattovideo}
}}{=}}
&
1 \otimes_A
(u_{(2)}m)_{[0]}
\big\langle \psi^{(2)},  (u_{(2)} m)_{[1]}
\bract \langle \psi^{(1)} , u_{(1)} \rangle
\big\rangle
\end{eqnarray*}
  \begin{eqnarray*}
&
{\overset{\scriptscriptstyle{
}}{=}}
&
\langle \psi^{(1)} , u_{(1)} \rangle
\otimes_A
(u_{(2)}m)_{[0]} \big\langle \psi^{(2)}, (u_{(2)} m)_{[1]}  \big\rangle
\\
&
{\overset{\scriptscriptstyle{
\eqref{onirique}
}}{=}}
&
\langle \psi^{(1)} , u_{(1)} \rangle
\otimes_A
(u_{(2)} m) \psi^{(2)}
\\
&
{\overset{\scriptscriptstyle{
\eqref{ludovisi}
}}{=}}
&
\langle \psi^{(1)} , u_{(1)} \rangle
\otimes_A
(\langle m^{(-1)}, u_{(2)} \rangle m^{(0)} ) \psi^{(2)}
\\
&
{\overset{\scriptscriptstyle{
}}{=}}
&
\big\langle \langle m^{(-1)}, u_{(2)} \rangle \lact \psi^{(1)}  , u_{(1)} \big\rangle
\otimes_A m^{(0)} \psi^{(2)}
\\
&
{\overset{\scriptscriptstyle{
\eqref{duedelue}, \eqref{LDMon}
}}{=}}
&
\langle m^{(-1)} \psi^{(1)}, u \rangle \otimes_A m^{(0)} \psi^{(2)},
\end{eqnarray*}
which proves Eq.~\eqref{borghese} for right-left YD modules over right bialgebroids, as desired.
\item
The proof of this part now follows directly from the preceding statements.
  \end{enumerate}
  This concludes the proof of the lemma.
  \end{proof}

\begin{rem}
\label{louvre}
  \
  \begin{enumerate}
    \compactlist{99}
  \item
    The equivalence \eqref{spargelsuppe} can also be proven directly without the detour passing through comodule categories by considering the maps $S^*$ and $S_*$ introduced in \cite[\S4.2]{CheGavKow:DFOLHA}.
  \item
One might be tempted to think that the equivalence
$
 \mathbf{Comod}\mbox{-}U^* \simeq U_*\mbox{-}\mathbf{Comod}
 $
 in \eqref{curious2} is a simple consequence of a right bialgebroid version of the comodule equivalence \eqref{curious1} in part (i) followed by the maps $S^*$ resp.\ $S_*$ just mentioned, which then necessarily requires (left and right) Hopf structures, while the explicit formula \eqref{vendome} given in the proof shows that this is not so. Indeed, if $U$ is left and right Hopf, then both $U^*$ and $U_*$ are so as well as indicated in \S\ref{naemlichhier}, which leads to, for example,
 $
 \mathbf{Comod}\mbox{-}U^* \simeq U^*\mbox{-}\mathbf{Comod};
 $
passing then from $U^*\mbox{-}\mathbf{Comod}$ to $U_*\mbox{-}\mathbf{Comod}$ by the map $S^*$ mentioned above, however, {\em eliminates} the Hopf structure again in the explicit computation, and one reobtains, mildly surprisingly, the simple explicit formula \eqref{vendome} that establishes the equivalence \eqref{curious2}.
\end{enumerate}
  \end{rem}

\noindent By the lemma just proven, we can perform the first step towards dualising action bialgebroids.

\begin{lem}
  \label{rosamunde}
If $R$ is a braided commutative monoid in $\ydu$, then it is also so in $\dualyd$ and vice versa, provided that $\due U \lact {}$ is finitely generated $A$-projective.
\end{lem}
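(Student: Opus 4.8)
The plan is to reduce the statement entirely to the braided monoidal equivalence $\dualyd \simeq \ydu$ supplied by Lemma \ref{sofocle2}\itref{uhu3}, observing that ``braided commutative monoid'' is a purely categorical notion: it refers to an algebra object $R$ equipped with a multiplication $\mu\colon R\otimes_A R\to R$ and unit, satisfying associativity and unitality, together with the braided-commutativity condition $\mu = \mu\circ c_{R,R}$, where $c$ is the braiding of the Yetter-Drinfeld category in question. Since any strict braided monoidal equivalence sends algebra objects to algebra objects and, crucially, intertwines the two braidings, it must carry braided commutative monoids to braided commutative monoids; running the inverse equivalence gives the converse. This is the whole content of the lemma, so the proof is essentially a one-line invocation.

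Concretely, I would first recall that under the hypothesis that $\due U \lact {}$ is finitely generated $A$-projective, Lemma \ref{sofocle2}\itref{uhu3} provides a strict braided monoidal equivalence $\Phi\colon \dualyd \to \ydu$ which, as noted in the proof of that lemma, is the identity on underlying objects; in particular the underlying $A$-module of $R$, its $U$-action (resp.\ $U_*$-coaction) and its $U_*$-coaction (resp.\ $U$-action) are related by the explicit formulas \eqref{ludovisi}, so there is genuinely no change of data, only a change of the ambient categorical structure. Then I would note that $\Phi$ being strict monoidal means $R\otimes_A R$ computed in one category agrees with that computed in the other, so the multiplication map $\mu$ and unit $\eta$ of $R$ are literally the same morphisms in both categories, and $\Phi$ being braided means $\Phi(c^{\dualyd}_{R,R}) = c^{\ydu}_{\Phi R,\Phi R}$, i.e.\ the two braidings on $R\otimes_A R$ coincide under the identification. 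Hence $\mu\circ c^{\ydu}_{R,R} = \mu\circ c^{\dualyd}_{R,R}$, and braided commutativity in one category is equivalent to braided commutativity in the other; the monoid axioms transport trivially since they involve only $\mu$, $\eta$ and the (strictly preserved) associativity and unit constraints. The ``vice versa'' is just the same argument applied to the inverse equivalence $\Phi^{-1}$.

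I do not anticipate a genuine obstacle here, since all the work has been front-loaded into Lemma \ref{sofocle2}. The only point that requires a moment's care is making sure the braiding is actually preserved on the nose and not merely up to a coherent isomorphism: because the equivalence in Lemma \ref{sofocle2}\itref{uhu3} is \emph{strict} and identity-on-objects, the braiding isomorphism $c_{R,R}$ is an honest morphism $R\otimes_A R\to R\otimes_A R$ in both categories and the two coincide, so the braided-commutativity equation $\mu = \mu\circ c_{R,R}$ is verbatim the same equation on both sides. One could, if desired, also make the transported structure fully explicit by writing out the braiding of $\dualyd$ on $R$ in terms of the $U_*$-coaction coming from \eqref{ludovisi} and checking directly that it reproduces the $U$-YD braiding on $R$, but given Lemma \ref{sofocle2}\itref{uhu3} this is unnecessary. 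Thus the proof consists of invoking that braided monoidal equivalence and reading off that it preserves braided commutative monoids in both directions.
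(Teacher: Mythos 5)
Your reduction is formally valid: ``monoid in $\ydu$'' (Eqs.~\eqref{marmor1}--\eqref{marmor3}) and ``braided commutative'' (Eq.~\eqref{marmor4}) are indeed purely categorical notions --- an algebra object satisfying $\mu = \mu\circ c_{R,R}$ --- and likewise for $\dualyd$ (Eqs.~\eqref{zloty3} \& \eqref{kindertotenlieder}), so a strict, identity-on-objects, braided monoidal equivalence transports one to the other and back. The paper, however, takes a more computational route: it invokes Lemma \ref{sofocle2}\itref{uhu3} only for the object-level statement that $R\in\ydu$ lies in $\dualyd$, and then verifies by hand, via the explicit correspondences \eqref{ludovisi} and \eqref{onirique}, that the multiplication remains a $U_*$-comodule algebra map (with the opposite product, as in \eqref{zloty3}) and a $U_*$-module algebra map, and that \eqref{marmor4} becomes \eqref{kindertotenlieder}. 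This is not redundant, because the proof of Lemma \ref{sofocle2} explicitly confines itself to objects and defers everything else to the reader; in particular, the assertions that the equivalence in part \itref{uhu3} is \emph{strictly monoidal} (i.e., that the $U_*$-coaction induced by \eqref{ludovisi} from the diagonal $U$-action on $R\otimes_A R$ coincides with the YD tensor-product coaction of $\dualyd$) and \emph{braided} are stated there but never checked. Your one-line invocation therefore silently outsources precisely the computations that the paper's proof of Lemma \ref{rosamunde} carries out. If Lemma \ref{sofocle2}\itref{uhu3} may be cited as a black box, your argument is complete and cleaner; a self-contained write-up would still have to supply the monoidality/braiding verification, which is where the actual content lies. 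What your approach buys is conceptual transparency and brevity; what the paper's buys is the explicit formulas for the transported structures, which are reused later, for instance in the proof of Theorem \ref{vaexholm}.
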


\begin{proof}
That an object $R \in \ydu$ is also an object in $\dualyd$ follows from Lemma \ref{sofocle2}, part (iv). We are left with proving that monoids both in $\umod$ resp.\ $\comodu$ under the equivalences from Lemma \ref{sofocle2}, part (ii), resp.\ part (i), turn into monoids in $\dualcomod $ resp.\ $\moddual $, and finally that the property of (braided) commutativity is preserved.

As for the first statement, assume that $(R, \cdot)$ be a monoid in $\umod$, which manifests itself in Eqs.~\eqref{marmor1} \& \eqref{marmor2}.
Under the equivalence of Lemma \ref{sofocle2}, part (iii), by means of Eq.~\eqref{ludovisi},  this defines a left $U_*$-coaction $\gl_R \colon R \to U_* \times_A R$ on $R$. Pairing then the element
  $
\Sum_j\gl_R(r \cdot r') = e^j \otimes_A e_j (r \cdot r')
  $
with any $u \in U$ in the first tensor factor, yields
$$
\Sum_j \langle e^j, u \rangle \lact e_j (r \cdot r') = u(r \cdot r') =
  (u_{(1)} r)\cdot (u_{(2)} r'),
$$
using Eq.~\eqref{marmor1}.
On the other hand, since the codomain $U_* \times_A R$ of the coaction $\gl_R$ is an $A$-ring, it makes sense to write 
$$
\gl_R(r) \gl_R(r') = \Sum_{j,k} (e^j \otimes_A e_j r)(e^k \otimes_A e_k r')
=
 \Sum_{j,k} e^ke^j \otimes_A (e_j r)\cdot(e_k r'),
$$
using the opposite product in $U_*$ as in Eq.~\eqref{zloty3}.
Pairing this with $u \in U$ yields
\begin{equation*}
  \begin{split}
\Sum_{j,k}  \langle e^k e^j, u \rangle \lact  \big((e_j r)\cdot(e_k r')\big)
&=
\Sum_{j,k}
\big\langle e^j, u_{(1)} \ract \langle e^k, u_{(2)} \rangle\big\rangle \lact (e_j r)\cdot(e_k r')
  \\
  &=
  \Sum_{k}
  \big((u_{(1)} \ract \langle e^k, u_{(2)} \rangle) r\big) \cdot \big(e_k r'\big)
  \\
  &=
  \Sum_{k}
  \big(u_{(1)} r\big) \cdot \big((\langle e^k, u_{(2)} \rangle \lact e_k) r'\big)
 \\
  &=
   (u_{(1)} r ) \cdot (u_{(2)} r'),
\end{split}
\end{equation*}
which is the same as above, and where we used the general property
$$
((u \ract a)r ) \cdot (u' r')
=
((ur) \ract a )\cdot (u' r')
=
(ur) \cdot (a \lact (u'  r'))
=
(ur)\cdot ((a \lact u') r' ),
$$
for any $u, u' \in U$, $r,  r' \in R$ that originates
from Eq.~\eqref{marmor2}.
Finally, from $u1_R = \gve(u) \lact 1_R$ one directly obtains $\gl(1_R) = 1_{U_*} \otimes_A 1_R$.
As a conclusion, $R$ is a monoid in $\dualcomod$ if it is so in $\umod$, and vice versa.

Next, assume that $(R, \cdot)$ is a monoid in $\comodu$, that is,
that Eqs.~\eqref{marmor3} hold.
Then, with respect to the right $U_*$-action \eqref{onirique}, one has:
\begin{equation*}
\begin{split}
  (r \cdot r') \psi &= (r \cdot r')_{[0]} \big\langle \psi, (r \cdot r')_{[1]} \big\rangle
\\
  &= (r_{[0]} \cdot r'_{[0]}) \big\langle \psi, r'_{[1]} r_{[1]} \big\rangle
\\
  &= (r_{[0]} \cdot r'_{[0]}) \big\langle \psi^{(2)} , r'_{[1]} \bract \langle \psi^{(1)}, r_{[1]} \rangle \big\rangle
\\
&= (r_{[0]} \langle \psi^{(1)}, r_{[1]} \rangle) (r'_{[0]} \langle \psi^{(2)} , r'_{[1]} \rangle)
\\
&= (r \psi^{(1)}) \cdot (r' \psi^{(2)}),
\end{split}
  \end{equation*}
where we used the right $U$-comodule Takeuchi property \eqref{ha3} in step four. Moreover,
$$
1_R \, \psi = 1_{[0]} \langle \psi, 1_{[1]} \rangle
= 1_R \bract \langle \psi, 1_U \rangle = 1_R \bract \pl \psi.
$$
Hence, if $R$ is a monoid in $\comodu$, then it is also so in $\moddual$ (and vice versa).
Finally, we have to show that braided commutativity \eqref{marmor4} in $\ydu$ implies an analogous property in $\dualyd$.
One understands that
\begin{eqnarray*}
  r \cdot r'
& \stackrel{\scriptscriptstyle\eqref{marmor4}}{=} &
  r'_{[0]} \cdot (r'_{[1]} r)
\\
 & \stackrel{\scriptscriptstyle\eqref{ludovisi}}{=} &
r'_{[0]} \cdot \big(\langle r^{(-1)}, r'_{[1]} \rangle r^{(0)}\big)
\\
& \stackrel{\scriptscriptstyle\eqref{marmor2}}{=} &
\big(r'_{[0]} \langle r^{(-1)}, r'_{[1]} \rangle\big) \cdot r^{(0)}
\\
& \stackrel{\scriptscriptstyle\eqref{onirique}}{=} &
(r' r^{(-1)}) \cdot r^{(0)},
  \end{eqnarray*}
which is the braided commutativity for $\dualyd$, see Eq.~\eqref{kindertotenlieder}.
The reverse implications follow by analogous arguments along the same lines.
\end{proof}

\section{Action Hopf algebroids and their duals}
\label{three}

In this section, we shall discuss one of our main objects of study, that is, the construction of new bialgebroids out of old ones by combining a smash product with a centre construction, or more precisely, using monoids in the right weak centre of the monoidal category given by left (resp.\ right) modules over a left (resp.\ right) bialgebroid. We will also discuss a recognition theorem to understand when a bialgebroid bears such a structure.

\subsection{Action bialgebroids and action Hopf algebroids}
\label{leftsmash}
In this subsection, we will give a straightforward generalisation to bialgebroids and even (right) Hopf algebroids of the original result in  \cite[Thm.~4.1]{BrzMil:BBAD}, where this was performed for Hopf algebras only as a starting point. This is the content of the subsequent theorem.

\begin{theorem}
\label{tollhaus2}
Let $(U,A)$ be a left bialgebroid and $R$ a braided commutative Yetter-Drinfeld algebra in $\ydu$.
Then $R \# U$ is a left bialgebroid over $R$ and a right Hopf algebroid if $(U,A)$ already was a right Hopf algebroid.
\end{theorem}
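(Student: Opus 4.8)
The plan is to follow \cite[Thm.~4.1]{BrzMil:BBAD} line by line, replacing the Hopf-algebra computations there by their bialgebroid analogues and invoking the defining (co)module axioms of a braided commutative Yetter--Drinfeld algebra as collected in the appendices. First I would fix the underlying object: $R \# U$ is the balanced tensor product $R \otimes_A \due U \lact {}$ (with the $A$-balancing fixed in the appendices), equipped with unit $1_R \# 1_U$ and multiplication
$$
(r \# u)\,(r' \# u') \ := \ \big(r \cdot (u_{(1)}\, r')\big) \,\#\, u_{(2)}\, u',
$$
which is associative and unital precisely because $R$ is a left $U$-module algebra. Next I would exhibit the $R$-ring structure on $R \# U$ through the source map $s(r) := r \# 1_U$ and the target map $t(r) := r_{[0]} \# r_{[1]}$, the latter built from the right $U$-coaction on $R$; that $s$ is an algebra map, $t$ an anti-algebra map, and that their images commute is the first block of (routine) verifications, and it is here that the Yetter--Drinfeld compatibility and the braided commutativity of $R$ enter for the first time.

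Then I would install the $R$-coring structure with coproduct $\Delta(r\#u):=(r\#u_{(1)})\otimes_R(1_R\#u_{(2)})$ and counit $\varepsilon(r\#u):=\varepsilon_U(u)\lact r$. Well-definedness of $\Delta$ over $\otimes_R$, together with coassociativity and counitality, reduce directly to the corresponding properties of $\Delta_U$ and $\varepsilon_U$ and to the Takeuchi property of $\Delta_U$, so that no new idea is required beyond careful bookkeeping of the $A$- and $R$-actions. The one genuinely bialgebroid-specific point at this stage is that $\Delta$ takes values inside the Takeuchi $\times_R$-subproduct of $(R\#U)\otimes_R(R\#U)$; verifying this is exactly where braided commutativity of $R$ is indispensable, in complete parallel with the way it forces the analogous property for the left bialgebroid $R$ itself.

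I expect the main obstacle to be the multiplicativity of $\Delta$ (and, more easily, of $\varepsilon$): one expands both $\Delta\big((r\#u)(r'\#u')\big)$ and $\Delta(r\#u)\,\Delta(r'\#u')$ and must match the two sides. Turning one expression into the other forces a factor $u_{(2)}$ to be commuted past the factor $r'$, and it is precisely the Yetter--Drinfeld axiom relating the $U$-action and the $U$-coaction on $R$ --- together with braided commutativity, used to re-bracket the resulting products in $R$ --- that makes the two sides agree. I would carry out this one computation in bialgebroid Sweedler notation, keeping track of the $\blact$ and $\bract$ actions; this single identity is the technical heart of the first assertion, everything else being bookkeeping. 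At the end of this step one concludes that $(R\#U,R)$ is a left bialgebroid.

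For the right Hopf algebroid claim, I would recall from the appendices that a left bialgebroid is right Hopf exactly when the relevant canonical Galois map is bijective, equivalently when it admits a translation map $x\mapsto x_{\smap}\otimes x_{\smam}$ obeying the standard identities, and likewise for $R\#U$. Assuming $(U,A)$ is right Hopf with translation map $u\mapsto u_{\smap}\otimes u_{\smam}$, I would set
$$
(r\#u)_{\smap}\otimes (r\#u)_{\smam}\ :=\ (r\#u_{\smap})\otimes(1_R\#u_{\smam})
$$
and verify the defining axioms of a translation map for $R\#U$ one at a time; each collapses, after using that $R$ is a monoid in $\ydu$ (i.e.\ its module-algebra and comodule-algebra properties), to the corresponding axiom for $U$. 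This exhibits the inverse of the canonical map explicitly and completes the proof.
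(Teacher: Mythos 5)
Your construction of the left bialgebroid structure on $R\# U$ is the paper's: same smash product \eqref{orangen1}, same source and target \eqref{celivre}, same coproduct \eqref{mandarinen1}, and the verifications you outline (multiplicativity of $\gD$ via the Yetter--Drinfeld axiom, the Takeuchi/$\otimes_R$ issues via braided commutativity) are exactly those the paper performs or delegates to \cite{BrzMil:BBAD}. One slip there: the counit must be $r\otimes_A u\mapsto r\,\gve(u)$ with the \emph{right} $A$-action on $R$, not $\gve(u)\lact r$; counitality $s\big(\gve(x_{(1)})\big)x_{(2)}=x$ uses precisely that $r\,\gve(u_{(1)})\otimes_A u_{(2)}=r\otimes_A \gve(u_{(1)})\lact u_{(2)}=r\otimes_A u$ via the $A$-balancing of $R\otimes_A\due U \lact{}$, and this fails for the left action.

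The genuine gap is the right Hopf part: your candidate translation map $(r\otimes_A u_{\smap})\otimes_R(1_R\otimes_A u_{\smam})$ is incorrect; it must be twisted by the coaction on $r$, as in \eqref{erkaeltet}, namely $(r_{[0]}\otimes_A u_{\smap})\otimes_R(1_R\otimes_A u_{\smam}r_{[1]})$. The reason is that in the codomain of the Hopf--Galois map only the \emph{target} moves across $\otimes_R$, so
$1\otimes_R(r\otimes_A u)=t(r)\otimes_R(1_R\otimes_A u)=(r_{[0]}\otimes_A r_{[1]})\otimes_R(1_R\otimes_A u)$,
whereas applying the Galois map to your element yields (after using \eqref{Tch2} for $U$) the element $s(r)\otimes_R(1_R\otimes_A u)=(r\otimes_A 1_U)\otimes_R(1_R\otimes_A u)$; these differ whenever the coaction on $r$ is nontrivial, so your formula is not a preimage of $1\otimes_R(r\otimes_A u)$ and the axioms do \emph{not} collapse to those of $U$. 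For a concrete failure take $U=kG$ for a group $G$ and $R=\widetilde U^{\op}=(kG)^{\op}$ with adjoint action and coaction $x\mapsto x\otimes x$ as in Example \ref{sable}: for $x\neq e$ and $g\in G$, the elements $(x\otimes_A 1)\otimes_R(1\otimes_A g)$ and $(x\otimes_A x)\otimes_R(1\otimes_A g)=1\otimes_R(x\otimes_A g)$ lie in different summands of the decomposition of $(R\#U)_\ract\otimes_R\due{(R\#U)}\lact{}$ coming from the free right $R$-basis $\{1\otimes_A u'\}_{u'\in G}$, so they are distinct. The fix is simply to adopt the coaction-twisted formula and then check \eqref{Tch2} and \eqref{Tch3} as the paper does.
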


\begin{proof}
First, let us fix the tensor product underlying $R \# U$ as $R \, \otimes_A \due U \lact {}$. The product structure on this (seen either as a $k$-algebra or as an $\Ae$-ring) is the customary smash product, {\em i.e.},
  \begin{equation}
    \label{orangen1}
(r \otimes_A u)(r' \otimes_A u')
= r \cdot (u_{(1)} r') \otimes_A u_{(2)} u',
  \end{equation}
  which is well defined by means of Eq.~\eqref{ydforget2}, the $A$-bilinearity of the coproduct, but in particular Eq.~\eqref{marmor2} that guarantees the well-definedness over the Sweedler presentation of the coproduct in $U_\ract \otimes_A \due U \lact {}$. Next, set source and target as
\begin{equation}
  \label{celivre}
  s\colon R \to R \# U, \ r \mapsto r \otimes_A 1,
\qquad   t\colon R \to R \# U, \ r \mapsto r_{[0]} \otimes_A r_{[1]}.
\end{equation}
The proof that their images commute in $R \# U$ is analogous (and hence omitted) to that in \cite[Thm.~4.1]{BrzMil:BBAD} using, in particular, the braided commutativity \eqref{marmor4}, but note that the coaction is here, in contrast to {\em loc.~cit.}, over a bialgebroid and not merely over a bialgebra. These source and target maps, together with the multiplication \eqref{orangen1}, turn $R \# U$ into an $\Ae$-ring. Moreover,
they define the codomain of the left coproduct to be (the Takeuchi subspace of) the tensor product
$
(R \# U)_\ract \otimes_R \due {(R \# U)} \lact {}
$
in which we quotient by
\begin{equation}
  \label{djursholm}
\big((r_{[0]} \otimes_A r_{[1]})  (r' \otimes_A u')\big)
\otimes_R (r'' \otimes_A u'') - (r' \otimes_A u') \otimes_R (r \cdot r'' \otimes_A u'').
\end{equation}
We can therefore set the (left)
coproduct as
\begin{equation}
  \label{mandarinen1}
  \gD
  \colon r \otimes_A u \mapsto (r \otimes_A u_{(1)}) \otimes_R (1 \otimes_A u_{(2)}),
\end{equation}
which is well-defined over the Sweedler presentation thanks to the fact that
$$
r \otimes_A u \ract a = \big((a1_R)_{[0]} \otimes_A (a1_R)_{[1]}\big)(r \otimes_A u) =  t(a) (r \otimes_A u)
$$
as well as
$1 \otimes_A a \lact u = s(a) (1 \otimes_A u)$.
The appurtenant (left) counit simply reads
\begin{equation}
  \label{sicherheitshinweis1}
  \gve \colon R \# U  \to R, \quad r \otimes_A u \mapsto r\gve(u).
\end{equation}
It is a straightforward check (which we skip) that the so-given structure maps turn $R \# U$ into a left bialgebroid over $R$.
More interesting, if $(U,A)$ is, in addition, a right Hopf algebroid (over a left bialgebroid) in the sense of  \S\ref{sokc}, then $R \# U$ turns itself into a right Hopf algebroid as well:
the translation map which induce the inverse of the Hopf-Galois map
$$
(r \otimes_A u) \otimes_R (r' \otimes_A u')
\to
\big((r \otimes_A u)_{(1)} (r' \otimes_A u')\big) \otimes_R (r \otimes_A u)_{(2)}
$$
is given by
 \begin{equation}
   \label{erkaeltet}
\ga_r^{-1} \colon r \otimes_A u \mapsto (r_{[0]} \otimes_A u_{\smap }) \otimes_R (1 \otimes_A u_{\smam  } r_{[1]}),
\end{equation}
and even here we omit the explicit check that Eqs.~\eqref{Tch2} \& \eqref{Tch3}, which express the required invertibility, are fulfilled as these follow directly from the respective property of $U$, along with Eqs.~\eqref{orangen1}, \eqref{celivre}, as well as \eqref{djursholm}.
\end{proof}

\begin{definition}
A (left) bialgebroid of the form $(R\#U, R)$ as above, where $(U,A)$ is itself a (left) bialgebroid, will be called a {\em (left) action bialgebroid}, sometimes referred to as {\em smash product bialgebroid} or still {\em scalar extension bialgebroid}.
\end{definition}
  
\begin{example}
\label{sable}
  For a left bialgebroid $(U,A)$ such that $\due U \lact {}$ is left $A$-projective, let
  $$
  \widetilde U :=
\due U \lact \bract \otimes_\Ae A =
  \big\{ u \in U \mid a \lact u = u \bract a, \ \forall\, a \in A \big\},
$$
that is, the subset of those elements in $U$ that commute with the source map.
This subset is, in particular, an $A$-bimodule by means of the target map, which we denote, as always, by $\due {\widetilde U} \blact \ract$.
Observe that $\widetilde U$ is, in general, not a coring anymore, but at least a right $U$-comodule via the map
\begin{equation}
  \label{bonjour}
\rho_{\widetilde{U}} \colon \widetilde U \to \widetilde U_\ract \otimes_A \due U \lact {}, \quad u \mapsto u_{[0]} \otimes_A u_{[1]} :=
 u_{(1)} \otimes_A u_{(2)}
\end{equation}
  induced by the coproduct on $U$. Indeed, if $u \in \widetilde U$, then
  $$
a \lact u_{(1)} \otimes_A u_{(2)} =  \rho_{\widetilde{U}}(a \lact u) = \rho_{\widetilde{U}}(u \bract a) = u_{(1)} \bract a \otimes_A u_{(2)},
$$
and therefore $u_{(1)} \otimes_A u_{(2)} \in \widetilde U \otimes_A U$ by the assumed $A$-projectivity,
while the necessary coassociativity and counitality relations simply follow by those of the coproduct.

Let now $U$ be a right Hopf algebroid (over a left bialgebroid). In contrast to $U$ itself, $\widetilde U$ admits a left adjoint action
\begin{equation}
  \label{bonsoir}
\mpact \colon U \otimes \widetilde U \to \widetilde U, \quad (w, u) \mapsto w_{\smap} u w_{\smam}
\end{equation}
of $U$ on $\widetilde U$. This map is well-defined by mere construction, it is a left action on $\widetilde U$ by using Eqs.~\eqref{Tch1} \& \eqref{Tch6}, and it descends to an $\Ae$-balanced action $\mpact \colon \due U \blact \bract \otimes_\Ae \due {\widetilde U} \blact \ract  \to \widetilde U$ de\-noted by the same symbol,
as seen from \eqref{Tch9}. It is a straightforward check now (which we omit) that $\widetilde U$ is a left-right YD module over $U$. Observe that if $U$ is a Hopf algebra (with invertible antipode), by $\widetilde U = U$, this reduces to one of the customary ways of seeing a Hopf algebra as a YD module over itself.

The braided commutative monoid to consider in $\ydu$ in the sense of \S\ref{trecce1} now rather is $\widetilde U^\op$, equipped with its opposite multiplication as is immediately clear from the requirement \eqref{marmor3} of being a monoid in $\comoduop$. One has, for $w \in U$ and $ u, u' \in \widetilde U$
\begin{eqnarray*}
  (w_{(1)} u) \cdot (w_{(2)} u')
  & \stackrel{\scriptscriptstyle\eqref{bonsoir}}{=} &
(w_{(1)\smap} u w_{(1)\smam}) \cdot (w_{(2)\smap} u' w_{(2)\smam})
\\
&  = &
w_{(2)\smap} u' w_{(2)\smam} w_{(1)\smap} u w_{(1)\smam}
\\
& \stackrel{\scriptscriptstyle\eqref{Tch4}}{=} &
w_{\smap (2)\smap} u' w_{\smap (2)\smam} w_{\smap (1)} u w_{\smam}
\\
& \stackrel{\scriptscriptstyle\eqref{Tch3}}{=} &
w_{\smap} u' u w_{\smam}
\\
& \stackrel{\scriptscriptstyle\eqref{bonsoir}}{=} &
w \mpact (u \cdot u'),
\end{eqnarray*}
which proves, for example, \eqref{marmor1}, and likewise
$$
u_{[0]} \cdot (u_{[1]}u') \stackrel{\scriptscriptstyle\eqref{bonsoir}}{=}  u_{[0]} \cdot (u_{[1]\smap}u' u_{[1]\smam}) \stackrel{\scriptscriptstyle\eqref{bonjour}}{=}  u_{(2)\smap}u' u_{(2)\smam}u_{(1)} \stackrel{\scriptscriptstyle\eqref{Tch3}}{=} uu' = u' \cdot u,
$$
which proves \eqref{marmor4}. As an outcome, $\widetilde U^\op \# U$ is a left bialgebroid, and since $U$ was already a right Hopf algebroid, $\widetilde U^\op \# U$ is so as well by means of the translation map in \eqref{erkaeltet}. 
If one considers a bialgebroid, in a certain sense, as a cogroupoid kind of object, the construction $\widetilde U^\op \# U$ might be termed {\em Weyl Hopf algebroid}, in analogy to the classical Weyl groupoid $G \rtimes G$  (as termed so in \cite[\S6.4]{HanMaj:BCHA}),
which will be further discussed in Example~\ref{pomeriggio}.
In case the bialgebroid $(U,A) = (H,k)$ is actually a Hopf algebra over a field $k$, then the construction of $\widetilde U^\op \# U$ reduces to the one of $H^\op \# H$ presented in {\em op.~cit.}

\end{example}

\begin{example}
  \label{toujours}
 Let $(U,A)$ be a right Hopf algebroid (over a left bialgebroid) such that $\due U \lact {}$ is finitely generated projective over $A$. Then it is known, see \S\ref{naemlichhier}, that the left dual $(U_*, A)$ is a left Hopf algebroid (over a right $A$-bialgebroid), with explicit expression of the translation map  given by 
\begin{equation*}
    \psi^\smam \otimes_A \psi^\smap := \textstyle\sum_j e^j \otimes_A (e_j \manda \psi)
\end{equation*}
for $\psi \in U_*$,
where
$
(u \manda \psi)(u')
:= \gve(u_\smap \bract \phi(u_\smam u'))
$
for $u ,  u' \in U$, and
$\{e_j\}_{1 \leq j \leq n} \in U$, $\{ e^j\}_{1 \leq j \leq n} \in U_*$ is a dual basis. This allows to mimic the construction from the preceding Example \ref{sable} in a right bialgebroid manner for $U_*$, that is, one defines
  $$
  \widetilde U_* :=
{}_\lact {(U_*)} {}_\bract \otimes_\Ae A =
  \big\{ \psi \in U_* \mid a \lact \psi = \psi \bract a, \ \forall\, a \in A \big\},
$$
  on which there is an adjoint {\em right} $U_*$-action
  \begin{equation*}
  \reflectbox{\mpact}
    \colon \widetilde U_* \otimes U_* \to \widetilde U_*, \quad (\tilde \psi, \psi) \mapsto \psi^{\smam} \tilde \psi  \psi^{\smap}.
\end{equation*}
  It is then again (and omitted again) a direct check that this right action along with the left coaction induced from the coproduct $\gD_r$ on $U_*$ as in Eq.~\eqref{weiszjanich}, analogous to the considerations above, endow $\widetilde U_*$ with the structure of a right-left YD module, and, more interesting, even a braided commutative monoid in $\dualyd$.

  Combining the second equivalence in part (iv) of Lemma \ref{sofocle2} with Lemma \ref{rosamunde}, we obtain that $\widetilde U_*$ is a braided commutative monoid in $\ydu$, and therefore $\widetilde U_* \# U$ is a right Hopf algebroid (over a left bialgebroid), which might once more be termed {\em Weyl Hopf algebroid}.
Again,  in case the bialgebroid $(U,A) = (H,k)$ is actually a Hopf algebra over a field $k$, then the construction of $\widetilde U_* \# U$ reduces to the one of $(H^\op)^* \# H$ presented in \cite[\S6.4]{HanMaj:BCHA}.
\end{example}

\subsection{Recognising action bialgebroids}
A natural question is when or if a given bialgebroid can be singled out as being isomorphic to an action bialgebroid. This is the content of the following theorem which permits to distinguish these:

\reversemarginpar

\begin{theorem}
  \label{wieheisztdashier}
 A left bialgebroid
 $(U, B, s, t, \gD, \gve)$
  is an action left bi\-al\-ge\-broid
  if and only if: 
  \begin{enumerate}
\compactlist{50}
  \item
there is a left bialgebroid
  $(W,A)$
  along with a morphism $(F, f) \colon   (W, A) \to (U,B)$ of left bialgebroids;
\item
  there is a left $B$-linear right $W$-coaction $\rho_U \colon U \to U_{\ract f} \,  \otimes_A W $ on $U$
  in the sense of $\rho_U(b \lact u) = b \lact \rho_U(u)$, satisfying, in addition,
    \begin{equation}
      \label{atlas}
(U \otimes_B F) \circ \rho_U = \gD,
    \end{equation}
    where the map $U \otimes_B F$ is the composition $U \otimes_A W \longhookrightarrow
    U \otimes_B (B_f \otimes_A W) \xrightarrow{U \otimes_B \xi} U \otimes_B U$, with $\xi$ as in \eqref{dimanche} right below.
      \item
        the map $F \colon W \to U$ is 
        colinear.
\end{enumerate}
If all of the above holds, 
   one has an isomorphism
 \begin{equation}
 \label{dimanche}
    \xi \colon
     B_f \otimes_A W \to
    U, \quad
    b \otimes_A w \mapsto b \lact F(w)
    \end{equation}
of left $B$-modules, with inverse
$u \mapsto \gve(u_{[0]}) \otimes_A u_{[1]}$. Finally,
$B$ becomes a braided commutative monoid in $\altyd$ so that $\xi$
induces an isomorphism
$ (B \# W, B) \to
(U,B)
$
of left bialgebroids.
   \end{theorem}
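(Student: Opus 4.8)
\emph{Plan.} Since the statement is an equivalence together with a structural claim, I would prove it in two halves, the nontrivial implication being split into a module‑theoretic step and a bialgebroid‑theoretic step.

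\emph{Necessity.} Here I would start from $(U,B)=(R\#W,R)$ with $(W,A)$ a left bialgebroid and $R$ a braided commutative Yetter--Drinfeld algebra in $\altyd$, let $f\colon A\to R$ be the $A$-ring structure map of $R$, and set $F\colon W\to R\#W,\ w\mapsto 1_R\otimes_A w$. Comparing $(F,f)$ with the smash‑product maps \eqref{orangen1}, \eqref{mandarinen1} and \eqref{sicherheitshinweis1} --- invoking $u\,1_R=\gve(u)\lact 1_R$ for the multiplicativity of $F$ and the left counit of $W$ for the $\otimes_A$-balancings --- shows $(F,f)$ is a morphism of left bialgebroids, giving (i). For (ii) one takes the coaction $\rho_U(r\otimes_A w):=(r\otimes_A w_{(1)})\otimes_A w_{(2)}$ obtained by applying $\gD_W$ to the $W$-leg; it is left $R$-linear, and \eqref{atlas} holds by the very shape of $\gD$ in \eqref{mandarinen1}. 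Finally (iii) is immediate, since $\rho_U(F(w))=F(w_{(1)})\otimes_A w_{(2)}$.

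\emph{Sufficiency, Step 1.} Assume (i)--(iii) and write $\rho_U(u)=u_{[0]}\otimes_A u_{[1]}$. I would first show that $\xi$ from \eqref{dimanche} is an isomorphism of left $B$-modules with inverse $\gf\colon u\mapsto\gve(u_{[0]})\otimes_A u_{[1]}$. Well‑definedness and $B$-linearity of $\xi$ follow from $F\circ s_W=s_U\circ f$ and multiplicativity of $F$; well‑definedness of $\gf$ from left $B$-linearity of $\rho_U$. The identity $\xi\circ\gf=\id_U$ comes out of \eqref{atlas}: by construction the map $U\otimes_B F$ sends $\rho_U(u)$ to $u_{[0]}\otimes_B F(u_{[1]})$, so \eqref{atlas} gives $u_{[0]}\otimes_B F(u_{[1]})=u_{(1)}\otimes_B u_{(2)}$; applying $\gve_U$ to the first leg and using the left counit relation $s_U(\gve_U(u_{(1)}))u_{(2)}=u$ yields $s_U\big(\gve_U(u_{[0]})\big)F(u_{[1]})=u$, which is exactly $\xi(\gf(u))=u$. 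For $\gf\circ\xi=\id$, combining left $B$-linearity of $\rho_U$, colinearity (iii) of $F$, the relation $\gve_U\circ F=f\circ\gve_W$ coming from (i) and the left counit of $W$ gives $\gf\big(s_U(b)F(w)\big)=\gve_U\big(s_U(b)F(w_{(1)})\big)\otimes_A w_{(2)}=b\,f(\gve_W(w_{(1)}))\otimes_A w_{(2)}=b\otimes_A w$.

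\emph{Sufficiency, Step 2.} Next I would transport the left bialgebroid structure of $U$ along $\xi$ and identify the outcome with a smash product. I would first record two consequences of the hypotheses: $\rho_U(s_U(b))=s_U(b)\otimes_A 1_W$ (from $\gD(s_U(b))=s_U(b)\otimes_B 1_U$ through \eqref{atlas} and the injectivity of $U\otimes_B F$), and $\rho_U$ is multiplicative (as $\gD=(U\otimes_B F)\circ\rho_U$ is multiplicative and $U\otimes_B F$ is an injective algebra map for the natural product on $U_{\ract f}\otimes_A W$). Introducing a left $W$-action $w\lact b:=\gve_U\big(F(w)s_U(b)\big)$ on $B$ and a right $W$-coaction $\rho_B:=\gf\circ t_U\colon B\to B_f\otimes_A W$, these facts yield the key identity $F(w)s_U(b)=s_U(w_{(1)}\lact b)\,F(w_{(2)})$. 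Feeding it, together with $\gD_U(s_U(b))=s_U(b)\otimes_B 1_U$ and $\gD_U(F(w))=F(w_{(1)})\otimes_B F(w_{(2)})$, into the transport along $\xi$ shows the transported product, source, target, coproduct and counit to be precisely \eqref{orangen1}, \eqref{celivre}, \eqref{mandarinen1} and \eqref{sicherheitshinweis1}. It then remains to check that $B$, with $\lact$, $\rho_B$, its own multiplication and $1_B$, is a braided commutative monoid in $\altyd$: the action/coaction axioms and the module- and comodule-algebra conditions \eqref{marmor1}, \eqref{marmor3} follow from multiplicativity of $F$ and $\rho_U$ together with the counit identities of $U$; the Yetter--Drinfeld compatibility \eqref{yd2} rests crucially on colinearity (iii); and the braided commutativity \eqref{marmor4} is, via the key identity, nothing but the bialgebroid relation $s_U(b)t_U(b')=t_U(b')s_U(b)$ in $U$. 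With $B$ now a braided commutative YD algebra, $B\#W$ is a left bialgebroid by Theorem \ref{tollhaus2}, and $\xi$ is by the preceding computation an isomorphism $(B\#W,B)\to(U,B)$ of left bialgebroids.

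\emph{Main obstacle.} I expect the two harder points to be (a) deriving the multiplicativity of $\rho_U$ --- equivalently, that $U\otimes_B F$ respects the natural product on $U_{\ract f}\otimes_A W$ --- from (i)--(iii) alone, and (b) the bookkeeping, trivial in the Hopf-algebra setting of \cite{BrzMil:BBAD} but not here, needed to verify all Yetter--Drinfeld-algebra axioms for $B$ over the \emph{bialgebroid} $W$, keeping track of the various Takeuchi-type balancing constraints; it is exactly in the verification of \eqref{yd2} that the colinearity hypothesis (iii) is genuinely used.
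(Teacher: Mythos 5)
Your proposal is correct and follows essentially the same route as the paper's proof: necessity via $F(w)=1_R\otimes_A w$ and $\rho_U=R\otimes_A\gD_W$, and sufficiency by showing $\xi$ is a $B$-module isomorphism, defining the $W$-action on $B$ through the counit and the coaction as $\xi^{-1}\circ t$, deriving braided commutativity from $s(b)t(b')=t(b')s(b)$, and transporting the smash-product structure along $\xi$. The only small divergence is organisational: the paper never isolates multiplicativity of $\rho_U$ as a lemma (it works directly with $\rho_B=\xi^{-1}\circ t$ and the injectivity of $\xi\otimes_B F$), and the colinearity hypothesis (iii) is actually consumed in proving $\xi^{-1}\circ\xi=\id$ and the coassociativity of $\rho_B$ rather than in the verification of \eqref{yd2}, which in the paper rests on $F$ being a coring morphism together with counitality.
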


\begin{proof}
  For ease of notation, the structure maps $s, t, \gD, \gve$ of $U$ will not have a subscript, while those of $W$ do. Moreover, $U_{\ract f}$ indicates $U$ seen as a right $A$-module by means of $t \circ f = F \circ t_W$ multiplied from the left.
%
It is easy to see that $\xi$ from \eqref{dimanche} is an isomorphism (of left  $B$-modules and actually of left bialgebroids as shown later on):
  to start with, $\xi \circ \xi^{-1} = \id_U$ is an elementary check using \eqref{atlas}, while using the assumed colinearity
  $
   F(w)_{[0]} \otimes_A F(w)_{[1]} =
      F(w_{(1)}) \otimes_A w_{(2)}
  $
      of $F$ along with the left $B$-linearity
      $
      (b \lact u)_{[0]} \otimes_A (b \lact u)_{[1]}
      =
      b \lact u_{[0]} \otimes_A u_{[1]}$ of~$\rho_U$, 
we compute
\begin{equation*}
  \begin{split}
  (\xi^{-1}
  \circ \xi)(b \otimes_A w)
    &
=
\gve\pig(\big(b \lact F(w)\big)_{[0]}\pig) \otimes_A \big(b \lact F(w)\big)_{[1]}
\\
&
= \gve\big(b \lact  F(w)_{[0]}\big) \otimes_A F(w)_{[1]}
\\
&
= b\,\gve\big(F(w_{(1)})\big) \otimes_A w_{(2)}
= bf\big(\gve_W(w_{(1)})\big) \otimes_A w_{(2)} = b \otimes_A w,
  \end{split}
  \end{equation*}
on $B_f \otimes \due W \lact {}$,
using left $B$-linearity of $\gve$ in the third step and $\gve \circ F = f \circ \gve_W$ in the fourth.
Next, the base algebra $B$ of $U$ becomes a left $W$-module by means of the counit of $U$,
  \begin{equation}
    \label{krabbensalat1}
  wb := \gve \big(F(w) \bract b\big) \in B.
  \end{equation}
Combining this with the
 forgetful functor $\wmod \to \amoda$, using $F \circ s_W = s \circ f$ and
$F \circ t_W = t \circ f$, one obtains:
  \begin{equation}
    \label{krabbensalat2}
    a \lact b \ract a' = \gve \big(F(s_W(a)t_W(a')) \bract b\big)
    = f(a)bf(a').
  \end{equation}
  So equipped, $B$ evidently becomes a left $W$-module algebra as follows directly from $F$ being a coring morphism.
  Moreover,
  the injectivity of the source map $s \colon B \to U$ induces an isomorphism
   \begin{equation}
    \label{krabbensalat3}
   B \simeq s(B) \simeq U ^{\hskip 1pt {\rm co} {\hskip 1pt} W} := \{ u \in U \mid \rho_U(u) = u \otimes_A 1_W \},
 \end{equation}
where the second isomorphism holds thanks to the property $F(1_W) = 1_U$ along with Eq.~\eqref{atlas}.

If we now equip $B_f \otimes_A  \due W \lact {}$ with a smash prod\-uct structure as in \eqref{orangen1} with respect to the left $W$-action \eqref{krabbensalat1}, then $\xi$ can be promoted to an isomorphism
\begin{equation*}
\xi \colon B \# W  \to U, \quad b \otimes_A w \mapsto b \lact F(w)
  \end{equation*}
of algebras as easily seen: indeed, on one hand, one has, using \eqref{orangen1},
$$
\xi\big((b \otimes_A w)(b' \otimes_A w') \big) = \xi\big(b \gve(F(w_{(1)}) \bract b') \otimes_A w_{(2)} w' \big) = \big(b \gve(F(w_{(1)}) \bract b')\big) \lact F(w_{(2)} w'),
$$
while
\begin{equation*}
  \begin{split}
\xi(b \otimes_A w)\xi(b' \otimes_A w') = s(b)F(w)s(b')F(w')
&= s(b)s\gve\big(F(w)_{(1)} \bract b' \big) F(w)_{(2)} F(w')
\\
&
= \big(b \gve(F(w_{(1)}) \bract b' )\big) \lact F(w_{(2)} w'),
  \end{split}
  \end{equation*}
using the fact that $F$ is a morphism of rings and corings.
Next, we want to prove that on $B$ (seen as right $A$-module via $f$ as above) a right $W$-coaction can be defined.
To this end, let us consider the element
\begin{equation}
  \label{raumplan}
 b_{[0]} \otimes_A b_{[1]} :=  \xi^{-1}(t(b))
\end{equation}
that is, $t(b) = b_{[0]} \lact F(b_{[1]})$, and show in the following that
$$
\rho_B \colon B \to B_f \otimes_A \due W \lact {}, \quad b \mapsto b_{[0]} \otimes_A b_{[1]}
$$
is a right $W$-coaction on $B$, indeed. Let us first prove that the left $A$-action defined below on $B$, and which will play the r\^ole of the induced action on $B$ as a right comodule, coincides with the action via $f$:
\begin{equation}
  \label{krabbensalat4}
  \begin{split}
ab &:= b_{[0]} f\big(\gve_W(b_{[1]} \bract a)\big)
= b_{[0]} \gve\big(F(b_{[1]} \bract a)\big)
= b_{[0]} \gve\big(F(b_{[1]}) \bract f(a)\big)
\\
&=
 \gve\big(b_{[0]} \lact F(b_{[1]}) \bract f(a) \big)
=
\gve\big(t(b) s(f(a))\big)
\\
&=
f(a) b.
  \end{split}
\end{equation}
With this, it is a straightforward check that $\rho_B$ is $A$-bilinear:
\begin{equation*}
  \begin{split}
\rho_B(aba') &= \rho_B\big(f(a)bf(a')\big)
=  \xi^{-1}\big(t(f(a)bf(a'))\big)
\\
&= \xi^{-1}\big(F (t_W(a'))\big) \, \xi^{-1}(b) \, \xi^{-1}\big(F (t_W(a))\big)
\\
&= (1 \otimes_A t_W(a'))( b_{[0]} \otimes_A b_{[1]}) (1 \otimes_A t_W(a))
\\
&= b_{[0]} \otimes_A t_W(a') b_{[1]}t_W(a)
  \end{split}
  \end{equation*}
since $\xi$ is a ring morphism,
using the multiplication \eqref{orangen1} in $B \# W$, and which expresses the $A$-bilinearity of the right coaction, see Eq.~\eqref{ha3}.
From this it also follows that
$$
b_{[0]} \otimes_A b_{[1]}s_W(a) =  (b_{[0]} \otimes_A b_{[1]}) ( 1 \otimes_A s_W(a)) = f(a) b_{[0]} \otimes_A b_{[1]}
$$
by applying the isomorphism $\xi$ to it and using the fact that source and target in $U$ commute; this expresses the Takeuchi property $\rho_B(b) \in B_f \times_A \due W \lact {}$ from \eqref{ha3} again.
As for coassociativity of $\rho_B$, applying the coproduct in $U$ to both sides of the equation $t(b) = b_{[0]} \lact F(b_{[1]})$ yields, by standard left bialgebroid identities, on its left hand side
$$
1_U \otimes_B t(b) = 1_U \otimes_B b_{[0]} \lact F(b_{[1]}) = t(b_{[0]}) \otimes_B F(b_{[1]}) = b_{[0][0]} \lact F(b_{[0][1]}) \otimes_B F(b_{[1]}),
$$
and on its right hand side
$$
b_{[0]} \lact F(b_{[1]})_{(1)} \otimes_B F(b_{[1]})_{(2)} =  b_{[0]} \lact F(b_{[1](1)}) \otimes_B F(b_{[1](2)}),
$$
by the fact that $F$ is, in particular, a morphism of corings. Therefore, 
$$
b_{[0][0]} \lact F(b_{[0][1]}) \otimes_B F(b_{[1]}) = b_{[0]} \lact F(b_{[1](1)}) \otimes_B F(b_{[1](2)}).
$$
Hence, if $\xi \otimes_B F$ were injective, the coassociativity 
$
b_{[0][0]} \otimes_A b_{[0][1]} \otimes_A b_{[1]} = b_{[0]} \otimes_A b_{[1](1)} \otimes_A b_{[1](2)}
$
followed: indeed, the map $\xi \otimes_B F$ is defined via the commutative diagram
$$
\begin{tikzcd}
  (B \otimes_A W) \otimes_A W \arrow[rr, "\xi \otimes_B F"] \arrow[d, "\simeq"] && U \otimes_B U \arrow[d, equal, double]
  \\
  (B \otimes_A W) \otimes_B (B \otimes_A W) \arrow[rr, "\xi \otimes_B \xi"] && U \otimes_B U,
\end{tikzcd}
$$
where the right $B$-action on $B \otimes_A W$ is given by left multiplication with $\xi^{-1}\big(t(b)\big)$,
one understands that, as above, $\xi \otimes_B F$ essentially amounts to the map $\xi \otimes_B \xi$, which is injective.

As a next step, counitality, that is,
$
b_{[0]} f (\gve_W(b_{[1]})) = b,
$
then follows immediately from
\begin{equation*}
  \begin{split}
t\big(b_{[0]} f (\gve_W(b_{[1]}))\big)
&= t\big(f (\gve_W(b_{[1]}))\big) t(b_{[0]})
\\
&=
 b_{[0][0]} \lact F(b_{[0][1]}) \ract f \big(\gve_W(b_{[1]})\big)
\\
&=
 b_{[0]} \lact  F\big(b_{[1](1)} \ract \gve_W(b_{[1](2)})\big)
\\
&=
b_{[0]} \lact F(b_{[1]})
\\
&= t(b),
  \end{split}
  \end{equation*}
by the fact that source and target commute and then using the injectivity of~$t$. Next, let us show that $B$ is braided commutative in the sense of Eq.~\eqref{marmor4} with respect to the braiding induced by the right $W$-coaction \eqref{raumplan}. Using $s(b')t(b) = t(b)s(b')$ again,
we see that
$$
\xi^{-1}\big(s(b')t(b)\big) = \xi^{-1}\big(s(b')s(b_{[0]})F(b_{[1]})\big)
= \xi^{-1}\big(s(b' b_{[0]})F(b_{[1]}) \big) = b' b_{[0]} \otimes_A b_{[1]},
$$
while
$$
\xi^{-1}\big(t(b)s(b')\big) = \xi^{-1}\big(s(b_{[0]}) F(b_{[1]}) s(b')\big)
= (b_{[0]} \# b_{[1]})(b' \otimes_A 1_W) = b_{[0]} (b_{[1]}b') \otimes_A b_{[2]}.
$$
By applying $B \otimes_A \gve$ on both equations, one concludes, by using the middle of the base algebra action properties in \eqref{marmor2} for module algebras (as mentioned above), the braided commutativity $b' b = b_{[0]} (b_{[1]}b')$. From this, braided commutativity follows in a similar way that $B$ is a $W^\op$-comodule
algebra: since the target map is an anti ring morphism, one has
\begin{equation*}
  \begin{split}
    (bb')_{[0]} \otimes_A (bb')_{[1]}
    &=
    \xi^{-1}\big(t(bb')\big)
= \xi^{-1}(t(b'))\xi^{-1}(t(b))
\\
& =
(b'_{[0]} \otimes_A b'_{[1]})(b_{[0]} \otimes_A b_{[1]})
= b'_{[0]} (b'_{[1]}b_{[0]}) \otimes_A b'_{[2]} b_{[1]}
= b_{[0]} b'_{[0]} \otimes_A b'_{[1]} b_{[1]}.
\end{split}
\end{equation*}
Finally, let us check that $B$ is a left-right YD module over $W$. That the two $A$-bimodule structures on $B$ induced by the left $W$-action resp.\ right $W$-coaction coincide follows directly from Eqs.~\eqref{krabbensalat2} \& \eqref{krabbensalat4}. As for the compatibility \eqref{yd2} between action and coaction, let us apply $\xi$ to the left hand side of Eq.~\eqref{yd2}:
\begin{equation*}
  \begin{split}
\xi\big(w_{(1)} b_{[0]} \otimes_A w_{(2)} b_{[1]}\big)
&= (w_{(1)} b_{[0]}) \lact F\big(w_{(2)} b_{[1]} \big)
\\
&= \gve \big(F(w_{(1)}) \bract b_{[0]} \big) \lact  F(w_{(2)})F(b_{[1]})
\\
&= \gve \pig(\big(F(w) \bract b_{[0]}\big)_{(1)}\pig) \lact \big( F(w) \bract b_{[0]}\big)_{(2)} F(b_{[1]})
\\
&= F(w) s(b_{[0]}) F(b_{[1]})
\\
&= F(w) t(b).
  \end{split}
\end{equation*}
Doing the same with respect to the right hand side in \eqref{yd2}, we obtain:
\begin{equation*}
  \begin{split}
    \xi\big((w_{(2)} b)_{[0]} \otimes_A (w_{(2)} b)_{[1]}  w_{(1)} \big)
    &=  (w_{(2)} b)_{[0]} \lact F\big((w_{(2)} b)_{[1]}  w_{(1)} \big)
    \\
    &    =  (w_{(2)} b)_{[0]} \lact F\big((w_{(2)} b)_{[1]}\big) F( w_{(1)})
    \\
    &    =  t(w_{(2)} b) F( w_{(1)})
\\
    &    =  t\big(\gve(F(w)_{(2)} \bract b)\big) F( w)_{(1)}
\\
    &    =  F( w) t(b)
  \end{split}
\end{equation*}
by the Takeuchi property and counitality in the last step; this is the same as above, as desired.
To sum up, $B$ is a braided commutative monoid in $\ydw$, and therefore $B \# W$ becomes a left bialgebroid according to the construction \S\ref{leftsmash}. It remains to show that $\xi$ is an isomorphism of left bialgebroids. That it is so for the ring structure was shown above, and that it also intertwines all remaining bialgebroid structure maps is a straightforward check, the details of which we omit.

The opposite implication, that is, starting from $U = B \# W$, for $B$ a braided commutative monoid in $\ydw$, is rather obvious: define $\rho_U := B \otimes_A \gD_W$ and $F \colon w \mapsto 1_B \otimes_A w$. Then, using Eq.~\eqref{orangen1}, the relation \eqref{atlas} holds, $F$ is clearly right $W$-colinear, and $U$ is a right $W$-comodule algebra by coassociativity.
\end{proof}

\subsection{Right action bialgebroids}
\label{rightsmash}
In order to deal with duals later on, we need a construction analogous to the above left action bialgebroids in \S\ref{leftsmash} starting from {\em right} bialgebroids as the dual of a left bialgebroid is a right bialgebroid, {\em cf.}\ \S\ref{duals}.
Hence, let $(V,B, s, t, \gD, \pl)$ be a right bialgebroid, and $(R, \cdot)$ a braided commutative monoid in $\vyd$ as detailed in \S\ref{trecce2}.
%
To avoid order flipping operations, let us take $V \otimes_B R := V_\bract \otimes_B R$ as the underlying tensor product for the right action bialgebroid $V \# R$ in this order that we are going to construct by  equipping it with the following operations: its multiplication (as a $k$-algebra) is given by
\begin{equation}
  \label{narrateur}
(v \otimes_B r)(\tilde v \otimes_B r') := v\tilde v^{(1)} \otimes_B (r   \tilde v^{(2)} ) \cdot r',
\end{equation}
where the right $V$-action on $R$ is simply denoted by juxtaposition $R \otimes V \to R, \ (r,v) \mapsto rv$, and where superscript Sweedler notation refers to {\em right} bialgebroid structure maps as mentioned on page \pageref{rightSw}.
We turn
$V \# R$ into a $\Be$-ring by setting
\begin{equation}
  \label{beethovensonata23}
s\colon R \to V \# R, \ r \mapsto 1 \otimes_B r, \qquad t \colon R \to V \# R, \ r \mapsto r^{(-1)} \otimes_B r^{(0)},
\end{equation}
for the source and target map,
which also allow to set up the codomain of the coproduct; which, in turn, is given by
\begin{equation}
  \begin{split}
  \label{mandarinen2}
  V \otimes_B R &\to (V \otimes_B R)_\bract \otimes_R \due {(V \otimes_B R)} \blact {}, \quad
v \otimes_B r \mapsto  (v^{(1)} \otimes_B 1) \otimes_R (v^{(2)} \otimes_B r),
\end{split}
  \end{equation}
whereas for the right counit one takes:
\begin{equation*}
  V \otimes_B R \to R, \quad v \otimes_B r \mapsto \pl(v)r.
  \end{equation*}
It is a direct check (which we omit) analogous to the left case that the resulting object becomes a right bialgebroid over $R$. Again, this right bialgebroid can be promoted to a left Hopf algebroid (over a right bialgebroid this time) by means of the following translation map:
\begin{equation}
  \label{nasseslaub}
  \begin{array}{rcl}
    \gb_\ell^{-1} \colon V \otimes_B R &\to& (V \otimes_B R)_\bract \otimes_R  \due {(V \otimes_B R)} \lact {} ,
    \\[1mm]
v \otimes_B r &\mapsto& (r^{(-1)} v^\smam \otimes_B 1) \otimes_R (v^\smap \otimes_B r^{(0)}).
  \end{array}
  \end{equation}
Reassuming these considerations, we can state:

\begin{prop}
\label{tollhaus3}
Let $(V,B)$ be a right bialgebroid and $R$ a braided commutative monoid in $\vyd$.
Then $V \# R$ is a right bialgebroid over $R$ and it is, in addition, a left Hopf algebroid  if $(V, B)$ was a left Hopf algebroid.
\end{prop}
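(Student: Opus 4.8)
The plan is to mirror, step by step, the proof of Theorem \ref{tollhaus2}, exploiting the evident symmetry between left and right bialgebroids and between left and right modules/comodules. Concretely, I would set up everything on the tensor product $V_\bract \otimes_B R$, with multiplication \eqref{narrateur}, source and target \eqref{beethovensonata23}, coproduct \eqref{mandarinen2}, counit as stated, and translation map \eqref{nasseslaub}, and then verify the right bialgebroid axioms (as in Appendix \ref{A}, mirrored) together with the left Hopf-Galois invertibility.

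First I would check that the multiplication \eqref{narrateur} is well defined over the relevant tensor product $V_\bract \otimes_B R$: this uses the $\Be$-linearity of $\gD$ on $V$ and, crucially, the module-algebra compatibility of the $R$-action in $\vyd$ (the right-handed analogue of Eq.~\eqref{marmor2}), exactly as the well-definedness of \eqref{orangen1} rested on \eqref{marmor2}. Next I would show that $s$ and $t$ from \eqref{beethovensonata23} have commuting images in $V \# R$; this is the point where one invokes the braided commutativity of $R$ in $\vyd$ (the analogue of \eqref{marmor4}), precisely paralleling the corresponding step in \cite[Thm.~4.1]{BrzMil:BBAD} and in the proof of Theorem \ref{tollhaus2}. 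Having the $\Be$-ring structure, I would confirm that the coproduct \eqref{mandarinen2} is well defined over the Takeuchi subspace of $(V \otimes_B R)_\bract \otimes_R \due {(V \otimes_B R)} \blact {}$, using the identities $v \otimes_B r \blact r' = t(r')(v \otimes_B r)$ and $s(r)(v \otimes_B 1) = v \otimes_B r$ analogous to those appearing after \eqref{mandarinen1}; coassociativity, counitality, the Takeuchi condition on $\gD$, and the fact that $\gD$ and $\pl$ are algebra maps then all reduce, mutatis mutandis, to the corresponding properties of $V$ together with \eqref{narrateur}--\eqref{beethovensonata23}. These are all direct mirror-image computations, so I would state that they proceed as in the left-handed case and omit the details, exactly as Theorem \ref{tollhaus2} does.

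For the left Hopf algebroid claim, assuming $(V,B)$ is a left Hopf algebroid, I would verify that $\gb_\ell^{-1}$ from \eqref{nasseslaub} is a two-sided inverse to the left Hopf-Galois map $v \otimes_B r \otimes_R \tilde v \otimes_B r' \mapsto (v \otimes_B r)_{(1)} \otimes_R (v \otimes_B r)_{(2)}(\tilde v \otimes_B r')$, i.e.\ that it satisfies the right-handed analogues of Eqs.~\eqref{Tch2} \& \eqref{Tch3}. This follows by substituting \eqref{narrateur}, \eqref{beethovensonata23}, \eqref{mandarinen2} into the defining identities and reducing them to the translation-map identities for $V$ itself (the mirror of the final paragraph in the proof of Theorem \ref{tollhaus2}), again a routine check that I would omit. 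The only genuinely delicate point — and the one I would flag as the main obstacle — is bookkeeping: one must be consistent about the chosen order $V_\bract \otimes_B R$ (rather than $R \otimes V$), about superscript Sweedler notation referring to right bialgebroid structure maps, and about which of the two $A$-module structures on $R$ and on $V$ is being used at each juncture; once the conventions of Appendices \ref{A}--\ref{C} are fixed, every verification is the formal dual of one already carried out for $R \# U$, so no new idea is required and the proposition follows.

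Therefore, the proof amounts to: (1) well-definedness of \eqref{narrateur}; (2) commutation of $\im s$ and $\im t$ via braided commutativity in $\vyd$; (3) the right bialgebroid axioms for \eqref{mandarinen2} and the counit, obtained by mirroring the left case; and (4) invertibility of the Hopf-Galois map via \eqref{nasseslaub}, reduced to the left Hopf algebroid structure of $V$. All four steps are the left-right mirror images of corresponding arguments already given for Theorem \ref{tollhaus2}, whence we may safely leave the explicit computations to the reader.
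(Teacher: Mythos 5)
Your proposal matches the paper's treatment essentially verbatim: the paper likewise just records the structure maps \eqref{narrateur}, \eqref{beethovensonata23}, \eqref{mandarinen2}, the counit, and the translation map \eqref{nasseslaub}, and then declares the verification of the right bialgebroid axioms and the left Hopf--Galois invertibility to be a direct check mirroring Theorem \ref{tollhaus2}. Your step-by-step breakdown (well-definedness via the analogue of \eqref{marmor2}, commuting source/target via braided commutativity, Takeuchi/coassociativity/counitality, and reduction of the Hopf condition to that of $V$) is exactly the intended argument, so the proof is correct and follows the same route.
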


The typical situation to which we would like to apply this result appears in the context of duals of left (action) bialgebroids as discussed next.

\subsection{Duals of action bialgebroids}

Let $(U,A)$ be a left bialgebroid, let $\due U \lact {}$ be finitely gen\-er\-at\-ed projective as a left $A$-module, and let $\{e_j\}_{1 \leq j \leq n}  \in
U, \ \{e^j\}_{1 \leq j \leq n}  \in {U_*}$ be a dual basis for the left dual $U_* = \Hom_A(\due U \lact {}, A)$, see \S\ref{duals} in the appendix for detailed information.
To deal with the dual of $R \# U$ as a left $R$-bialgebroid, consider, to begin with,
the map
\begin{equation}
  \label{rhodia}
\eta \colon \Hom_R(R \otimes_A U, R) \to {U_*}_\bract \otimes_A R, \quad f \mapsto \Sum_j e^j \otimes_A f(1_R \otimes_A e_j),
\end{equation}
which is easily seen to be an
isomorphism of $k$-modules. As a matter of fact, it is much more:


\begin{theorem}
\label{vaexholm}
  Let $(U,A)$ be a left bialgebroid such that $\due U \lact {}$ is finitely generated $A$-projective. Then
  the map $\eta$ induces an isomorphism of right bialgebroids
  \begin{equation}
    \label{knaeckebrot}
  \Hom_R(R \# U, R) \simeq {U_*}
  \# R,
\end{equation}
which, if $U$ is a right Hopf algebroid (over a left $A$-bialgebroid),
is therefore an isomorphism of left Hopf algebroids (over right $R$-bialgebroids).
  \end{theorem}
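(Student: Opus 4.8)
The plan is to transport the whole action-bialgebroid structure on $R \# U$ through the linear-duality isomorphism $\eta$ of \eqref{rhodia} and recognise the target as the right action bialgebroid ${U_*}\#R$ built in \S\ref{rightsmash}. First, one should observe that by Lemma \ref{rosamunde} the braided commutative monoid $R$ in $\ydu$ is also a braided commutative monoid in $\dualyd$ with respect to the left dual $U_*$ (which is a \emph{right} bialgebroid over $A$), so that the object ${U_*}\#R$ of Proposition \ref{tollhaus3} makes sense. Thus the statement of the theorem is: $\eta$ is an isomorphism of right bialgebroids ${U_*}\#R \to \Hom_R(R\#U,R)$ (or its inverse, in the direction written), and this isomorphism is compatible with the left Hopf structures coming from Theorem \ref{tollhaus2} and Proposition \ref{tollhaus3} whenever $U$ is right Hopf.

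The key steps, carried out in this order, are: (1) \emph{Ring structure.} Show $\eta$ (or $\eta^{-1}$) intertwines the convolution-type multiplication on $\Hom_R(R\#U,R)$ with the right smash product \eqref{narrateur} on ${U_*}\#R$. The convolution product on the dual of a left bialgebroid pairs the left coproduct \eqref{mandarinen1} against the product; unwinding $\gD$ and using the dual-basis identities together with the explicit $U_*$-action \eqref{ludovisi} and the opposite product in $U_*$ (as already exploited in the proof of Lemma \ref{rosamunde}) should reproduce exactly \eqref{narrateur}. (2) \emph{Source, target, counit.} Identify the source/target of the dual of $R\#U$ with \eqref{beethovensonata23}: here the source of the dual comes from the counit \eqref{sicherheitshinweis1} of $R\#U$ evaluated appropriately, and the target is governed by the coaction, i.e.\ by the map $r \mapsto r^{(-1)}\otimes_A r^{(0)}$ used to define $t$ in \eqref{beethovensonata23}; the right counit of the dual is read off from the unit of $R\#U$ as in \eqref{celivre}. (3) \emph{Coproduct.} Match the coproduct on $\Hom_R(R\#U,R)$ — which is dual to the product \eqref{orangen1} on $R\#U$ — with the right coproduct \eqref{mandarinen2} on ${U_*}\#R$; concretely, dualising $(r\otimes_A u)(r'\otimes_A u') = r\cdot(u_{(1)}r')\otimes_A u_{(2)}u'$ should split into a part acting through the coproduct $\gD_r$ of $U_*$ and a part acting through the coaction of $U_*$ on $R$, which is precisely the shape of \eqref{mandarinen2}. (4) \emph{Hopf structure.} Assuming $U$ right Hopf, so that $R\#U$ is right Hopf by Theorem \ref{tollhaus2} with translation map \eqref{erkaeltet} and $U_*$ is left Hopf by Example \ref{toujours}, so that ${U_*}\#R$ is left Hopf by Proposition \ref{tollhaus3} with translation map \eqref{nasseslaub}, check that $\eta$ carries the inverse Hopf--Galois map of one to that of the other; this is a formal consequence of steps (1)--(3) once one notes that the translation map \eqref{erkaeltet} for $R\#U$ is built from $u \mapsto u_\smap \otimes u_\smam$ on $U$, which dualises to the translation map $\psi \mapsto \psi^\smam\otimes_A\psi^\smap$ on $U_*$ recalled in Example \ref{toujours}, matching \eqref{nasseslaub}.

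The main obstacle I expect is step (1) together with the bookkeeping of which $A$-module structure sits on which tensor leg: the convolution product on the linear dual of a \emph{left} bialgebroid is the one that lands on a \emph{right} bialgebroid, and its explicit formula mixes the left and right $A$-actions on $U_*$ with the coaction on $R$; keeping the Takeuchi subspaces $\bract\otimes_A$ vs.\ $\otimes_A\lact$ consistent through $\eta$, and verifying that \eqref{narrateur} is literally what comes out (rather than an isomorphic-but-different product), is where the real work lies. Once the product is matched correctly, steps (2)--(4) are essentially forced: source, target and counit are determined by unit and counit of $R\#U$, the coproduct is dual to the product and hence pinned down, and the Hopf structure is inherited. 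I would therefore write the product computation in full detail — invoking the dual-basis relation \eqref{schizzaestrappa1}, the pairing compatibilities \eqref{duedelue}, \eqref{trattovideo}, \eqref{LDMon}, and the monoid relations \eqref{marmor1}, \eqref{marmor2} on $R$ exactly as in the proof of Lemma \ref{rosamunde} — and merely indicate the remaining verifications as routine, in the spirit of the omitted checks in Theorems \ref{tollhaus2} and \ref{wieheisztdashier}.
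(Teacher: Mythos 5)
Your plan follows essentially the same route as the paper's proof: both sides are recognised as right $R$-bialgebroids via Lemma \ref{rosamunde} and Proposition \ref{tollhaus3}, and $\eta$ is then checked to be an $R$-bimodule map (source/target), a ring map against \eqref{LDMon} versus \eqref{narrateur}, and a coring map against \eqref{mandarinen2}, with the Hopf compatibility following formally since the Hopf--Galois map is built from product and coproduct alone (the paper additionally records a redundant direct verification via the translation maps, exactly as you suggest). Your identification of the main difficulty — the bookkeeping of the $A$-module structures and Takeuchi subspaces in the product computation — is also where the bulk of the paper's computation lies.
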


An analogous statement can be made with respect to the right dual $U^*$, which we are not going to spell out.

\begin{proof}
  By the general theory, see Appendix \ref{duals},
the left hand side in \eqref{knaeckebrot} is a right $R$-bialgebroid since it is the dual over the base ring of a left bialgebroid;
that the right hand side is also a right $R$-bialgebroid follows from Proposition \ref{tollhaus3}
 along with Lemma~\ref{rosamunde}.

 Hence, let us show that the map $\eta$ from \eqref{rhodia} is
 not only a map of $R$-bimodules between $\due {\Hom_R(R \# U, R)} \blact \bract$ and $\due {(U_* \# R)} \blact \bract$ with respect to the notation from \eqref{soedermalm}  but also
 both a ring map as well as a coring map.
 As for being a map of $R$-bimodules, consider first:
  \begin{eqnarray*}
\eta(f \bract r)
    &
{\overset{\scriptscriptstyle{
\eqref{rhodia}
}}{=}}
&
\Sum_j  e^j \otimes_A  (f \bract r)(1_R \otimes_A e_j)
\\
&
{\overset{\scriptscriptstyle{
\eqref{duedelue}
}}{=}}
&
\Sum_j  e^j \otimes_A  f(1_R \otimes_A e_j) \cdot r
\\
&
{\overset{\scriptscriptstyle{
\eqref{narrateur}
}}{=}}
&
\big(\Sum_j  e^j \otimes_A  f(1_R \otimes_A e_j)\big)(1_{U_*} \otimes_A  r)
\\
&
{\overset{\scriptscriptstyle{
\eqref{beethovensonata23}
}}{=}}
&
\eta(f) \bract r,
  \end{eqnarray*}
for $r \in R$,  while
  \begin{eqnarray*}
\eta(r \blact f)
    &
{\overset{\scriptscriptstyle{
\eqref{rhodia}
}}{=}}
&
\Sum_j  e^j \otimes_A  (r \blact f)(1_R \otimes_A e_j)
\\
&
{\overset{\scriptscriptstyle{
\eqref{duedelue}
}}{=}}
&
\Sum_j  e^j \otimes_A  f\big((1_R \otimes_A e_j) \bract r\big)
\\
&
{\overset{\scriptscriptstyle{
\eqref{celivre}
}}{=}}
&
\Sum_j  e^j \otimes_A  f\big((1_R \otimes_A e_j)(r \otimes_A 1_U) \big)
  \end{eqnarray*}
  \begin{eqnarray*}
&
{\overset{\scriptscriptstyle{
\eqref{orangen1}
}}{=}}
&
\Sum_j  e^j \otimes_A  f({e_j}_{(1)} r \otimes_A {e_j}_{(2)} )
\\
&
{\overset{\scriptscriptstyle{
\eqref{ludovisi}
}}{=}}
&
\Sum_j  e^j \otimes_A  f\big(\langle r^{(-1)}, {e_j}_{(1)}\rangle\,r^{(0)} \otimes_A {e_j}_{(2)} \big)
\\
&
{\overset{\scriptscriptstyle{
\eqref{schizzaestrappa1}
}}{=}}
&
\Sum_{j,k}  e^j \otimes_A  f\big(\langle r^{(-1)}, {e_j}_{(1)}\rangle\,r^{(0)} \langle e^k, {e_j}_{(2)} \rangle \otimes_A e_k \big)
\\
&
{\overset{\scriptscriptstyle{
\eqref{ha2}
}}{=}}
&
\Sum_{j,k}  e^j \otimes_A  f\big(\big\langle \langle e^k, {e_j}_{(2)} \rangle \lact r^{(-1)}, {e_j}_{(1)} \big\rangle\,r^{(0)}  \otimes_A e_k \big)
\\
&
{\overset{\scriptscriptstyle{
\eqref{duedelue}
}}{=}}
&
\Sum_{j,k}  e^j \otimes_A  f\big(\big\langle  r^{(-1)}, {e_j}_{(1)} \ract \langle e^k, {e_j}_{(2)} \rangle \big\rangle\,r^{(0)}  \otimes_A e_k \big)
\\
&
{\overset{\scriptscriptstyle{
\eqref{LDMon}
}}{=}}
&
\Sum_{j,k}  e^j \otimes_A  f\big(\langle  e^k r^{(-1)}, e_j \rangle\,r^{(0)}  \otimes_A e_k \big)
\\
&
{\overset{\scriptscriptstyle{
}}{=}}
&
\Sum_{j,k}  e^j \bract \langle  e^k r^{(-1)}, e_j \rangle \otimes_A  f(r^{(0)}  \otimes_A e_k)
\\
&
{\overset{\scriptscriptstyle{
\eqref{schizzaestrappa2}
}}{=}}
&
\Sum_k  e^k r^{(-1)} \otimes_A  f(r^{(0)}  \otimes_A e_k)
\\
&
{\overset{\scriptscriptstyle{
}}{=}}
&
\Sum_k  e^k r^{(-1)} \otimes_A  r^{(0)} \cdot f(1_R  \otimes_A e_k)
\\
&
{\overset{\scriptscriptstyle{
\eqref{kindertotenlieder}
}}{=}}
&
\Sum_k  e^k r^{(-2)} \otimes_A  \big( f(1_R  \otimes_A e_k)\,r^{(-1)} \big) \cdot r^{(0)}
\\
&
{\overset{\scriptscriptstyle{
\eqref{narrateur}
}}{=}}
&
\big( \Sum_k  e^k \otimes_A  f(1_R  \otimes_A e_k)\big)(r^{(-1)} \otimes_A r^{(0)})
\\
&
{\overset{\scriptscriptstyle{
\eqref{beethovensonata23}
}}{=}}
&
r \blact \eta(f).
  \end{eqnarray*}
Therefore, $\eta$ is a morphism of $R$-bimodules, as desired.
As for $\eta$ being a ring map, let $f, g \in \Hom_R(R \# U, R) = \Hom_R(R \otimes_A U, R)$.
 The right bialgebroid multiplication of the left dual of the left bialgebroid $R \# U$ is given in \eqref{LDMon}, and hence for $u \in U$, one has
  \begin{eqnarray*}
\eta(fg)(u)
    &
{\overset{\scriptscriptstyle{
\eqref{rhodia}
}}{=}}
&
\Sum_j  \langle e^j, u \rangle \, \big( (fg)(1_R \otimes_A e_j) \big)
\\
&
{\overset{\scriptscriptstyle{
\eqref{LDMon}
}}{=}}
&
\Sum_j  \langle e^j, u \rangle \, g\big((1_R \otimes_A e_j)_{(1)} \ract f\big((1_R \otimes_A e_j)_{(2)}\big) \big)
\\
&
{\overset{\scriptscriptstyle{
\eqref{mandarinen1}
}}{=}}
&
\Sum_j  \langle e^j, u \rangle \, g\pig(t \big(f(1_R \otimes_A {e_j}_{(2)})\big) (1_R \otimes_A {e_j}_{(1)}) \pig)
\\
&
{\overset{\scriptscriptstyle{
\eqref{celivre}, \eqref{orangen1}
}}{=}}
&
\Sum_j  \langle e^j, u \rangle \, g\big(f(1_R \otimes_A {e_j}_{(2)})_{[0]} \otimes_A f(1_R \otimes_A {e_j}_{(2)})_{[1]} {e_j}_{(1)} \big)
\\
&
{\overset{\scriptscriptstyle{
}}{=}}
&
\Sum_j  g \pig(\langle e^j, u \rangle \, f(1_R \otimes_A {e_j}_{(2)})_{[0]} \otimes_A f(1_R \otimes_A {e_j}_{(2)})_{[1]} {e_j}_{(1)} \pig)
\\
&
{\overset{\scriptscriptstyle{
\eqref{ha3}
}}{=}}
&
\Sum_j  g \pig(f(1_R \otimes_A {e_j}_{(2)})_{[0]} \otimes_A f(1_R \otimes_A {e_j}_{(2)})_{[1]} s(\langle e^j, u \rangle) {e_j}_{(1)} \pig)
\\
&
{\overset{\scriptscriptstyle{
\eqref{schizzaestrappa1}
}}{=}}
&
g \pig(f(1_R \otimes_A u_{(2)})_{[0]} \otimes_A f(1_R \otimes_A u_{(2)})_{[1]} u_{(1)} \pig).
  \end{eqnarray*}
On the other hand,
  \begin{eqnarray*}
&&
    \big(\eta(f)\eta(g)\big)(u)
    \\
    &
{\overset{\scriptscriptstyle{
\eqref{rhodia}
}}{=}}
&
\Sum_{j,k}  \pig(\big( e^j \otimes_A f(1_R \otimes_A e_j) \big)\big( e^k \otimes_A g(1_R \otimes_A e_k) \big)   \pig)(u)
\\
  &
{\overset{\scriptscriptstyle{
\eqref{narrateur}
}}{=}}
&
\Sum_{j,k}
\langle e^j {e^k}^{(1)}, u \rangle \, \pig(\big(f(1_R \otimes_A e_j){e^k}^{(2)}\big) \cdot g(1_R \otimes_A e_k) \pig)
\\
  &
{\overset{\scriptscriptstyle{
\eqref{onirique}
}}{=}}
&
\Sum_{j,k}
\langle e^j {e^k}^{(1)}, u \rangle \,
\pig(
\big(f(1_R \otimes_A e_j)_{[0]} \langle {e^k}^{(2)}, f(1_R \otimes_A e_j)_{[1]} \rangle \big)
\cdot g(1_R \otimes_A e_k)
\pig)
\\
  &
{\overset{\scriptscriptstyle{
\eqref{LDMon}
}}{=}}
&
\Sum_{j,k}
\big\langle {e^k}^{(1)}, u_{(1)} \ract \langle e^j, u_{(2)} \rangle \big\rangle \,
\pig(
\big(f(1_R \otimes_A e_j)_{[0]} \, \langle {e^k}^{(2)}, f(1_R \otimes_A e_j)_{[1]} \rangle \big)
\cdot g(1_R \otimes_A e_k)
\pig)
\\
&
{\overset{\scriptscriptstyle{
\eqref{marmor2}
}}{=}}
&
\Sum_{j,k}
\pig(\big\langle {e^k}^{(1)}, u_{(1)} \ract \langle e^j, u_{(2)} \rangle \big\rangle \,
\big(f(1_R \otimes_A e_j)_{[0]} \, \langle {e^k}^{(2)}, f(1_R \otimes_A e_j)_{[1]} \rangle \big) \pig)
\cdot g(1_R \otimes_A e_k)
\\
  &
{\overset{\scriptscriptstyle{
\eqref{ha3}
}}{=}}
&
\Sum_{j,k}  \pig(f(1_R \otimes_A e_j)_{[0]} \, \Big\langle {e^k}^{(2)}, f(1_R \otimes_A e_j)_{[1]}
\bract \big\langle {e^k}^{(1)}, u_{(1)} \ract \langle e^j, u_{(2)} \rangle \big\rangle
\Big\rangle \pig)
\cdot g(1_R \otimes_A e_k)
\\
&
{\overset{\scriptscriptstyle{
\eqref{trattovideo}
}}{=}}
&
\Sum_{j,k}  \big(f(1_R \otimes_A e_j)_{[0]} \, \big\langle e^k, f(1_R \otimes_A e_j)_{[1]}
t(\langle e^j, u_{(2)} \rangle) u_{(1)}  \big\rangle
\big)
\cdot g(1_R \otimes_A e_k)
\\
&
{\overset{\scriptscriptstyle{
\eqref{ha3}
}}{=}}
&
\Sum_{j,k}
\pig(\big(\langle e^j, u_{(2)} \rangle \, f(1_R \otimes_A e_j)\big)_{[0]} \, \big\langle e^k, \big(\langle e^j, u_{(2)} \rangle \, f(1_R \otimes_A e_j)\big)_{[1]} u_{(1)}  \big\rangle
\pig)
\cdot g(1_R \otimes_A e_k)
\\
&
{\overset{\scriptscriptstyle{
}}{=}}
&
\Sum_{k}
\big(f(1_R \otimes_A u_{(2)})_{[0]} \, \langle e^k, f(1_R \otimes_A u_{(2)})_{[1]} u_{(1)} \rangle
\big)
\cdot g(1_R \otimes_A e_k)
\\
&
{\overset{\scriptscriptstyle{
\eqref{marmor2}  
}}{=}}
&
f(1_R \otimes_A u_{(2)})_{[0]} \cdot g(1_R \otimes_A f(1_R \otimes_A u_{(2)})_{[1]})
\\
&
{\overset{\scriptscriptstyle{
}}{=}}
&
g\pig(f(1_R \otimes_A u_{(2)})_{[0]} \otimes_A f(1_R \otimes_A u_{(2)})_{[1]}\pig).
 \end{eqnarray*}
  Hence, $\eta(fg) = \eta(f)\eta(g)$, and so $\eta$ is an isomorphism of $\Ree$-rings. As for being an $R$-coring morphism, consider first, similar to \eqref{zuschlag}, the following isomorphism:
\begin{small}
  \begin{equation}
\label{zuschlag1}
\begin{array}{rcl}
  ({U_*} \otimes_A R)_\bract \otimes_R  \due {({{U_*}} \otimes_A R)} \blact {}
  &\to&
  \Hom_A(U_\bract \otimes_A \due U \lact {}, R),
  \\
  (\psi \otimes_A r)  \otimes_R  (\tilde \psi \otimes_A r')
  &\mapsto&
\big  \{ u  \otimes_A u'
  \mapsto
\big\langle \tilde \psi r^{(-1)}, u \bract \langle \psi , u' \rangle \big\rangle (r^{(0)} \cdot r')
 \big \},
\end{array}
  \end{equation}
  \end{small}based on the observation that
  \begin{equation*}
    \begin{split}
  (\psi \otimes_A r)  \otimes_R  (\tilde \psi \otimes_A r')
  &=
      (\psi \otimes_A 1_R)  \otimes_R  \big(\tilde \psi r^{(-2)} \otimes_A (r' r^{(-1)}) \cdot r^{(0)} \big)
      \\
    &=  (\psi \otimes_A 1_R)  \otimes_R  (\tilde \psi r^{(-1)} \otimes_A r^{(0)} \cdot r'),
    \end{split}
    \end{equation*}
  using the tensor product over $R$
  by means of the target map \eqref{beethovensonata23}, the product \eqref{narrateur}, as well as the braided commutativity \eqref{kindertotenlieder}. This isomorphism induces a nondegenerate pairing, and we can write
    \begin{eqnarray}
\nonumber
    &&  \big\langle (\psi \otimes_A r) \otimes_R (\tilde \psi \otimes_A r'),
u \otimes_A u'
      \big\rangle
      \\
      \nonumber
      &
{\overset{\scriptscriptstyle{
\eqref{zuschlag1}, \eqref{marmor2}
}}{=}}
&
\big\langle \tilde \psi r^{(-1)},
u \bract \langle \psi , u' \rangle
\big\rangle
\, r^{(0)} \cdot r'
\\
\nonumber
&
{\overset{\scriptscriptstyle{
\eqref{LDMon}
}}{=}}
&
\big\langle r^{(-1)},
(u_{(1)} \bract \langle \psi , u' \rangle) \ract
\langle \tilde \psi, u_{(2)} \rangle
\big\rangle
\, r^{(0)} \cdot r'
\\
\nonumber
&
{\overset{\scriptscriptstyle{
\eqref{duedelue}
}}{=}}
&
\big\langle \langle \tilde \psi, u_{(2)} \rangle \lact r^{(-1)},
u_{(1)} \bract \langle \psi , u' \rangle
\big\rangle
\, r^{(0)} \cdot r'
\\
\nonumber
&
{\overset{\scriptscriptstyle{
\eqref{ha2}
}}{=}}
&
\big\langle r^{(-1)},
u_{(1)} \bract \langle \psi , u' \rangle
\big\rangle
\, r^{(0)}\cdot  \langle \tilde \psi, u_{(2)} \rangle \, r'
\\
\label{casinoaurora}
&
{\overset{\scriptscriptstyle{
\eqref{ludovisi}
}}{=}}
&
\big((u_{(1)} \bract \langle \psi , u' \rangle)
r\big) \cdot \langle \tilde \psi, u_{(2)} \rangle \, r',
 \end{eqnarray}
where in the last equation an element of $U$ acts on $r$ from the left. Observe that
    $(ar) \cdot r' = (ar) \cdot r'$ and $(ra) \cdot r' = r \cdot (ar')$ as in \eqref{marmor2}, which permits to avoid writing a couple of parentheses both above and below.
If $\gD_r$, by abuse of notation, denotes both the (right bialgebroid) coproduct in $\Hom_R(R \# U, R)$ as well as in $U_* \# R$, we compute for $f \in \Hom_R(R \# U, R) = \Hom_R(R \otimes_A U, R)$: 
    \begin{eqnarray*}
&&
      \big\langle (\eta \otimes_R \eta) \circ \gD_r f, u \otimes_A u' \big\rangle
      \\
      &=&
\big\langle \eta f^{(1)} \otimes_R \eta f^{(2)}, u \otimes_A u' \big\rangle
      \\
      &
{\overset{\scriptscriptstyle{
\eqref{rhodia}
}}{=}}
&
\Sum_{i, j}
\big\langle
\big( e^j \otimes_A f^{(1)}(1_R \otimes_A e_j)\big)
\otimes_R
\big( e^i \otimes_A f^{(2)}(1_R \otimes_A e_i)\big),
u \otimes_A u'
\big\rangle
\\
&
{\overset{\scriptscriptstyle{
\eqref{casinoaurora}
}}{=}}
&
\Sum_{i, j}
\big(( u_{(1)} \bract \langle e^j, u' \rangle )f^{(1)}(1_R \otimes_A e_j)\big)
\cdot
\langle e^i, u_{(2)} \rangle f^{(2)}(1_R \otimes_A e_i)
\\
&
{\overset{\scriptscriptstyle{
}}{=}}
&
\big(u_{(1)} f^{(1)}(1_R \otimes_A u')\big)
\cdot f^{(2)}(1_R \otimes_A u_{(2)})
\\
&
{\overset{\scriptscriptstyle{
}}{=}}
&
f^{(2)}\pig(\big(u_{(1)} f^{(1)}(1_R \otimes_A u')\big) \otimes_A u_{(2)}\pig),
    \end{eqnarray*}
    where we used $A$-linearity and $R$-linearity in the last two steps.
     On the other hand,
    \begin{eqnarray*}
      \big\langle \gD_r (\eta f), u \otimes_A u' \big\rangle
      &
{\overset{\scriptscriptstyle{
\eqref{mandarinen2}
}}{=}}
&
\Sum_{j}
\big\langle \big( {e^j}^{(1)} \otimes_A 1_R  \big)
\otimes_R \big( {e^j}^{(2)} \otimes_A f(1_R \otimes_A e_j)\big) , u \otimes_A u' \big\rangle
 \\
      &
{\overset{\scriptscriptstyle{
\eqref{casinoaurora}
}}{=}}
&
\Sum_{j}
\big( (u_{(1)} \bract \langle {e^j}^{(1)}, u' \rangle) 1_R \big)
\cdot
\langle {e^j}^{(2)}, u_{(2)} \rangle \, f(1_R \otimes_A e_j)
 \\
      &
{\overset{\scriptscriptstyle{
\eqref{marmor1}
}}{=}}
&
\Sum_{j}
\gve\big(u_{(1)} \bract \langle {e^j}^{(1)}, u' \rangle\big) \langle {e^j}^{(2)}, u_{(2)} \rangle \, f(1_R \otimes_A e_j)
 \\
      &
{\overset{\scriptscriptstyle{
}}{=}}
&
\Sum_{j}
 f\pig (1_R \otimes_A \big\langle {e^j}^{(2)}, u \bract \langle {e^j}^{(1)}, u' \rangle \big\rangle \, e_j\pig)
 \\
      &
{\overset{\scriptscriptstyle{
\eqref{trattovideo}
}}{=}}
&
\Sum_{j}
 f\big (1_R \otimes_A \langle {e^j}, u u' \rangle \, e_j\big)
 \\
      &
{\overset{\scriptscriptstyle{
}}{=}}
&
 f (1_R \otimes_A u u')
 \\
      &
{\overset{\scriptscriptstyle{
\eqref{orangen1}
}}{=}}
&
 f \big((1_R \otimes_A u)((1_R \otimes_A u')\big)
 \\
      &
{\overset{\scriptscriptstyle{
\eqref{trattovideo}
}}{=}}
&
f^{(2)} \big(
(1_R \otimes_A u) \bract f^{(1)}(1 \otimes_A u')
\big)
\\
      &
{\overset{\scriptscriptstyle{
\eqref{celivre}
}}{=}}
&
f^{(2)} \pig(
\big(1_R \otimes_A u\big)\big(f^{(1)}(1 \otimes_A u') \otimes_A 1_U \big)
\pig)
\\
&
{\overset{\scriptscriptstyle{
\eqref{orangen1}
}}{=}}
&
f^{(2)}\pig(\big(u_{(1)} f^{(1)}(1_R \otimes_A u')\big) \otimes_A u_{(2)}\pig),
    \end{eqnarray*}
where we used left $A$-linearity twice in step four, and the coproduct on the dual of the left bialgebroid $R \# U$ analogous to \eqref{trattovideo} in step eight.
 This is now the same as above and therefore $\eta$ is a map of $R$-corings, as claimed.

 Finally, to show that $\eta$ is also an isomorphism of left Hopf algebroids (over underlying right $R$-bialgebroids), this follows automatically by the very definition of a left Hopf algebroid: according to Eq.~\eqref{ibianchi2}, this amounts to the invertibility of the map $\gb_\ell$, which only uses the product and the coproduct, which already have been shown to be compatible with $\eta$. Nevertheless, although redundant, a direct proof can be given which we consider to be instructive as it uses the explicit expression of the translation map \eqref{staendigallergie1} for the dual, that is, the fact that the dual of a right Hopf algebroid (over a left bialgebroid) is a left Hopf algebroid (over a right bialgebroid).
 In this spirit, consider, analogous to \eqref{zuschlag1},
 the isomorphism:
\begin{small}
 \begin{equation}
\label{zuschlag2}
\begin{array}{rcl}
  ({U_*} \otimes_A R)_\bract \otimes_R  \due {({{U_*}}\otimes_A R)} \lact {}
  &\to&
  \Hom_A(U_\ract \otimes_A \due U \lact {}, R),
  \\[1mm]
  (\psi \otimes_A r)  \otimes_R  (\tilde \psi \otimes_A r')
  &\mapsto&
 \big \{ u  \otimes_A v
  \mapsto
  \big\langle
  \tilde \psi^{(1)}, u \ract \langle \psi , u' \rangle
  \big\rangle
\big(  (r \tilde \psi^{(2)} )\cdot r' \big)
\big \},
\end{array}
  \end{equation}
  \end{small}based here on the observation that
  \begin{equation*}
    \begin{split}
(\psi \otimes_A r)  \otimes_R  (\tilde \psi \otimes_A r')
 &=
 (\psi \otimes_A 1_R)  \otimes_R  \big((1 \otimes_A r)(\tilde \psi \otimes_A r') \big)
  \\
 &=  (\psi \otimes_A 1_R)  \otimes_R  \big(\tilde \psi^{(1)} \otimes_A (r \tilde \psi^{(2)}) \cdot r'\big),
    \end{split}
    \end{equation*}
 using the tensor product over $R$
 by means of the source map \eqref{beethovensonata23} and the product \eqref{narrateur}.
  This isomorphism induces again a nondegenerate pairing, and this time we can write
    \begin{eqnarray}
      \nonumber
      &&
      \big\langle (\psi \otimes_A r) \otimes_R (\tilde \psi \otimes_A r'),
u \otimes_A u'
      \big\rangle
      \\
      \nonumber
      &
{\overset{\scriptscriptstyle{
\eqref{zuschlag2}, \eqref{marmor2}
}}{=}}
&
 \big\langle
  \tilde \psi^{(1)}, u \ract \langle \psi , u' \rangle
  \big\rangle
(r \tilde \psi^{(2)} )\cdot r'
\\
\nonumber
 &
 {\overset{\scriptscriptstyle{
\eqref{onirique}
}}{=}}
 &
\big\langle
  \tilde \psi^{(1)}, u  \ract \langle \psi , u' \rangle
  \big\rangle
\big( r_{[0]} \big\langle \tilde \psi^{(2)} , r_{[1]} \big\rangle \big)\cdot r'
\\
\nonumber
 &
 {\overset{\scriptscriptstyle{
\eqref{marmor2},
       \eqref{ha3}
}}{=}}
 &
\pig( r_{[0]} \big\langle \tilde \psi^{(2)} , r_{[1]} \bract \big\langle
  \tilde \psi^{(1)}, u  \ract \langle \psi , u' \rangle
  \big\rangle \big\rangle \pig)\cdot r'
\\
\nonumber
 &
 {\overset{\scriptscriptstyle{
\eqref{LDMon}
}}{=}}
 &
\pig( r_{[0]} \big\langle \tilde \psi, r_{[1]} (u  \ract \langle \psi , u' \rangle)
  \big\rangle \pig)\cdot r'
  \\
  \label{casinoaurora2}
   &
 {\overset{\scriptscriptstyle{
\eqref{marmor2}
}}{=}}
 &
\pig( r_{[0]} \big\langle \tilde \psi, r_{[1]} u  \ract \langle \psi , u' \rangle
  \big\rangle \pig)\cdot r'.
\end{eqnarray}
  With this, let us compute for $f \in \Hom_R(R \# U, R) = \Hom_R(R \otimes_A U, R)$:
    \begin{eqnarray*}
&&
      \big\langle (\eta \otimes_R \eta) \circ \gb_\ell^{-1}(f), u \otimes_A u' \big\rangle
      \\
      &=&
\big\langle \eta f^\smam \otimes_R \eta f^\smap, u \otimes_A u' \big\rangle
      \\
      &
{\overset{\scriptscriptstyle{
\eqref{rhodia}
}}{=}}
&
\Sum_{i, j}
\big\langle
\big( e^j \otimes_A f^\smam(1_R \otimes_A e_j)\big)
\otimes_R
\big( e^i \otimes_A f^\smap(1_R \otimes_A e_i)\big),
u \otimes_A u'
\big\rangle
      \\
      &
{\overset{\scriptscriptstyle{
\eqref{casinoaurora2}
}}{=}}
&
\Sum_{i, j}
f^\smam(1_R \otimes_A e_j)_{[0]} \big \langle e^i, f^\smam(1_R \otimes_A e_j)_{[1]}
u \ract \langle e^j, u' \rangle
\big\rangle \cdot f^\smap(1_R \otimes_A e_i)
  \\
      &
{\overset{\scriptscriptstyle{
\eqref{ha3}, \eqref{marmor1}
}}{=}}
&
f^\smam(1_R \otimes_A u')_{[0]} \cdot f^\smap(1_R \otimes_A f^\smam(1_R \otimes_A u')_{[1]} u),
  \\
      &
{\overset{\scriptscriptstyle{
}}{=}}
&
f^\smap(f^\smam(1_R \otimes_A u')_{[0]}  \otimes_A f^\smam(1_R \otimes_A u')_{[1]} u),
  \\
      &
{\overset{\scriptscriptstyle{
\eqref{celivre}, \eqref{orangen1}
}}{=}}
&
f^\smap\big((1_R \otimes_A
u) \ract f^\smam(1_R \otimes_A u')\big),
    \end{eqnarray*}
  where we also used $R$-linearity in the last two steps. On the other hand,
  \begin{eqnarray*}
    &&
      \big\langle \gb_\ell^{-1} (\eta f), u \otimes_A u' \big\rangle
\\
      &
{\overset{\scriptscriptstyle{
\eqref{rhodia}
}}{=}}
&
\Sum_{j}
\big\langle \beta_\ell^{-1} \big( e^j \otimes_A f(1_R \otimes_A e_j)\big) , u \otimes_A u' \big\rangle
 \\
      &
{\overset{\scriptscriptstyle{
\eqref{nasseslaub}
}}{=}}
&
\Sum_{j}
\big\langle \big( f(1_R \otimes_A e_j)^{(-1)} {e^j}^\smam \otimes_A 1_R \big)
\otimes_R
\big( {e^j}^\smap \otimes_A f(1_R \otimes_A e_j)^{(0)}\big)
, u \otimes_A u' \big\rangle
 \\
      &
{\overset{\scriptscriptstyle{
\eqref{casinoaurora2}
}}{=}}
&
\Sum_{j}
\big\langle {e^j}^\smap, u \ract
\langle
f(1_R \otimes_A e_j)^{(-1)} {e^j}^\smam, u' \rangle
\big\rangle \cdot f(1_R \otimes_A e_j)^{(0)}
 \\
      &
{\overset{\scriptscriptstyle{
\eqref{LDMon}
}}{=}}
&
\Sum_{j}
\pig\langle {e^j}^\smap
, u \ract
\big\langle
{e^j}^\smam, u'_{(1)} \ract \langle
f(1_R \otimes_A e_j)^{(-1)} , u'_{(2)} \rangle \big\rangle
\pig\rangle \cdot f(1_R \otimes_A e_j)^{(0)}
 \\
      &
{\overset{\scriptscriptstyle{
\eqref{Uch1}
}}{=}}
&
\Sum_{j}
\big\langle {e^j}^\smap \bract  \langle
f(1_R \otimes_A e_j)^{(-1)} , u'_{(2)} \rangle
, u \ract
\langle
{e^j}^\smam, u'_{(1)}  \rangle
\big\rangle \cdot f(1_R \otimes_A e_j)^{(0)}
 \\
      &
{\overset{\scriptscriptstyle{
\eqref{duedelue}, \eqref{marmor2}
}}{=}}
&
\Sum_{j}
\big\langle {e^j}^\smap , u \ract
\langle
{e^j}^\smam, u'_{(1)}  \rangle
\big\rangle \cdot  \big( \langle
f(1_R \otimes_A e_j)^{(-1)} , u'_{(2)} \rangle
 f(1_R \otimes_A e_j)^{(0)} \big)
 \\
      &
{\overset{\scriptscriptstyle{
\eqref{ludovisi}
}}{=}}
&
\Sum_{j}
\big\langle {e^j}^\smap , u \ract
\langle
{e^j}^\smam, u'_{(1)}  \rangle
\big\rangle \cdot  \big( u'_{(2)}
 f(1_R \otimes_A e_j) \big)
 \\
      &
{\overset{\scriptscriptstyle{
\eqref{duedelue}
}}{=}}
&
\Sum_{j}
\big\langle
\langle
{e^j}^\smam, u'_{(1)}  \rangle \lact {e^j}^\smap , u
\big\rangle \cdot  \big( u'_{(2)}
 f(1_R \otimes_A e_j) \big)
 \\
      &
{\overset{\scriptscriptstyle{
\eqref{staendigallergie3}
}}{=}}
&
\Sum_{j}
\langle
u'_{(1)}  \manda {e^j}, u
\rangle \cdot  \big( u'_{(2)}
 f(1_R \otimes_A e_j) \big)
  \end{eqnarray*}
  \begin{eqnarray*}
      &
{\overset{\scriptscriptstyle{
\eqref{staendigallergie2}
}}{=}}
&
\Sum_{j}
\gve\big( u'_{(1)\smap}  \bract \langle {e^j}, u'_{(1)\smam} u \rangle
\big) \cdot  \big( u'_{(2)}
 f(1_R \otimes_A e_j) \big)
\\
      &
{\overset{\scriptscriptstyle{
\eqref{Tch4}
}}{=}}
&
\Sum_{j}
\gve\big( u'_{\smap(1)}  \bract \langle {e^j}, u'_{\smam} u \rangle
\big) \cdot  \big( u'_{\smap (2)}
 f(1_R \otimes_A e_j) \big)
 \\
      &
{\overset{\scriptscriptstyle{
}}{=}}
&
u'_{\smap}
 f(1_R \otimes_A u'_{\smam} u )
 \\
      &
{\overset{\scriptscriptstyle{
\eqref{orangen1}
}}{=}}
&
u'_{\smap}
 f\big((1_R \otimes_A u'_{\smam})(1_R \otimes_A u ) \big)
 \\
      &
{\overset{\scriptscriptstyle{
\eqref{sicherheitshinweis1}, \eqref{erkaeltet}
}}{=}}
&
\gve_{R \# U}\pig((1_R \otimes_A u')_{\smap}
\bract
f\big((1_R \otimes_A u')_{\smam} (1_R \otimes_A u ) \big)
 \\
      &
{\overset{\scriptscriptstyle{
\eqref{staendigallergie2}
}}{=}}
&
\big((1_R \otimes_A u') \manda f\big)(1_R \otimes_A u )
 \\
      &
{\overset{\scriptscriptstyle{
\eqref{staendigallergie3}
}}{=}}
&
\big( \langle f^\smam, 1_R \otimes_A u' \rangle
 \lact f^\smap\big) (1_R \otimes_A u)
 \\
      &
{\overset{\scriptscriptstyle{
\eqref{duedelue}
}}{=}}
&
f^\smap
 \big(  (1_R \otimes_A u) \ract f^\smam  (1_R \otimes_A u')\big),
  \end{eqnarray*}
  which is the same as above, and by which one concludes the proof that $\eta$ is also a morphism of Hopf structures in the sense explained above.
\end{proof}

\section{Quantum duality for quantum action groupoids}
\label{four}
 In this section, we are going to deal with topological left (or right) bialgebroids, with respect to some suitable adic topologies.  The main objects of interest are the so-called {\em quantum groupoids}, forming two special categories (roughly, quantisations of Lie groupoids and of Lie-Rinehart algebras) that are treated to some extent in  \cite{CheGav:DFFQG}.  In particular, these behave well with respect to linear duality (which yields antiequivalences between categories of the two kinds); also, they feature a special phenomenon, known as  {\em quantum duality principle}, which establishes equivalences between these categories via the so-called {\em Drinfeld functors}.  In the present section,
 we shall show how Drinfeld functors apply to quantum groupoids that are also action bialgebroids (by applying our previous results on linear duality), finding that Drinfeld functors commute with the action bialgebroid construction.

\subsection{Quantum groupoids and the Quantum Duality Principle.}  \label{intr-QDP}
In this subsection, we introduce the notion of {\em quantum groupoids}: these are special {\em quantum} bialgebroids, that is, (topological) bialgebroids which,
for a   Lie-Rinehart algebra $(L,A)$,
are formal deformations of the universal enveloping algebra $  V\!L  $ as a left $A$-bialgebroid or  its $A$-linear dual $ J\!L  $, the jet space, as a right $A$-bialgebroid.
Afterwards, we will recall linear duality and the quantum duality principle for these quantum groupoids; all necessary details can be found in  \cite{CheGav:DFFQG}.

 \begin{definition}
\
   \begin{enumerate}
     \compactlist{99}
     \item
       A  {\em left quantum universal enveloping bialgebroid}
       is given by a topological left bialgebroid  $( U_h, A_h, s_h, t_h, \Delta_h, \gve_h)$
over a topological  $ k[[h]] $-algebra  $ A_h $
              with respect to  the  $ h $-adic  topology
    such that:
\begin{enumerate}
  \compactlist{99}
\item
  $ A_h \simeq A[[h]] $
 as topological  $k[[h]]$-modules
  such that $A_h/hA_h \simeq A$
as  $ k $-algebras,
 for some commutative $k$-algebra  $A$;
\item
$U_h \simeq V\!L  [[h]]$
  as topological  $k[[h]] $-modules
with respect to  the  $ h $-adic  topology
  for some finitely generated
 projective Lie-Rinehart algebra $(L,A)$;
 \item
   $ \big(U_h /  h U_h, A_h/hA_h\big)  \simeq  \big(V\!L [[h]]/hV\!L [[h]], A\big)$  as left bialgebroids over $A$;
   \item
   the coproduct $\gD_h$ takes values in the completion
  \begin{equation*}
    \begin{split}
      \qquad\qquad\quad
      U_{h \ract}  \widehat{\times}_{A_h}  \due {U_h} \lact {}
:=
    \big\{ \Sum_i u_ i \otimes u'_i \in U_{h \ract} \, \widehat{\otimes}_{A_h} \, \due {U_h} \lact {} \mid \Sum_i a \blact u_i \otimes u'_i = \Sum_i u_i \otimes  u'_i \bract a \big\}
  \end{split}
  \end{equation*}
  of the Takeuchi product,
    where  $ U_{h \ract } \widehat{\otimes}_{A_h} \due {U_h} \lact {} $  denotes the  $ h $-adic  completion of the tensor product  $ U_{h \ract } \otimes_{A_h} \due {U_h} \lact {}. $
\end{enumerate}
In such a setting, we call  $ U_h $  a  {\em quantisation} or a  {\em quantum deformation}  of  $ V\!L  $,
a situation we denote by $V_hL := U_h$.
                                                          \item
   A  {\em right quantum formal series bialgebroid}  is given by a topological right bialgebroid  $( F_h, A_h, s_h, t_h, \Delta_h, \pl_h)$  over a topological  $ k[[h]] $-algebra  $ A_h $
  with respect to  the  $ h $-adic  topology
   such that:
\begin{enumerate}
  \compactlist{99}
\item
   same condition as (a) in (i)  above,  for some commutative $k$-algebra  $A$;
\item
$F_h \simeq J\!L [[h]]$
  as topological  $k[[h]] $-modules
with respect to  the  $ h $-adic  topology
  for some finitely generated
  projective Lie-Rinehart algebra $(L,A)$;
\item
  the topology of  $ F_h $  is the  $ I_h $-adic  one, where
  \begin{equation*}
  I_h := \{\,f \in F_h \mid \pl_h(f) \in h F_h\,\},
  \end{equation*}
  and  $ F_h $  is  $ I_h $-adically  complete;
\item
 $ \big(F_h /  h F_h, A_h/hA_h\big) \simeq \big(J\!L[[h]],hJ\!L[[h]]\big)$  as right bialgebroids over  $ A $;
\item
   the coproduct $\gD_h$ takes values in the completion
  \begin{equation*}
    \begin{split}
      \qquad\qquad\quad
      F_{h \bract}  \widetilde{\times}_{A_h}  \due {F_h} \blact {}
:=
    \big\{ \Sum_i f_ i \otimes f'_i \in F_{h \bract} \, \widetilde{\otimes}_{A_h} \, \due {F_h} \blact {} \mid \Sum_i a \lact f_i \otimes f'_i = \Sum_i f_i \otimes  f'_i \ract a \big\}
  \end{split}
  \end{equation*}
  of the Takeuchi product,
    where  $ F_{h \bract } \widetilde{\otimes}_{A_h} \due {F_h} \blact {} $  denotes the completion of $ F_{h \bract } \otimes_{A_h} \due {F_h} \blact {} $ with respect to the topology defined by the filtration $\big\{ \Sum_{p+q=n} I^p_h \otimes_{A_h} I^q_h \big\}$.
\end{enumerate}
In such a setting, we call  $ F_h $  a  {\em quantisation} or a  {\em quantum deformation}  of  $ J\!L $,
 a situation that we denote by $J_hL := F_h$.
   \end{enumerate}
\end{definition}

 In a parallel way, one can easily formulate the respective definitions for {\em right} quantum universal enveloping bialgebroids resp.\ {\em left} quantum formal series bialgebroid.

 Clearly, quantum bialgebroids of all four types form obvious categories when defining morphisms as is done for (topological) bialgebroids in general: for some fixed  $ k[[h]] $-algebra  $ A_h   $, these four categories
will be denoted by
$ \lqua, \rqua, \lqfa$, and $\rqfa $, respectively.
See \cite[\S4.1]{CheGav:DFFQG}  for further details.

   \begin{rem}
The definition above makes sense also if one drops the finite-projectiveness assumption; nevertheless, all results we are interested in (starting from those concerning linear duality) only work well if that assumption is included.
     \end{rem}

\begin{rem}
  \label{rmk: Poisson-from-quantum}
 It is worth stressing that if any (left or right) quantum universal enveloping bialgebroid  $ V_hL $  is given, then  $ (L,A) $  inherits from that a Lie cobracket which turns it into a Lie-Rinehart bialgebra.  Similarly, if any (right or left) quantum formal series bialgebroid  $ J_hL $  is given, then it induces on  $ (L,A) $  a Lie cobracket which makes it into a Lie-Rinehart bialgebra.  
 Note that the assumption of  $ L $  being finite projective here ensures that the $A$-linear dual  $ L^* $  is a Lie-Rinehart bialgebra as well; on the other hand, it is also necessary in order to have a well defined bialgebroid structure on  $ J\!L $, see \cite[\S4.1]{CheGav:DFFQG}  again.
\end{rem}

\subsection{Linear duality beyond projective finiteness}
\label{limits}
If  $ (U,A) $  is a left bialgebroid such that   $ \due U \lact {} $  is finitely generated  $A$-projective,  then its left dual  $  U_* = \Hom_A( \due U \lact {} ,  A)  $ is a right $A$-bialgebroid, see Appendix \ref{duals}. As follows from \eqref{schizzaestrappa2}, this is again finitely generated projective over  $ A $  for the right $ A $-module  structure ${U_*}_\bract$. Therefore, one can dualise again, and similar considerations hold for the right dual $U^*$ or when starting from a right bialgebroid. Dualising twice, the final outcome is (canonically isomorphic to) the original bialgebroid  $ U $,  for example,
$  {}^*(U_*) = U  $, and likewise for all other choices.

More interesting, the finite-projectiveness assumption can be relaxed:
if, for example,  $ \due U \lact {} $  is just a {\em direct} limit of some directed system of finitely generated projective  $ A $-modules, that is,
if it is
{\em ind}-finitely generated projective,
then $U_*$ admits a canonical structure of a right bialgebroid over  $ A  $,  given by the same construction as in the finite case, with the difference that $ U_* $  is a
{\em topological}  bialgebroid whose underlying topology is the weak one, and accordingly the coproduct now takes values in a suitable Takeuchi product of  $ U_* $  with itself. In particular, $  U_* $  is an {\em inverse} limit of finite projectives, that is, it is {\em pro}-finitely generated projective, and therefore one can consider its (left and right) {\em continuous}  dual, denoted  $  {}_\star(U_*)  $  and  $  {}^\star(U_*)  $,  where {\em continuous} refers, as hinted at, to the weak topology on  $ U_* $  and the trivial topology on  $ A  $. This is tailored in such a way that, for example,
$  {}^\star(U_*) = U  $, that is, linear dualisation composed with continuous dualisation, yields the identity functor. Similar considerations can be made with respect to the right dual $U^*$.
In this section, we will apply these ideas to left or right quantum universal bialgebroids and left or right quantum formal series bialgebroids.

Let $(L,A)$ be a Lie Rinehart algebra and let us always assume that $L$ is a finitely generated projective $A$-module.
A quantum universal enveloping bialgebroid $V_h L$, endowed with the $h$-adic topology,  is a topological bialgebroid. From the fact that  $ V\!L  $  is inv-finite projective (over  $ A  $),  one deduces that any quantisation  $ V_h L $  is inv-finite projective as well   (over  $ A_h  $).
On the other hand, let $J_h L$ be a right quantum formal series bialgebroid  with  counit denoted  by $\partial_h$, and set $I_h := \partial^{-1}_h(hA_h)$.
The weak topology on $J_h L$ is the $I_h$-adic topology.
  Similarly, since  $ J\!L $  is pro-finite projective (over~$ A  $),  any quantisation  $ J_hL $  is pro-finite projective as well (over~$ A_h  $). The same remarks hold for left quantum formal series bialgebroids.

 Denote by ${}^\star(-)$, ${}_\star(-)$, $(-)^\star$, and $(-)_\star $ the respective subspaces of  $  {}^*(-)$, ${}_*(-)$, $(-)^*$, and $(-)_* $,  consisting of maps that are continuous when $J_h L $ is endowed with the $I_{J_h L}$-adic topology and $A_h$ is endowed with the $h$-adic topology.
  Considering linear duality, we can state \cite[\S5]{CheGav:DFFQG}:

 \begin{theorem}
    Left and right duals yield well-defined  contravariant  functors
    \begin{equation*}
      \begin{array}{ll}
        ^{\phantom{*}}(-)_* \colon\, \lqua \to \rqfa, \quad\qquad & {}^\star(-)^{\phantom{\star}} \colon\, \rqfa \to \lqua,
        \\[1mm]
        ^{\phantom{*}}(-)^* \colon\, \lqua \to \rqfa, \quad\qquad & {}_\star(-)^{\phantom{\star}} \colon\, \rqfa \to \lqua,
        \\[1mm]
          {}_*(-)^{\phantom{*}} \colon\, \rqua \to \lqfa, \quad\qquad &
          ^{\phantom{\star}}(-)^\star \colon\, \lqfa \to \rqua,
        \\[1mm]
          {}^*(-)^{\phantom{*}} \colon\, \rqua \to \lqfa, \quad\qquad &
          ^{\phantom{\star}}(-)_\star \colon\, \lqfa \to \rqua,
      \end{array}
       \end{equation*}
that are  pairwise inverse to each other, which yield pairs of antiequivalences of categories.
\end{theorem}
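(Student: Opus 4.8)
The plan is to recall the argument of \cite[\S5]{CheGav:DFFQG}, of which this theorem is an instance. By the chirality‑switching behaviour of linear duality (switching left and right bialgebroids, as recalled in the Introduction) and the obvious left/right symmetry between the two kinds of quantum objects, it suffices to treat a single one of the four pairs, say $\bigl(\,{}^{\phantom{*}}(-)_*,\,{}^\star(-)\bigr)$ between $\lqua$ and $\rqfa$; the remaining three pairs are obtained verbatim by replacing the left dual with the right dual and/or passing to the right‑handed versions of the definitions. So the real content is: (a) $(-)_*$ sends objects of $\lqua$ to objects of $\rqfa$ and is contravariantly functorial and continuous; (b) ${}^\star(-)$ does the same from $\rqfa$ to $\lqua$; (c) the two are mutually inverse up to natural isomorphism.

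For (a), let $U_h\in\lqua$ be a quantisation $V_hL$ of $V\!L$ for a finitely generated projective $(L,A)$. Since $V\!L$ is a direct limit of finitely generated projective $A$‑modules (ind‑finite projective), $U_h$ is the analogous limit over $A_h$, as recalled in \S\ref{limits}; hence $U_{h*}=\Hom_{A_h}(\due{U_h}\lact{},A_h)$ carries its canonical structure of a \emph{topological} right $A_h$‑bialgebroid for the weak topology, extending Appendix~\ref{duals} to the ind‑finite case. One then checks the four defining properties of a right quantum formal series bialgebroid: property (a) of the definition is inherited from $U_h$; for (b) one uses that linear duality commutes with $h$‑adic completion for ind‑finite projectives together with the classical Lie–Rinehart identification $(V\!L)_*\simeq J\!L$, giving $U_{h*}\simeq J\!L[[h]]$ topologically, and reducing modulo $h$ yields (d), i.e.\ $\bigl(U_{h*}/hU_{h*},A\bigr)\simeq(J\!L,A)$ as right bialgebroids; for (c) one verifies that the weak topology on the dual of an ind‑finite projective is exactly the $I_h$‑adic one for $I_h=\partial_h^{-1}(hA_h)$ and that $U_{h*}$ is $I_h$‑adically complete; finally the coproduct of $U_{h*}$, being transpose to the product of $U_h$ whose domain is the relevant completed tensor product, lands in the completed Takeuchi product of the definition. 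Functoriality is the usual dual‑pairing bookkeeping: a continuous morphism of quantum objects $U_h\to U'_h$ transposes to a continuous morphism $U'_{h*}\to U_{h*}$ intertwining all structure maps. Part (b) is the mirror statement, running the same checks in the opposite direction for ${}^\star(-)$ applied to a pro‑finite projective $F_h\in\rqfa$.

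For (c), the point is reflexivity: for an ind‑finite projective $A_h$‑module the evaluation map into its continuous double dual is an isomorphism, and dually for pro‑finite projectives; these isomorphisms are natural and respect the (topological) bialgebroid structure because the transpose of a transpose is the original map. This gives natural isomorphisms ${}^\star(U_{h*})\simeq U_h$ in $\lqua$ and $\bigl({}^\star F_h\bigr)_*\simeq F_h$ in $\rqfa$, so $\bigl(\,{}^{\phantom{*}}(-)_*,\,{}^\star(-)\bigr)$ is a pair of antiequivalences. Repeating with $(-)^*$, ${}_\star(-)$, ${}_*(-)$, $(-)^\star$, ${}^*(-)$, $(-)_\star$ in place of $(-)_*$, ${}^\star(-)$ — each time invoking the appropriate classical identification ($(V\!L)^*\simeq J\!L$, $(J\!L)^\star\simeq V\!L$, etc.) — yields the remaining three rows.

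The genuinely delicate step is the topological bookkeeping inside (a): verifying that the weak topology on the dual of an ind‑finite projective $A_h$‑module coincides with the $I_h$‑adic topology, that the dual is $I_h$‑adically complete, and that the dualised coproduct lands inside the \emph{completed} Takeuchi product appearing in the definition rather than some larger completion. This is exactly where the hypothesis that $L$ be finitely generated projective is essential, and it is the heart of the argument in \cite[\S5]{CheGav:DFFQG}; the remaining identities (duality of product versus coproduct, of source versus target, compatibility of unit and counit) are routine once the right topologies are in place.
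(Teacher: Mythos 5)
The paper gives no proof of this statement --- it is imported verbatim from \cite[\S5]{CheGav:DFFQG} --- and your proposal is a faithful structural outline of the argument in that reference: reduce to one dual pair by chirality symmetry, verify the defining conditions of the target category using ind-/pro-finite projectivity and the identification $(V\!L)_* \simeq J\!L$, and conclude via reflexivity of continuous double duals (the paper's \S\ref{limits} records exactly the needed facts, e.g.\ ${}^\star(U_*) = U$). Since you, like the paper, ultimately defer the delicate topological verifications to \cite[\S5]{CheGav:DFFQG}, this is essentially the same approach.
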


 On the other hand, Drinfeld functors also provide {\em equivalences}  between quantum universal enveloping bialgebroids and quantum formal series bialgebroids whose properties are summarised in the   {\em quantum duality principle}, which we shall recall now, see \cite[\S6.1]{CheGav:DFFQG}.

 \begin{definition}
   \label{Drinfeld functor vee}
 Let  $ ( F_h, A_h, s_h, t_h, \Delta_h, \pl_h ) $  be a right quantum formal series bialgebroid, let  $  J_h := \ker \pl_h  $ and  $  I_h := \pl_h^{-1}(hA_h) = J_h + h F_h  $.  We define  $ F_h^\vee $  to be the  $h$-adic  completion of
 $$
 (F_h)^\times   :=
 F_h  +  \Sum\limits_{n \in \N_+}  h^{-n} J_h^{n}   =   F_h  +  \Sum\limits_{n \in \N_+}  h^{-n} I_h^{n}    \subseteq   A_{((h))} \otimes_{A_h} F_h.
 $$
A parallel definition applies to  \textsl{left\/}  quantum formal series bialgebroids.
\end{definition}

 The main result about the previous definition is the following:

\begin{proposition}
  \label{prop: Uvee=QUEAd}
  With the above assumptions,  $ F_h^\vee $  is a right (resp.\ left) quantum universal enveloping bialgebroid. This constructions extends to morphisms such that one obtains two different functors
  $$
  (-)^\vee \colon
  \rqfa \to \rqua
 \qquad
  \mbox{and}
  \qquad
  (-)^\vee \colon
  \lqfa \to \lqua,
  $$
  denoted the same way, and referred to as (first) Drinfeld functors.
\end{proposition}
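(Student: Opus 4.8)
The proposition is stated as a recollection from \cite[\S6.1]{CheGav:DFFQG}, so the plan is to reconstruct its proof, which is the bialgebroid avatar of Drinfeld's classical construction. The functoriality part is the easy half: a morphism in $\rqfa$ (resp.\ in $\lqfa$) is, by definition, a continuous morphism of topological bialgebroids over a fixed $A_h$ that in particular intertwines the counits, hence carries $J_h = \Ker\pl_h$ into the corresponding kernel and $I_h = J_h + hF_h$ into the corresponding $I'_h$; after extension of scalars along $A_h \hookrightarrow A_{((h))}$ it therefore restricts to a morphism $(F_h)^\times \to (F'_h)^\times$ and, upon $h$-adic completion, to a continuous morphism $F_h^\vee \to (F'_h)^\vee$. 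Preservation of composition and of identities is immediate, so $(-)^\vee$ is a functor as soon as the first assertion is proven; note that, in contrast with linear duality, the base ring here stays $A_h$ and the (co)handedness is \emph{preserved}.

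For the first assertion I would begin by showing that $(F_h)^\times$ is a $k[[h]]$-subalgebra of $A_{((h))}\otimes_{A_h}F_h$. The identity $F_h + \sum_{n\in\N_+}h^{-n}J_h^n = F_h + \sum_{n\in\N_+}h^{-n}I_h^n$ is the elementary lemma comparing the two filtrations (using $I_h = J_h + hF_h$), and closure under multiplication follows from $I_h$ being a two-sided ideal of $F_h$ (by the character-like property of $\pl_h$ together with the commutativity of $F_h/hF_h$), whence $h^{-p}I_h^p\cdot h^{-q}I_h^q \subseteq h^{-(p+q)}I_h^{p+q}$ and $F_h\cdot h^{-n}I_h^n,\ h^{-n}I_h^n\cdot F_h\subseteq h^{-n}I_h^n$. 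Next I would check that the bialgebroid structure maps of $F_h$, viewed as maps on $A_{((h))}\otimes_{A_h}F_h$ after extension of scalars, restrict to $(F_h)^\times$: the source and target already land in $F_h$; for the counit one has $\pl_h(I_h)\subseteq hA_h$ by definition of $I_h$, and then $\pl_h(I_h^n)\subseteq h^nA_h$ by the character-like property of a bialgebroid counit, so $\pl_h$ maps $(F_h)^\times$ into $A_h$ and not merely into $A_{((h))}$; the substantial point is the coproduct, for which I would prove the co-ideal estimate $\gD_h(I_h)\subseteq I_h\,\widetilde{\otimes}_{A_h}F_h + F_h\,\widetilde{\otimes}_{A_h}I_h$ inside the relevant completed Takeuchi product, deduce $\gD_h(I_h^n)\subseteq \sum_{p+q=n}I_h^p\,\widetilde{\otimes}_{A_h}I_h^q$ by induction, and conclude that $\gD_h(h^{-n}I_h^n)$ lands in the completed Takeuchi product of $(F_h)^\times$ with itself. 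Taking $h$-adic completions and extending all maps by continuity then produces a topological right (resp.\ left) bialgebroid $F_h^\vee$ over $A_h$.

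It remains to match $F_h^\vee$ with the definition of a (right, resp.\ left) quantum universal enveloping bialgebroid. The condition on the base is inherited unchanged, and $(L^*,A)$ is again finitely generated projective because $(L,A)$ is. The two genuinely nontrivial conditions are: that $F_h^\vee$ is topologically free over $k[[h]]$ with fibre $V(L^*)$, i.e.\ $F_h^\vee \simeq V(L^*)[[h]]$ as topological $k[[h]]$-modules; and that the induced right bialgebroid structure on $F_h^\vee/hF_h^\vee$ is that of $V(L^*)$, where $L^*$ carries the Lie-Rinehart bialgebra structure dual to the one $F_h$ induces on $L$ by Remark \ref{rmk: Poisson-from-quantum}. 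For this I would run the Rees-module argument at the heart of the quantum duality principle: filter $F_h$ by the powers of $I_h$, pass to the associated graded, and identify $\mathrm{gr}\,F_h^\vee$ modulo $h$ with $\bigoplus_{n\geq 0}\frkm^n/\frkm^{n+1}$, where $\frkm\subseteq J\!L$ is the augmentation ideal of the semiclassical limit of $F_h$; since $J\!L/\frkm\simeq A$ and $\frkm/\frkm^2\simeq L^*$, a PBW-type theorem for $V(L^*)$ over $A$ yields the module identification, and tracking the coproduct through the filtration yields the identification of bialgebroid structures. The main obstacle is precisely this last step: everything up to it is a careful but essentially routine transcription of Drinfeld's original argument, whereas the semiclassical-limit identification requires genuine work inside the bialgebroid (rather than bialgebra) formalism -- one must keep track of how $J_h$ and $I_h$ interact with the source and target maps and with the Takeuchi subspaces, and verify that the filtration $\big\{\sum_{p+q=n}I_h^p\otimes_{A_h}I_h^q\big\}$ is well enough behaved for the associated-graded computation to close. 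I expect this to be the longest and most delicate portion, and it is where the groupoid case genuinely departs from the group case; the most efficient route in practice is to reduce to the corresponding statement of \cite{CheGav:DFFQG} once the subalgebra and structure-map claims above have been verified.
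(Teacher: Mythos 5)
The paper itself contains no proof of this proposition: it is stated as a recollection, with the entire argument deferred to \cite{CheGav:DFFQG}, \S6.1. So there is nothing in the text to compare your proposal against line by line; what can be said is that your reconstruction is a faithful outline of the strategy carried out in that reference. The decomposition into (a) functoriality via preservation of $J_h$ and $I_h$ under counit-intertwining morphisms, (b) the subalgebra and structure-map estimates on $(F_h)^\times$ (two-sidedness of $I_h$, the counit estimate $\pl_h(I_h^n)\subseteq h^nA_h$, and the coideal estimate $\gD_h(I_h^n)\subseteq\sum_{p+q=n}I_h^p\,\widetilde{\otimes}_{A_h}I_h^q$ in the completed Takeuchi product), and (c) the identification of the semiclassical limit with $V(L^*)$ via the $I_h$-adic filtration, associated graded, and a PBW argument with $\frkm/\frkm^2\simeq L^*$, is exactly the bialgebroid transcription of Drinfeld's argument that the cited source performs. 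You are also right to single out step (c) as the genuinely delicate part, and right that in the bialgebroid setting one must track the interaction of $I_h$ with source, target, and the Takeuchi subspace rather than with a plain tensor square. Two small cautions: the ``character-like property'' of a bialgebroid counit is $\pl_h(fg)=\pl_h\big(f\bract\pl_h(g)\big)$ rather than multiplicativity, so the two-sidedness of $I_h$ and the estimate $\pl_h(I_h^n)\subseteq h^nA_h$ should be derived from that identity together with the $A_h$-linearity of $\pl_h$ (your parenthetical invocation of commutativity of $F_h/hF_h$ is not where the argument lives); and the claim that the coproduct of $(F_h)^\times$ lands in the completed Takeuchi product of $(F_h)^\times$ with itself needs the filtration topology from condition (e) of the definition, not the plain $h$-adic one. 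Since you ultimately reduce to the statement of \cite{CheGav:DFFQG} once the preliminary estimates are in place, the proposal is acceptable as a proof sketch and consistent with how the paper treats this result.
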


\noindent   These, in turn, permit to introduce the second (type of) Drinfeld functor, {see \cite[\S6.2]{CheGav:DFFQG}:}

\begin{definition}
  \label{def: Uh'}
  The functors
  $$
(-)' \colon \lqua \to \lqfa \qquad \mbox{and} \qquad (-)' \colon \rqua \to \rqfa,
  $$
  given by
  $$
  (-)':= {}_* (-) \circ (-)^\vee \circ (-)^*
  = {}^* (-) \circ (-)^\vee \circ (-)_*,
  $$
  and, respectively,
  $$
  (-)':= (-)_* \circ (-)^\vee \circ {}^*(-) = (-)^* \circ (-)^\vee \circ {}_*(-),
  $$
again both denoted the same way, will be referred to as {\em (second) Drinfeld functors}.
\end{definition}

\begin{rem}
  As a byproduct of the previous definition, Drinfeld functors happen to be dual to each other, in the following sense.  If  $   F_h \in \rqfa   $  and  $   U_h \in \lqua   $  are linked by  \mbox{$   U_h = {}^\star F_h   $}  and  $   F_h = {U_h}_*   $,  then for their values under Drinfeld functors we have a similar duality relationship  $   U_h^{ \prime} = {}^*( F_h^\vee )   $  and  $   F_h^\vee = ( U_h^{ \prime} )_\star   $, see {\em loc.~cit.} for further details.
\end{rem}

To sum up, Drinfeld functors connect the categories of quantum groupoids of either type (in a covariant  way, in contrast to contravariance from linear duality). Their properties are even stronger, culminating in the  {\em quantum duality principle} for quantum groupoids, that is:

       \begin{theorem}[Quantum duality principle for quantum groupoids]
     \label{thm: QDP}
   If  $   F_h \in \rqfa   $  is a quantisation of $ J\!L   $ for a Lie-Rinehart algebra $(L,A)$, with  $ L $  finitely generated projective as a left  $ A $-module,
 then  $   F_h^\vee \in \rqfa $  is a quantisation of $ V(L^*)^\op   $,   where  $ L^* = \Hom_A(L,A)$ is the $A$-linear dual of $L$.
  On the other hand, if  $   U_h \in \rqua   $  is a quantisation of $ V\!L ^\op   $,   then  $   U_h^{ \prime} \in\rqfa $  is a quantisation of  $ J( L^*)  $.
\end{theorem}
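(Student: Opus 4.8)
The plan is to derive the two assertions from the explicit description of the first Drinfeld functor (Definition~\ref{Drinfeld functor vee}, Proposition~\ref{prop: Uvee=QUEAd}), a semiclassical analysis of $(-)^\vee$, and the linear-duality antiequivalences between the four categories of quantum groupoids recalled above; the second assertion will then be reduced to the first, which is forced anyway by the very definition of $(-)'$ in Definition~\ref{def: Uh'}. For the first assertion I would start from $F_h \in \rqfa$ quantising $J\!L$, put $J_h := \ker\pl_h$ and $I_h := \pl_h^{-1}(hA_h) = J_h + hF_h$, and recall that $F_h^\vee$ is the $h$-adic completion of $F_h + \Sum_{n \in \N_+} h^{-n} I_h^{\,n}$ inside $A_{((h))} \otimes_{A_h} F_h$, which by Proposition~\ref{prop: Uvee=QUEAd} is already a right quantum universal enveloping bialgebroid; the only thing left is to identify the Lie--Rinehart algebra it quantises, and here the key is the semiclassical picture. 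Since $F_h/hF_h \simeq J\!L$ is commutative, the $h$-commutator induces a Poisson bracket $\{\,,\,\}$ on $J\!L$, from which $(L,A)$ acquires a Lie cobracket as in Remark~\ref{rmk: Poisson-from-quantum}; that Poisson bracket vanishes along the unit section, i.e.\ $\{\,\overline{I_h},\,\overline{I_h}\,\} \subseteq \overline{I_h}$ with $\overline{I_h}$ the augmentation ideal of $J\!L$. Hence for $x,y \in I_h$ one has $[x,y] = h\,\{x,y\}^{\sim}$ with $\{x,y\}^{\sim} \in I_h$, so that $[\,h^{-1}x,\,h^{-1}y\,] = h^{-1}\{x,y\}^{\sim} \in h^{-1}I_h \subseteq F_h^\vee$, and its class modulo $h$ is the linearisation of $\{\,,\,\}$, an element of $\overline{I_h}/\overline{I_h}^{\,2}$; this last $A$-module is canonically $L^* = \Hom_A(L,A)$ (because $J\!L$ has associated graded the completed symmetric algebra $\widehat{S_A(L^*)}$), and the bracket it thereby carries is precisely the Lie--Rinehart bracket on $L^*$ dual to the cobracket on $L$.

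It then remains to upgrade this to an identification of $F_h^\vee/hF_h^\vee$ as a whole $A$-bialgebroid. Modulo $h$ the coproduct of $F_h^\vee$ makes the classes of $h^{-1}I_h$ primitive up to the counit, so $F_h^\vee/hF_h^\vee$ is generated as an $A$-ring by $L^*$, is cocommutative, and is therefore a quotient of the universal enveloping bialgebroid of $L^*$, the $\op$ being forced by the right-bialgebroid conventions carried by $F_h$. A PBW-type comparison of the $h$-adic filtration on $F_h^\vee$ with the canonical filtration of $V(L^*)^\op$ then shows that this quotient is an isomorphism and that $F_h^\vee$ is topologically free over $k[[h]]$ with semiclassical limit exactly $V(L^*)^\op$, which is the claim that $F_h^\vee$ quantises $V(L^*)^\op$.

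For the second assertion I would not argue directly but unwind Definition~\ref{def: Uh'}: up to the choice of left or right linear dual, $U_h'$ is the composite $\big({}^{*}U_h\big)^{\vee}{}_{*}$. Starting from $U_h \in \rqua$ quantising $V\!L^\op$, the linear-duality antiequivalence sends it to ${}^{*}U_h$, a left quantum formal series bialgebroid quantising $J\!L$; the first assertion, in its left-handed form, sends ${}^{*}U_h$ to a left quantum universal enveloping bialgebroid quantising $V(L^*)$; and a further linear dualisation sends this to a quantum formal series bialgebroid quantising the linear (bialgebroid) dual of $V(L^*)$, namely $J(L^*)$. Since linear dualisation followed by continuous dualisation is the identity functor (see \S\ref{limits}), nothing is lost in this round trip, and tracking the chirality and $\op$ conventions through the three steps yields $U_h' \in \rqfa$ quantising $J(L^*)$, as claimed.

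The step I expect to be the main obstacle is the semiclassical identification inside the first assertion: one must establish simultaneously the flatness/PBW statement that $F_h^\vee$ is topologically free over $k[[h]]$ with reduction \emph{exactly} $V(L^*)^\op$ --- neither larger nor smaller --- and the precise matching of the bracket obtained on $L^*$ with the Lie--Rinehart bialgebra structure of Remark~\ref{rmk: Poisson-from-quantum}. The remaining bialgebroid bookkeeping (chirality, $\op$, and for the second assertion the correct choice between the two linear duals so that the functors compose as in Definition~\ref{def: Uh'}) is routine but must be followed with care.
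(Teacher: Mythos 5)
The paper does not actually prove Theorem~\ref{thm: QDP}: it is imported verbatim from \cite[\S6]{CheGav:DFFQG}, so there is no in-text proof to compare against. Your strategy is nonetheless the standard one followed in that reference: a semiclassical analysis of $(-)^\vee$ identifying $\overline{I_h}/\overline{I_h}^{\,2}$ with $L^*$ together with a PBW/flatness comparison, and then a reduction of the $(-)'$ statement to the $(-)^\vee$ statement by unwinding Definition~\ref{def: Uh'} through the linear-duality antiequivalences. The second half of your argument is essentially forced by that definition and is correct modulo the chirality/$\op$ bookkeeping you flag; the only extra ingredient you should make explicit is that the linear duality functors are compatible with semiclassical limits (a quantisation of $V M$ is sent to a quantisation of $J M$ for the \emph{same} Lie--Rinehart algebra $M$), which holds because the pairing reduces mod $h$ to the classical pairing $J\!L = \Hom_A(V\!L,A)$ under the finite projectivity of $L$.

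There is, however, a genuine gap in the first assertion, and it sits exactly where you predict. Showing that $F_h^\vee/hF_h^\vee$ is generated by the image of $h^{-1}I_h$, is cocommutative, and surjects from $V(L^*)^\op$ is the easy direction; the theorem lives in the claim that this surjection is an isomorphism, i.e.\ that $F_h^\vee$ is topologically free with reduction \emph{exactly} $V(L^*)^\op$. Writing ``a PBW-type comparison of filtrations then shows\dots'' does not close this: in the bialgebroid setting one needs the identification of $\mathrm{gr}(J\!L)$ with the completed symmetric algebra $\widehat{S_A(L^*)}$ (which is where finite projectivity of $L$ enters essentially), a compatible system of finitely generated projective pieces as in \S\ref{subsec: top action bialg}, and an argument (in \cite{CheGav:DFFQG} a duality/nondegenerate-pairing argument against $V_hL$) excluding both $h$-torsion and a proper quotient. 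A second, smaller omission: a Lie--Rinehart structure on $L^*$ is not just a bracket but also an anchor $L^*\to\Der_k(A)$, and your semiclassical computation only produces the bracket on $\overline{I_h}/\overline{I_h}^{\,2}$; you must also extract the anchor from the mixed commutators $[h^{-1}x,\,s_h(a)]$ for $x\in I_h$, $a\in A_h$, and match it with the anchor of the dual Lie--Rinehart bialgebra of Remark~\ref{rem: L to L^*}. With those two points supplied, your outline becomes the proof given in the cited source.
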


          Analogous statements apply to Drinfeld functors for left quantum groupoids.

          \begin{rem}
            \label{rem: L to L^*}
 In  Theorem \ref{thm: QDP}  above, the dual $L^*$ is endowed with a canonical structure of a Lie-Rinehart algebra over $A$ such that $(L,L^*)$ becomes a Lie-Rinehart bialgebra; hence, it makes sense to speak of  $ V (L^*) $  and  $ J(L^*)$,  see  \cite[\S2.2]{CheGav:DFFQG}  for further details.
\end{rem}

\subsection{Action bialgebroids constructed from quantum bialgebroids} \label{subsec: top action bialg}

In this section, we will make  precise the action bialgebroid construction in the setting of quantum universal enveloping resp.\ formal series bialgebroids, and generalise some of our preceding results.
For a right quantum formal series bialgebroid $J_h L$, there exists a unique left quantum universal enveloping bialgebroid $V_h L$ such that  $   J_h L = V_h L_*   $  and $   V_hL =   {}^{\raisebox{1pt}{$\star$}} \!(J_h L )   $.
Denote by $\partial_h$ as before the counit of $J_h L$ and set  $\cI_h:=\partial_h^{-1}(hA_h)$ as well as ${\cJ}_h := \Ker(\partial_h)   $.
We will make use of the completed  tensor product  with respect to the $h$-adic topology and  denote it by  $\widehat{\otimes}_{A_h}$.
Furthermore, denote by $\{V^n L \}_{n\in\N}$ the usual filtration of $V\!L$.
We will construct   finitely generated $A_h$-submodules $V^n_h L$ of $V_h L$ such that
\begin{itemize}
  \compactlist{50}
\item[$\bullet$]  \  $ V^n_h L  /hV^n_h L \simeq V^nL   $  as left $A$-modules;
 
\item[$\bullet$]  \  $ V_h L=\varinjlim V^n_h L   $;
 
  \item[$\bullet$]  \  $ (V^i_h L)(  V^j_h L)  \subset V^{i+j}_h L   $.
\end{itemize}
 First, remark that  $J_h L =\varprojlim J^n_h L /\! \cJ_h^{n+1}   $ and
set  $   V^n_h L :=   {}^{\raisebox{1pt}{$\star$}} \! \big(J^n_hL / {\cJ}_h^{n+1} \big)   $.
One has
$$
V^n_h L     :=   \big\{  u_h \in V_h L  \mid  \langle u_h   , \psi_h \rangle = 0 \  \forall \psi_h \in \cJ^{n+1}_h  \big\}.
$$
 Then  $ V^n_h L  $  is a quantization of $V^n L$ and
 $$
 V_h L=\varinjlim V^n_h L.
 $$
Let us study now the properties of  $   J_h L / \cJ_h^{n+1}   $ and  $ V^n_h L    $:
\begin{itemize}
  \compactlist{50}
\item[$\bullet$]  \
  $\cJ_h^{n+1}   $  is a right  ${(U_h)}_*$-module, and hence it is a right $U_h$-comodule; therefore,  $   J_h L  / \cJ_h^{n+1} $
  inherits a right $U_h$-comodule structure;
\item[$\bullet$]
  \  $\Delta ( \cJ_h^{n+1}) \subseteq \bigoplus_{i+j=n+1}
  \cJ_h^i \ \widetilde{\otimes}_{A_h}  \cJ_h^j   $, and hence $\Delta$ induces a map, still denoted
  \begin{equation*}
\Delta \colon J_hL / \cJ_h^{n+1} \to \textstyle\bigoplus\limits_{i+j=n+1} J_h L / \cJ_h^i  \ \widetilde{\otimes}_{A_h}  J_h L / \cJ_h^j.
  \end{equation*}
  \noindent Taking the dual ${^\star}(-)$, we obtain a partial product map
  \begin{equation*}
    V^i_h L  \ \widehat{\otimes} \ V^j_h L
    \to
    V^{i+j}_h L.
\end{equation*}
\end{itemize}
Let now $R_h$ be an element of  $ {}_{V_hL}\mathbf{YD}^{V_hL}$;
then $R_h$ is a right $J_h L$-module. Moreover, the partial left action of $ V^i_h L$ on $R_h$ gives rise to a
partial left coaction of  $   J_h L / \cJ^i   $  on $R_h$ because any map  $   V^i_h L  \ \widehat{\otimes}_{A_h} \, R_h \to R_h   $
can be seen as a map
$$
R_h \to \Hom_{A_h} (V^i_h L   , R_h )    =    (V^i_h L )_* \,  \widehat{\otimes}_{A_h} \, R_h.
$$
On the other hand, any map  $   R_h  \ \widehat{\otimes }_{A_h} \, V_h L  \to R_h   $  can be seen as a map
$$
R_h \to
\Hom_{A_h} \big( \varinjlim V^n_h L    , R_h \big)   =   \varprojlim (V^n_h L )_*  \ \widehat{\otimes}_{A_h} \, R_h   =   \varprojlim ( J_h L / \cJ_h^{n+1} ) \ \widehat{\otimes}_{A_h} \,  R_h.
$$
Thus, any left $V_h L $-module defines a left $J_h L$-comodule.
More precisely, any partial left $V^n L$-module defines a partial  left  $   J_h L /\cJ_h^{n+1} $-comodule.

Next,
we have to check that $R_h$ is an object in $ {}^{(V_hL)_*}\mathbf{YD}_{(V_hL)_*}$:
for that we mimic the proof of  Lemma \ref{sofocle2}\,(iv)  reasoning with  $   u \in V^n_h L    $  and  $   \psi \in J_h L /\cJ_h^{n+1}   $ , and using the properties of  $   J_h / \cJ_h^{n+1}   $  and  $ V^n_h L    $.
If $R_h$ is a braided commutative monoid in  $ {}_{V_h L }\mathbf{YD}^{V_h L }   $,
then
it is also a braided commutative monoid in
$ {}^{(V_hL)_*}\mathbf{YD}_{(V_hL)_*}$, with
$   (V_h L )_* = J_h L   $.
Indeed, the proof of Lemma \ref{rosamunde} works again in our situation if we make the computation in  $ V^n_h L $,  with the latter being a finitely generated projective $A_h$-module.  Then, one can perform the action bialgebroid construction of  $   R_h  \, \widehat{\#} \, V_h L    $,  respectively of  $   J_h L  \, \widehat{\#}  \, R_h   $,  and Theorem \ref{vaexholm} applies to this topological action bialgebroid context.
We need to check that the morphism
$$
  (V_h L )_*  \, \widetilde{\#} \,  R_h    \to    \Hom _{A_h} \big (R_h \, \widehat{\#} \, V_h L    ,   R_h \big),
   \quad
   \psi \, \widetilde{\otimes}_{A_h} r    \mapsto    \big\{  r' \, \widehat{\otimes}_{A_h}  u  \mapsto  r' \cdot (\langle \psi , u \rangle  \lact  r)  \big\}
$$
of right bialgebroids over $R_h   $ is an isomorphism. For this,
as $ V^n_h L  $  is a finitely generated $A_h$-module, one easily sees that the analogous morphism is an isomorphism when replacing $V_hL$ by $ V^n_h L  $, and
we conclude by taking the projective limit.

\subsection{Drinfeld functors on action quantum groupoids}
Returning to our study of action bialgebroids, we focus now on those of quantum type, with the purpose of examining the effect of applying Drinfeld functors to this special type of quantum groupoids.  In a nutshell, the outcome will be the following:

\smallskip

\begin{center}
\noindent {\em Drinfeld functors commute with the action bialgebroid construction}.
\end{center}

\medskip

\noindent In order to achieve such a statement, let us state a technical result first:

\begin{lemma}
  \label{B[smash]1 commut 1[smash]F}
 Let  $(U,A)$  be a left bialgebroid and $ R $ a braided commutative monoid in $\ydu$. Then in $R \# U$ we have, for all  $  r \in R  $  and  $  u \in U  $,
\begin{equation*}
 (r \otimes_A 1_U)(1_R \otimes_A u) = (1_R \otimes_A u)(r \otimes_A 1_U)  \ \xLeftrightarrow{\phantom{M}} \   \ker \gve \cdot R = 0,
\end{equation*}
where here on the right hand side $\cdot$ denotes the left $U$-action on $R$.
A similar statement can be made when talking about right bialgebroids.
\end{lemma}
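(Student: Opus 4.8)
The plan is to reduce the asserted equivalence to a single identity in the underlying $A$-module $R_\ract \otimes_A \due U \lact {}$ of $R \# U$, and then to compare both sides by applying the counit.

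First I would compute the two products in the statement by means of the smash-product multiplication \eqref{orangen1}. Since $\gD(1_U) = 1_U \otimes_A 1_U$ and $1_U$ acts as the identity on $R$, one gets $(r \otimes_A 1_U)(1_R \otimes_A u) = r \otimes_A u$, whereas $(1_R \otimes_A u)(r \otimes_A 1_U) = (u_{(1)} r) \otimes_A u_{(2)}$. Hence the commutation relation in question is equivalent to the identity
\begin{equation*}
r \otimes_A u \ = \ (u_{(1)} r) \otimes_A u_{(2)} \qquad \text{in } R_\ract \otimes_A \due U \lact {}, \quad \text{for all } r \in R,\ u \in U. \tag{$\ast$}
\end{equation*}
Recall here that $R$ carries its two $A$-module structures through the $U$-action, namely $a \lact r = s(a) r$ and $r \ract a = t(a) r$, and that the tensor product $R_\ract \otimes_A \due U \lact {}$ is balanced by $r \ract a \otimes_A u' = r \otimes_A a \lact u'$.

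For the implication $(\ast) \Rightarrow \ker \gve \cdot R = 0$, I would apply the well-defined map $\id_R \otimes_A \gve \colon R_\ract \otimes_A \due U \lact {} \to R_\ract \otimes_A A \cong R$, $r \otimes_A u \mapsto r \ract \gve(u)$, to both sides of $(\ast)$. Using $x \ract a = t(a) x$ and the left-bialgebroid counit axiom $t(\gve(u_{(2)})) u_{(1)} = u$, the right-hand side becomes $(u_{(1)} r) \ract \gve(u_{(2)}) = \big(t(\gve(u_{(2)})) u_{(1)}\big) r = u r$, while the left-hand side gives $r \ract \gve(u) = t(\gve(u)) r$. Therefore $(u - t(\gve(u)))\, r = 0$ for all $u \in U$ and $r \in R$; since $\gve \circ t = \id_A$, one has $\ker \gve = \{\, u - t(\gve(u)) \mid u \in U \,\}$, so this says precisely $\ker \gve \cdot R = 0$. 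Conversely, assuming $\ker \gve \cdot R = 0$, the same description of $\ker \gve$ gives $u r = t(\gve(u)) r = r \ract \gve(u)$ for all $u, r$; inserting this into the right-hand side of $(\ast)$ and using the balancing together with the counit axiom $s(\gve(u_{(1)})) u_{(2)} = \gve(u_{(1)}) \lact u_{(2)} = u$, one obtains $(u_{(1)} r) \otimes_A u_{(2)} = (r \ract \gve(u_{(1)})) \otimes_A u_{(2)} = r \otimes_A \big(\gve(u_{(1)}) \lact u_{(2)}\big) = r \otimes_A u$, which is $(\ast)$. The right-bialgebroid version follows by the same argument, now using the right-handed smash product \eqref{narrateur} and the counit axioms for right bialgebroids (or, alternatively, by transporting the left case along an opposite).

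I do not expect a genuine obstacle in this proof: it is essentially a two-line computation once $(\ast)$ is isolated. The only point that requires care is the bookkeeping of the two $A$-module structures on $R$ and of the correct (target-side) left-bialgebroid counit axioms — and it is exactly this bookkeeping that shows that $\ker \gve \cdot R = 0$ forces each $u \in U$ to act on $R$ through $\gve$, which is what collapses the tensor relation $(\ast)$.
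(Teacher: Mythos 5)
Your proof is correct and follows essentially the same route as the paper: both reduce the commutation relation to the tensor identity $r \otimes_A u = u_{(1)} r \otimes_A u_{(2)}$ in $R_\ract \otimes_A \due U \lact {}$ and exploit the counit splitting $u = t(\gve(u)) + u^{t}$ with $u^{t} \in \ker\gve$. The only (cosmetic) difference is that where the paper isolates the summand $u^{t} r \otimes_A 1_U$ via the direct-sum decomposition $U = s(A) \oplus \ker\gve$ applied in the second tensor leg, you extract the same information by applying $\id_R \otimes_A \gve$ — a slightly cleaner way of making the identical point.
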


\begin{proof}
 Note that because of the splittings  $s(A) \oplus \ker \gve = U = t(A) \oplus \ker \gve $, every  $ u \in U $  can be uniquely written as
  $ u = t \gve(u) + u^{t} $  and  $ u = s \gve (u) + u^{s} $, where  $ u^{t} := u - t \gve(u) $  and  $ u^{s} := u - s \gve(u)$ such that $u^{s}, u^{t} \in \ker \gve$. One has:
  \begin{eqnarray*}
    (r \otimes_A 1_U)(1_R \otimes_A u) - (1_R \otimes_A u)(r \otimes_A 1_U)
    &  {\overset{\scriptscriptstyle{
\eqref{orangen1}
}}{=}}
&
    r \otimes_A u - u_{(1)} r \otimes_A u_{(2)}
    \\
  &  {\overset{\scriptscriptstyle{
\eqref{ydforget2}
}}{=}}
&
    r \gve(u_{(1)})  \otimes_A u_{(2)} - u_{(1)} r \otimes_A u_{(2)}
    \\
  &  {\overset{\scriptscriptstyle{
}}{=}}
&
  - u^{t}_{(1)} r \otimes_A u_{(2)}
    \\
  &  {\overset{\scriptscriptstyle{
}}{=}}
&
  - u^{t}_{(1)} r \otimes_A \big(u_{(2)} - s\gve(u_{(2)})\big)
- u^{t}_{(1)} r \otimes_A s\gve(u_{(2)})
  \\
  &  {\overset{\scriptscriptstyle{
}}{=}}
&
  - u^{t}_{(1)} r \otimes_A u^{s}_{(2)}
- u^{t} r \otimes_A 1_U.
  \end{eqnarray*}
  Hence, when the left hand side of this identity is zero, its right hand side is so was well, but on the right hand side the second summand belongs to $ \ker \gve \cdot R \otimes_A 1_U $, the first one to
  $ \ker \gve \cdot R \otimes_A \ker \gve$. Since
  $ U = 1_U \oplus \ker \gve $,  this implies that the single summands are already zero, and from the second one deduces, in particular, that
  $u^{t} r = 0$. Since  $ u^{t} $ is the generic element in  $ \ker \gve $ as  $ u $  ranges through all of  $ U $, the claim follows.
\end{proof}

Adapting the proof, the previous statement holds true for topological bialgebroids such as quantum universal enveloping bialgebroids:

\begin{corollary}
  \label{cor:B[smash]F h-commutative}
 Let  $ A_h $  and  $ R_h $  be two  $ h $-topologically  complete $ k[[h]]$-algebras,  let $ (F_h, A_h) $  be a (possibly topological) left bialgebroid, and $ R_h $ a braided commutative monoid in $\ydfh$.  Then in $R_h \# F_h$ we have for all $r \in R_h$ and $f \in F_h$:
\begin{equation*}
    (r \otimes_{A_h} 1_{F_h})(1_{R_h}  \otimes_{A_h} f) = (1_{R_h}  \otimes_{A_h} f)(r  \otimes_{A_h} 1_{F_h}) \!\!\! \mod \! h
    \ \xLeftrightarrow{\phantom{M}} \   \ker \gve_h \cdot R_h \in h R_h.
\end{equation*}
A similar statement can be made when talking about right bialgebroids.
\end{corollary}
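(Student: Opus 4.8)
The plan is to run the proof of Lemma~\ref{B[smash]1 commut 1[smash]F} essentially verbatim, now reading every identity modulo $h$. The single structural ingredient that has to be re-examined in the topological setting is the pair of direct sum decompositions
$$
s_h(A_h) \oplus \ker \gve_h \,=\, F_h \,=\, t_h(A_h) \oplus \ker \gve_h,
$$
which still hold: the maps $s_h \circ \gve_h$ and $t_h \circ \gve_h$ are $k[[h]]$-linear idempotent endomorphisms of $F_h$ (because $\gve_h \circ s_h = \id = \gve_h \circ t_h$), with images $s_h(A_h)$, $t_h(A_h)$ and common kernel $\ker \gve_h$, and, being continuous, they are topological splittings. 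Hence every $f \in F_h$ is uniquely $f = t_h\gve_h(f) + f^{t} = s_h\gve_h(f) + f^{s}$ with $f^{s}, f^{t} \in \ker \gve_h$, and --- this is the point that matters below --- since the idempotents are $k[[h]]$-linear the decomposition descends to the reduction $F_h / hF_h = \overline{s_h(A_h)} \oplus \overline{\ker \gve_h}$.

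With this in hand I would repeat the computation from the proof of Lemma~\ref{B[smash]1 commut 1[smash]F} --- using \eqref{orangen1}, \eqref{ydforget2}, the $A_h$-bilinearity of $\Delta_h$, and the (completed) Takeuchi property of the coproduct --- which yields, in $R_h \# F_h$, the identity
\begin{equation*}
(r \otimes_{A_h} 1_{F_h})(1_{R_h} \otimes_{A_h} f) - (1_{R_h} \otimes_{A_h} f)(r \otimes_{A_h} 1_{F_h}) = -\, f^{t}_{(1)}\, r \otimes_{A_h} f^{s}_{(2)} \,-\, f^{t} r \otimes_{A_h} 1_{F_h},
\end{equation*}
where $f^{t}_{(1)} := (f_{(1)})^{t} \in \ker\gve_h$, $f^{s}_{(2)} := (f_{(2)})^{s} \in \ker\gve_h$, and $\cdot$ denotes the left $F_h$-action on $R_h$. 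If the left-hand side lies in $h\,(R_h \# F_h)$, then so does the right-hand side; reducing modulo $h$ and invoking the direct sum $F_h/hF_h = \overline{s_h(A_h)} \oplus \overline{\ker\gve_h}$, the summand whose second tensor leg lies in $\overline{s_h(A_h)}$ must vanish by itself, i.e.\ $\overline{f^{t} r} = 0$ in $R_h/hR_h$. As $f$ ranges over $F_h$ the element $f^{t}$ ranges over all of $\ker \gve_h$, so this is exactly the assertion $\ker \gve_h \cdot R_h \subseteq hR_h$. Conversely, if $\ker \gve_h \cdot R_h \subseteq hR_h$ then both $f^{t}_{(1)} r$ and $f^{t} r$ lie in $hR_h$, so the displayed difference lies in $h\,(R_h \# F_h)$ and the two smash products agree modulo $h$.

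Since $\gve_h$ is (left) $A_h$-linear and every map in play is continuous, the passage to $F_h / hF_h$ is harmless and no completeness input beyond the stated one is needed; the mirror statement for right bialgebroids follows from the analogous computation based on the product \eqref{narrateur} and the source/target maps \eqref{beethovensonata23} of a right action bialgebroid. I do not expect a genuine obstacle here: the only point requiring care is bookkeeping, namely ensuring that the splitting of $F_h$ is carried out by $k[[h]]$-linear (not merely $k$-linear) idempotents, which is precisely what makes it simultaneously compatible with the $h$-adic topology entering $R_h \# F_h$ and with the reduction modulo $h$ where the separation-of-summands step is applied.
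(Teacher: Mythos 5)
Your proposal is correct and matches the paper's intent exactly: the paper's entire justification for this corollary is the one-line remark that it follows by ``adapting the proof'' of Lemma~\ref{B[smash]1 commut 1[smash]F}, which is precisely what you carry out, reusing the splittings $s_h(A_h)\oplus\ker\gve_h = F_h = t_h(A_h)\oplus\ker\gve_h$, the resulting two-summand formula for the commutator, and the separation of summands now performed in $F_h/hF_h$. The only point you add beyond the paper --- that the idempotents $s_h\circ\gve_h$ and $t_h\circ\gve_h$ are $k[[h]]$-linear, so the direct-sum decomposition descends modulo $h$ --- is exactly the one detail worth checking, and it is right.
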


The key consequence we are interested in is the following:

\begin{theorem}
  \label{thm: vee}
     Let $(F_h, A_h) \in \lqfa$ be a left quantum formal series bialgebroid and 
     $ R_h $ an  $ h $-topologically  complete $ k[[h]]$-algebra
that is a braided com\-mutative monoid 
in  $ \ydfh  $, and 
such that  $  R_h / hR_h  $  is commutative. 
  Moreover, assume that
\begin{equation*}
   \quad   fr = 0  \!\!\! \mod \! h
\end{equation*}
for all $f \in \ker \gve_h \subseteq F_h$ and $r \in R_h$.
Then $ R_h $  is a braided commutative YD algebra over  $ F_h^\vee $ as well, for which the  $ F_h^\vee $-coaction  $ \rho^\vee_{R_h} $  has the additional property
$$
\rho^\vee_{R_h}(r) = r \otimes_{A_h} 1_{U_h} \!\!\! \mod \! h
$$
for all $r \in R_h$, and there exists a canonical isomorphism
$$
R_h  \# F_h^\vee \xrightarrow{\ \simeq \ }  ( R_h  \# F_h )^{\vee}
$$
of  (topological)  left bialgebroids over  $ R_h $  that is uniquely determined by
$$
r  \otimes_{A_h} ( h^{-1} f)   \mapsto   h^{-1} ( r  \otimes_{A_h}  f)
$$
for all $  r \in R_h $ and all $ f \in \ker \gve_h  $.
In particular,  $  R_h \# F_h^\vee  $  is a left quantum universal enveloping bialgebroid over  $ R_h  $  whose semiclassical limit is  $ R \# V (L^*) $
if the semiclassical limit of  $ F_h $  is  $ J\!L $  and that of  $ R_h $  is  $ R  $;  hence, $ R_h \# F_h^\vee $  is a quantisation of  $ R \# V(L^*)  $.
\end{theorem}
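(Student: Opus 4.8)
The plan is to proceed in four stages. First I would establish that $R_h$ is a braided commutative Yetter--Drinfeld algebra over $F_h^\vee$. The starting point is that $R_h$ is already a braided commutative monoid in $\ydfh$; since $F_h^\vee$ is, by Proposition~\ref{prop: Uvee=QUEAd}, a left quantum universal enveloping bialgebroid obtained from $F_h$, and there is a natural pairing between $F_h^\vee$ and its continuous dual, the action and coaction of $F_h$ on $R_h$ should be re-expressed in terms of $F_h^\vee$ using the linear-duality mechanism of \S\ref{limits}. Concretely, the left $F_h$-action on $R_h$ dualises to a left coaction of ${}_\star(F_h)_*$-type objects, and one checks degree by degree (working modulo each power of $h$, or in the finitely generated pieces $V^n_h L$ as in \S\ref{subsec: top action bialg}) that this is exactly a coaction of $F_h^\vee$. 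The technical assumption $fr \equiv 0 \bmod h$ for $f \in \ker\gve_h$ is precisely what Corollary~\ref{cor:B[smash]F h-commutative} needs, and it forces $\rho^\vee_{R_h}(r) \equiv r \otimes_{A_h} 1 \bmod h$: the coaction is trivial at the semiclassical level. The braided commutative monoid property in $\ydfh$ then transfers via Lemma~\ref{rosamunde} (in its topological incarnation from \S\ref{subsec: top action bialg}), so that $R_h$ becomes a braided commutative YD algebra over $F_h^\vee$.

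Second, with this in place, the action bialgebroid $R_h \# F_h^\vee$ is defined by Theorem~\ref{tollhaus2} (in the completed/topological version described in \S\ref{subsec: top action bialg}), and since $F_h^\vee \in \lqua$ is a left quantum universal enveloping bialgebroid, $R_h \# F_h^\vee$ is a topological left bialgebroid over $R_h$; if $F_h^\vee$ carries a right Hopf algebroid structure, so does the smash product. Third, I would construct the comparison map. The candidate $r \otimes_{A_h}(h^{-1}f) \mapsto h^{-1}(r \otimes_{A_h} f)$ for $f \in \ker\gve_h$ is the natural one: inside $A_{((h))} \otimes_{A_h}(R_h \# F_h)$ one has $h^{-1}(r \otimes_{A_h} f) = r \otimes_{A_h}(h^{-1}f)$ formally, so the assignment extends $\mathbb{C}[[h]]$-linearly and $h$-adically to a map $R_h \# F_h^\vee \to (R_h \# F_h)^\vee$ once one verifies it lands in $(R_h\#F_h)^\times$ as in Definition~\ref{Drinfeld functor vee}. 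For that, note $\ker\gve_{R_h\#F_h}$ is generated (as an $R_h$-ideal, modulo lower order) by $1_{R_h}\otimes_{A_h}\ker\gve_h$ together with the source-target relations; the hypothesis $fr\equiv 0\bmod h$ and Corollary~\ref{cor:B[smash]F h-commutative} ensure that $\ker\gve_h \cdot R_h \subseteq hR_h$, so that the powers $J_{R_h\#F_h}^n$ behave as expected and $h^{-n}(1_{R_h}\otimes_{A_h} f_1\cdots f_n)$ stays inside $(R_h\#F_h)^\vee$. One checks the map is a morphism of left bialgebroids over $R_h$ by comparing it on generators $r\otimes_{A_h}1$ and $1_{R_h}\otimes_{A_h}(h^{-1}f)$ using the smash product formula \eqref{orangen1} and the coproduct \eqref{mandarinen1}, the compatibility being straightforward once one knows the Drinfeld functor $(-)^\vee$ commutes with products and coproducts (Proposition~\ref{prop: Uvee=QUEAd}).

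Fourth, bijectivity: surjectivity follows because $(R_h\#F_h)^\vee$ is, by construction, the $h$-adic completion of $R_h\#F_h + \sum_n h^{-n}J^n$, and every such generator is hit; injectivity follows by a filtration/associated-graded argument, comparing the semiclassical limits --- modulo $h$ the map becomes the identity on $R \# V(L^*)$ since $F_h^\vee$ has semiclassical limit $V(L^*)$ by the quantum duality principle (Theorem~\ref{thm: QDP}) and the coaction $\rho^\vee_{R_h}$ is trivial mod $h$, so the smash product degenerates to the ordinary tensor product at the classical level --- and then lifting back via $h$-adic completeness and the fact that both sides are topologically free. The main obstacle I expect is the bookkeeping in the third step: showing that the formally defined map actually takes values in $(R_h\#F_h)^\vee$ and not in some larger localisation, i.e.\ controlling the ideals $J^n_{R_h\#F_h}$ in terms of $J^n_{F_h}$ and $R_h$. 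This is exactly where the technical hypothesis $fr\equiv 0\bmod h$ is unavoidable, since without it $\ker\gve_h$ and $R_h$ would not commute modulo $h$ and the Rees-algebra-type description of $(R_h\#F_h)^\vee$ that underlies the whole argument would fail; here Corollary~\ref{cor:B[smash]F h-commutative} does the decisive work, reducing the commutation check to the hypothesis. Once that is secured, the verification that $R_h\#F_h^\vee$ satisfies the axioms of a left quantum universal enveloping bialgebroid over $R_h$, with the stated semiclassical limit $R\# V(L^*)$, is a direct consequence of Proposition~\ref{prop: Uvee=QUEAd} applied to $R_h\#F_h \in \lqfa$ together with the isomorphism just established.
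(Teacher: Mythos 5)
Your plan follows the paper's proof in all essential respects: the hypothesis $fr\equiv 0\bmod h$ is used exactly as in the paper to extend the $F_h$-action on $R_h$ to $h^{-1}\ker\gve_h$ and hence to all of $F_h^\vee$, and the identification $(R_h\#F_h)^\vee\simeq R_h\#F_h^\vee$ rests, as in the paper, on controlling the powers $(\ker\gve_{R_h\#F_h})^n$ inside $R_h\otimes_{A_h}I_{F_h}^n$ by means of Corollary \ref{cor:B[smash]F h-commutative}, after which the quantisation statement follows from the quantum duality principle. The only cosmetic differences are that the paper obtains the YD structure over $F_h^\vee$ by directly coextending the coaction and extending the action (compatibility being inherited), rather than via the linear-duality/Lemma \ref{rosamunde} detour you sketch in your first stage (that lemma concerns a bialgebroid and its linear dual, not the $(-)^\vee$ construction), and that the paper establishes the isomorphism as an outright equality $(R_h\#F_h)^\times=R_h\otimes_{A_h}F_h^\times$ rather than by a separate surjectivity/injectivity argument.
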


\begin{proof}
With respect to the counit $\gve_h$ of $F_h$, let us again write
$ I_h = \gve_h^{ -1}( h A_h )   $,
and note as above that  $   I_h = h F_h + \ker \gve_h   $.  Recall furthermore that
$  F_h^{ \vee} = \widehat{F_h^{ \times}}$, where the latter denotes the $ h $-adic
completion of  $ F_h^{ \times} $, see
Definition \ref{Drinfeld functor vee}.
The right  $ F_h $-coaction  on  $ R_h $  clearly coextends to a right  $ F_h^{ \vee} $-coaction.  If the left  $ F_h $-action  on  $ R_h $  also extends to a left action by  $ F_h^{ \vee} $,  then these coaction and action will obviously be compatible (as the original ones were) so as to turn  $ R_h $  into a braided commutative YD algebra over  $ F_h^{ \vee} $,  as expected.  Hence, it is enough to prove that the  $ F_h $-action  on  $ R_h $  does extend to an action by  $ F_h^{ \vee} $.

Since  $ F_h^{ \vee} $  is (topologically) generated by  $ h^{-1} I_h = F_h + h^{-1} \ker \gve_h   $,
it is enough to show that we can extend the action of  $ F_h $  on  $ R_h $  to elements of  $   h^{-1} \ker \gve_h   $.  Now, this amounts to  $   ( h^{-1} \ker \gve_h) \cdot R_h  \subseteq  R_h   $, which is equivalent to  $  (\ker \gve_h ) \cdot R_h \subseteq h R_h  $,  which is true by assumption.

   Next, let us abbreviate  $  \mathcal{F}_h := R_h \# F_h  $.  As before,  $  \mathcal{F}_h^{ \vee} = \widehat{\mathcal{F}_h^{\times}}  $  is the $ h $-adic   completion of  $ \mathcal{F}_h^{ \times}  $,
where as above
$$
\mathcal{F}_h^{ \times} = s(R_h) + \Sum\limits_{n>0}   ( h^{-1} I_{\mathcal{F}_h} )^n = R_h \otimes_{A_h} F_h + \Sum\limits_{n>0}   ( h^{-1} \ker \gve_{\mathcal{F}_h} )^n
$$
with
$   I_{\mathcal{F}_h}   = \gve_{\mathcal{F}_h}^{ -1}\big( h^{-1} R_h \big) = h   \mathcal{F}_h + \ker \gve_{\mathcal{F}_h}  .
$
Note also that, by the very definition of the counit for  $   \mathcal{F}_h $,  we have  $  \ker \gve_{\mathcal{F}_h} = \ker(\id_{R_h} \otimes_{A_h} \gve_h) \supseteq R_h \otimes_{A_h}  \ker \gve_h  $.  Then observe
that for any  $   r, r' \in R_h   $  and  $   f, f' \in \ker \gve_h   $,  one has
$$
(r \# f)  ( r' \# f' )    =    r \cdot  (f_{(1)}r') \# f_{(2)}  f'     \in
\pig( h R_h \otimes_{A_h} \ker \gve_h + R_h \otimes_{A_h} (\ker \gve_h)^{ 2}  \pig)
$$
since  $  \Delta (f) \in \Delta( \ker \gve_h )   \subseteq  (\ker \gve_h \otimes_{A_h} F_h + F_h \otimes_{A_h} \ker \gve_h )   $  and  Corollary  \ref{cor:B[smash]F h-commutative}  applies.
This yields
$$
(\ker \gve_{\mathcal{F}_h})^2    =    (R_h \otimes_{A_h} \ker \gve_h) \cdot (R_h \otimes_{A_h} \ker \gve_h)    \subseteq    h   R_h \otimes_{A_h} \ker \gve_h + R_h \otimes_{A_h} (\ker \gve_h)^2,
$$
 and then by iteration
 $$
(\ker \gve_{\mathcal{F}_h})^n    \subseteq    \Sum\limits_{s=0}^{n} h^{n-s}   R_h \otimes_{A_h} (\ker \gve_h)^s   =   R_h \otimes I_{  F_h}^{ n}
 $$
for all  $   n \in \N^+   $,  while the converse inclusion is obvious.  Therefore, we obtain
$$
\mathcal{F}_h^{ \times}    =    R_h \otimes_{A_h} F_h + \Sum\limits_{n>0}
( h^{-1}  \ker \gve_{\mathcal{F}_h} )^n    =    R_h \otimes_{A_h} F_h + R_h \otimes_{A_h} {\textstyle \sum\limits_{n>0}} h^{-n} I_{  F_h}^{ n} = R_h \otimes_{A_h} F_h^{ \times},
$$
whence  $   \mathcal{F}_h^{ \vee}   =   R_h  \widehat{\otimes}_{A_h}  F_h^{ \vee}   $
as  $ k[[h]] $-modules as in the last claim, and which extends to the bialgebroid structures.
As a last step, denoting by  $ \rho_{R_h} $  the right  $ F_h $-coaction  on  $ R_h  $, one has
$$
\rho_{R_h}(r)   =   r_{[0]} \otimes_{A_h} r_{[1]}  =   r \otimes_{A_h} \! 1 + r_{[0]} \otimes_{A_h} \big( r_{[1]} - s \gve_h (r_{[1]} ) \big)
$$
with  $  r_{[0]} \otimes_{A_h} \big(r_{[1]} - s \gve_h(r_{[1]}) \big)
\in R_h \otimes_{A_h} \ker \gve_h  $.
 Then, denoting by  $ \rho^\vee_{R_h} $  the  right  $ F_h^\vee$-coaction  on  $ R_h $  induced by  $ \rho_{R_h} $,  we have
$$
  \rho^\vee_{R_h}(r)  =   r \otimes_{A_h} \! 1 + h \pig( h^{-1} \big( r_{[0]} \otimes_{A_h} \big( r_{[1]} - s\gve_h(r_{[1]}) \big) \big) \pig)
$$
  with  $ r_{[0]} \otimes_{A_h} h^{-1} \big( r_{[1]} - s\gve_h (r_{[1]}) \big)  \in R_h \otimes_{A_h}  h^{-1} \ker \gve_h \subseteq R_h \otimes_{A_h} \! h^{-1} I_{h} $,  and therefore we eventually end up with  $ \rho^\vee_{R_h}(r) = r \otimes_{A_h} \! 1  \! \mod \! h $.

   Finally, the last statement about  $ R_h \# F_h^\vee $  being a quantisation of  $ R \# V(L^*) $ follows at once from the above along with  Theorem \ref{thm: QDP}.
\end{proof}

Needless to say, a similar statement can be made when starting from right quantum bialgebroids.
   The dual result, concerning the Drinfeld functor  $ (-)'  $,  reads as follows:

   \begin{theorem}
     \label{thm: primeoprime}
Let $(U_h,A_h) \in \lqua $ be a left quantum universal enveloping bialgebroid
and 
$ R_h $  an  $ h $-topologically  complete $ k[[h]]$-algebra
that is a braided com\-mutative monoid 
in  $ \yduh  $, and 
such that  $  R_h / hR_h  $  is commutative.
Moreover, assume that
 \begin{equation*}
    \rho_{R_h}(r) = r \otimes_{A_h} \! 1_{U_h}  \!\!\! \mod \! h
 \end{equation*}
 with respect to the right $U_h$-coaction  $ \rho_{R_h} $  on $R_h$.  Then $ R_h $  is a braided commutative YD algebra over  $ U'_h $  for which the  $ U_h' $-action  on  $ R_h $  has the property
  $$  ur = 0  \!\!\! \mod \! h  $$
for all $ u \in \ker \gve_{U'_h}$ and  $r \in R_h$. Furthermore, there exists a canonical isomorphism
$$
R_h  \# U_h'  \xrightarrow{\ \simeq \ }  (R_h  \# U_h )'
$$
of left bialgebroids over  $ R_h $,  given by  $  r \otimes_{A_h} u
\mapsto r \otimes_{A_h} u  $  for all  $  r \in R_h  $ and  $  u \in U'_h  $.
In par\-tic\-u\-lar,  $  R_h \# U_h^{\prime}  $  is a left quantum formal series bialgebroid over  $ R_h  $  whose semiclassical limit is  $ R \# J(L^*) $
if the semiclassical limit of  $ U_h $  is  $ V\! L $  and that of  $ R_h $  is  $ R $, hence,  $ R_h \# U_h^{\prime} $  is a quantisation of  $ R \# J(L^*)  $.
\end{theorem}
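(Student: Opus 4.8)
The plan is to prove the statement by the ``linear duality trick'' announced in the introduction: instead of building the isomorphism by hand, I would realise the second Drinfeld functor, via Definition \ref{def: Uh'}, as $(-)' = {}^*(-) \circ (-)^\vee \circ (-)_*$, and then reduce everything to Theorems \ref{vaexholm} and \ref{thm: vee} — and their right-handed/mirror versions — applied in the topological framework of \S\ref{subsec: top action bialg}. First I would set the stage. Writing $(U_h)_* = J_h L \in \rqfa$, I would invoke \S\ref{subsec: top action bialg}: $R_h$ is a braided commutative monoid in ${}^{(U_h)_*}\mathbf{YD}_{(U_h)_*}$, the action bialgebroids $R_h \# U_h$ and $(U_h)_* \# R_h$ are well-defined topological bialgebroids, $R_h \# U_h$ is a left quantum universal enveloping bialgebroid over $R_h$ (so that $(-)'$ may be applied to it), and the topological version of Theorem \ref{vaexholm} gives an isomorphism of topological right bialgebroids $(R_h \# U_h)_* \simeq (U_h)_* \# R_h$.

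Next I would translate the hypothesis. Under the (continuous) equivalence of Lemma \ref{sofocle2}, used exactly as in \S\ref{subsec: top action bialg}, the right $U_h$-coaction on $R_h$ corresponds via \eqref{onirique} to the right $(U_h)_*$-action $r\psi = r_{[0]} \langle \psi, r_{[1]} \rangle$; since $\langle \psi, 1_{U_h} \rangle = \partial_h(\psi)$ is the counit of $J_h L$, the assumption $\rho_{R_h}(r) = r \otimes_{A_h} 1_{U_h} \bmod h$ yields $r\psi \equiv r\,\partial_h(\psi) \bmod h$, hence $R_h \cdot \cJ_h \subseteq h R_h$ with $\cJ_h := \ker \partial_h$. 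This is precisely the technical hypothesis needed to run the right-handed mirror of Theorem \ref{thm: vee} on the right quantum formal series bialgebroid $(U_h)_* = J_h L$ and the braided commutative monoid $R_h$: it produces an isomorphism of topological right quantum universal enveloping bialgebroids $\big( (U_h)_* \# R_h \big)^\vee \simeq (U_h)_*^\vee \# R_h$ and tells us that $R_h$ is a braided commutative YD algebra over $(U_h)_*^\vee$ whose $(U_h)_*^\vee$-coaction is trivial modulo $h$.

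Then I would assemble. Applying $(-)^\vee \colon \rqfa \to \rqua$ to the isomorphism of the first paragraph and composing with the one just obtained gives $\big( (R_h \# U_h)_* \big)^\vee \simeq (U_h)_*^\vee \# R_h$; applying ${}^*(-) \colon \rqua \to \lqfa$ and invoking the analogue of Theorem \ref{vaexholm} for right action bialgebroids (the mirror construction, cf.\ Proposition \ref{tollhaus3}, again in the topological setting) one gets
\[
(R_h \# U_h)' = {}^*\big( ((R_h \# U_h)_*)^\vee \big) \simeq {}^*\big( (U_h)_*^\vee \# R_h \big) \simeq R_h \# {}^*\big( (U_h)_*^\vee \big) = R_h \# U_h',
\]
the last equality being the definition of $U_h'$. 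Chasing a representative $r \otimes_{A_h} u$ through this chain — each arrow being the canonical, ``identity on representatives'' map — shows that the composite is $r \otimes_{A_h} u \mapsto r \otimes_{A_h} u$, and it is an isomorphism of left bialgebroids over $R_h$ since each constituent is. The extra property $ur = 0 \bmod h$ for $u \in \ker \gve_{U_h'}$ follows by dualising, via \eqref{ludovisi}, the triviality modulo $h$ of the $(U_h)_*^\vee$-coaction established above: a coaction trivial modulo $h$ induces a $U_h'$-action through the counit, $ur \equiv \gve_{U_h'}(u)\, r \bmod h$. Finally, $R_h \# U_h' \simeq (R_h \# U_h)'$ lies in $\lqfa$ because $(-)' \colon \lqua \to \lqfa$ and $R_h \# U_h \in \lqua$, and its semiclassical limit is read off from Theorem \ref{thm: QDP}: $U_h'$ quantises $J(L^*)$ and $R_h$ quantises $R$, so $R_h \# U_h'$ quantises $R \# J(L^*)$.

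The conceptual content here is light — everything is glued together from results already available — and the real work is the bookkeeping. One has to set up and verify the right-handed/mirror versions of Theorems \ref{vaexholm} and \ref{thm: vee} in the $h$-adic and $\cI_h$-adic topologies, carrying the completed tensor and Takeuchi products along and reducing, as in \S\ref{subsec: top action bialg}, to the finitely generated pieces $V^n_h L$; one has to keep straight which of the four linear duals intervenes at each stage; and one has to confirm that the composite of all these canonical identifications is literally the naive map $r \otimes_{A_h} u \mapsto r \otimes_{A_h} u$. I expect this last point — pinning down the topological structures tightly enough that the whole chain collapses to the identity on representatives — to be the main obstacle.
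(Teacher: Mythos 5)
Your proposal follows essentially the same route as the paper: the paper likewise realises $(-)'$ as a composite of linear duals with $(-)^\vee$ (using the variant ${}_*(-)\circ(-)^\vee\circ(-)^*$ of Definition \ref{def: Uh'}) and obtains the isomorphism as the chain $(R_h\#U_h)' \simeq {}_*\big((U_h^*\#R_h)^\vee\big) \simeq {}_*\big((U_h^*)^\vee\#R_h\big) \simeq R_h\#U_h'$, invoking topological/mirrored versions of Theorems \ref{vaexholm} and \ref{thm: vee} exactly as you do, and it verifies the hypothesis $rf=0 \bmod h$ for $f\in\ker\pl$ by the same computation $rf=r_{[0]}\langle r_{[1]},f\rangle\equiv r\,\pl(f)$. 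The only (minor) divergence is the last property: where you dualise the mod-$h$ triviality of the $(U_h^*)^\vee$-coaction via \eqref{ludovisi} to get $ur\equiv\gve_{U_h'}(u)r \bmod h$ directly, the paper instead proves $\ker\gve_{U_h'}\subseteq hU_h$ by a pairing/splitting argument — both are valid.
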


\begin{proof}
  That $R_h$ is a braided commutative YD algebra over $U_h'$ if it so over $U_h$ follows from the very definition of  $ (-)' $  along with  Theorem \ref{thm: vee}  and (a right dual version of)  Lemma \ref{rosamunde},  after adapting the latter to the present setup of the topological bialgebroid  $ U_h $  which is not finite projective as an  $ A_h $-module but rather a direct limit of finite projective submodules, that is, ind-projective ({\em cf.}\
  \S\ref{limits}).
 The claimed isomorphism then follows via a chain of duality isomorphism. More precisely,
  \begin{equation*}
    \begin{split}
      (R_h \# U_h)^{ \prime} &\xrightarrow{\ = \ } \raisebox{-2pt}{${}_*$} \pig( \hskip -.8pt \big((R_h \# U_h)^{ * }\big)^{\!\vee} \hskip -.5pt \pig)  \xrightarrow{\ \simeq \ }
      \raisebox{-1.5pt}{${}_*$} \big( (U_h^{ * } \# R_h)^\vee \big)
      \\
      &
      \xrightarrow{\ \simeq \ }
      \raisebox{-1.5pt}{${}_*$} \big( (U_h^{ * })^\vee \# R_h \big)
      \xrightarrow{\ \simeq \ }
     R_h \# \raisebox{-1.5pt}{${}_*$} \big((U_h^{ * })^\vee\big)
\xrightarrow{\ = \ }
     R_h \# U_h',
    \end{split}
  \end{equation*}
  where the first and the last arrow are just the definitions, the second arrow is the isomorphism from Theorem \ref{vaexholm} once suitably adapted to the present case of an ind-projective bialgebroid  $ U_h $
(as explained in  \S\ref{subsec: top action bialg}),
  the third arrow is the isomorphism from Theorem \ref{thm: vee} right above, and the fourth one is the isomorphism arising from a right bialgebroid version of Theorem \ref{vaexholm} again.
More in detail, we are applying the {\em right}  (quantum) bialgebroid version of  Theorem \ref{thm: vee} to  $ F_h := U_h^*  $.  In this case, the condition
$ rf = 0  \!\! \mod \! h $  for the  (right)  action of  all $  U^*_h  $  on  $ R_h $  is fulfilled because of the following chain of identities:
$$
rf   =   r_{[0]}  {\mkern 1mu} \langle r_{[1]}  , f \rangle    \equiv_{(\text{mod} \ h)}  r  {\mkern 1mu} \langle 1_{U_h}  , f \rangle   \equiv_{(\text{mod} \ h)}   r {\mkern 1mu} \pl (f),
$$
where $\pl$ denotes the right counit of $U^*_h$, analogous to Eq.~\eqref{weiszjanich}.
 This implies  $  rf = 0 \!\! \mod \! h  $  for all  $  f \in \ker \pl  $  and $  r \in R_h  $.

 Finally, let us check that the additional property
 \begin{equation}
   \label{eq: u.r=0 mod hU'}
   ur = 0  \!\!\! \mod \! h
   \end{equation}
 is fulfilled for all $ u \in \ker \gve^{\raisebox{-1pt}{$\scriptstyle\prime$}}$ and $r \in R_h$, where $  \gve^{\raisebox{-1pt}{$\scriptstyle\prime$}} := \gve_{U'_h}$. Abbreviating $\gve := \gve_{U_h}$,
recall
that the left bialgebroid  $ U_h $  splits, as a left  $ A_h $-module,  into  $  \due {U_h} \lact {} = s(A_h) \oplus \ker \gve  $.  Similarly, the right bialgebroid  $  U^*_h $  splits, as a right  $ A_h $-module,  into  $   {U^*_h}_\ract = t^r(A_h) \oplus \ker \pl  $.
With respect to the natural pairing
  $ \langle \cdot , \cdot \rangle \colon U^*_h \times U_h \to A_h  $ as described in \eqref{duedelue},
the very definition  $  U'_h := {}_*((U^*_h)^{{\mkern -1mu}\vee})  $  implies
  $  \langle (U^*_h)^\vee, U'_h   \rangle \subseteq A_h  $, and therefore
  $  \langle h^{-1} \ker \pl, U'_h  \rangle \subseteq A_h  $, which, in turn,
means
  $  \langle  \ker \pl, U'_h   \rangle \subseteq hA_h  $.
 Now look at  $  \ker \gve^{\raisebox{-1pt}{$\scriptstyle\prime$}} = U'_h \cap \ker \gve  $.  On the one hand, we have
  $  \langle  \ker \pl, \ker \gve^{\raisebox{-1pt}{$\scriptstyle\prime$}}   \rangle \subseteq hA_h  $
 by the previous analysis; on the other hand, the definitions give
  $  \langle  1_{U^*_h}, \ker \gve  \rangle = 0  $,
 hence
  $  \langle  1_{U^*_h}, \ker \gve^{\raisebox{-1pt}{$\scriptstyle\prime$}}   \rangle = 0  $ as well.
 This together with the splitting  of $  {U^*_h}_\ract $ as above leads to the conclusion that
  $  \langle   U^*_h, \ker \gve^{\raisebox{-1pt}{$\scriptstyle\prime$}}  \rangle \subseteq hA_h  $,
 hence  $  \ker \gve^{\raisebox{-1pt}{$\scriptstyle\prime$}} \subseteq h \mkern 1mu ({}_\star(U_h^* )) = hU_h  $.  The outcome is  $  \ker \gve^{\raisebox{-1pt}{$\scriptstyle\prime$}} \subseteq hU_h  $,  which obviously implies  \eqref{eq: u.r=0 mod hU'}.

   At last, the final part of the statement, that is, the one about  $ R_h \# U_h^{\prime} $  being a quantisation of  $ R \# J(L^*)  $, follows from the above together with  Theorem \ref{thm: QDP}.
\end{proof}

Again, starting from right quantum universal enveloping bialgebroids yields an analogous result.
To conclude, let us sum up the content of  Theorems \ref{thm: vee} \& \ref{thm: primeoprime} in a unified and concise statement;  to this end, let us introduce the following subcategories first:

\begin{definition}
  \label{def: action-quantum-groupoids}
 Let  $ R_h $  be any  $ h $-topologically  complete $ k[[h]]$-algebra  such that  $  R_h / hR_h  $  is commutative.  We denote by:
 \begin{enumerate}
   \compactlist{50}
 \item
   $A_h\mbox{-}\lqfr$ the subcategory of  $  \lqfr $  whose objects are quantum groupoids of the form  $  R_h \# F_h  $  for some  $  F_h \in \lqfa $ such that $R_h$ is a braided commutative monoid in $\ydfh$;
\item
  $A_h\mbox{-}\lqur$ the subcategory
of  $  \lqur $  whose objects are quantum groupoids of the form  $  R_h \# U_h  $  for some  $  U_h \in \lqua $ such that $R_h$ is a braided commutative monoid in $\yduh$.
  \end{enumerate}
 Objects in these categories  will be referred to as {\em (left) action quantum groupoids}.
\end{definition}

As always, a similar definition applies when replacing {\em left} by {\em right} everywhere.
  Our recapitulatory statement then reads as follows:

  \begin{theorem}
    \label{thm: recap-Drinf-funct_act-q-grpd's}
    Let  $ R_h $  be any  $ h $-topologically  complete $ k[[h]]$-algebra  such that  $  R_h / hR_h  $  is commutative.
    Then the two Drinfeld functors  $ (-)^\vee \colon
  \lqfr \to \lqur
    $
    and
    $  (-)' \colon
   \lqur \to \lqfr
    $
restrict to functors
$$
(-)^\vee \colon
A_h\mbox{-}\lqfr \to A_h\mbox{-}\lqur   ,
\qquad
(-)' \colon
A_h\mbox{-}\lqur \to A_h\mbox{-}\lqfr   ,
$$
 which are isomorphisms (between categories) that are inverse to each other. A parallel, right-handed statement holds true as well.
\end{theorem}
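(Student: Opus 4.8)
The plan is to reduce the statement to Theorems~\ref{thm: vee} and~\ref{thm: primeoprime} together with the quantum duality principle; the only point that really needs an argument is that the technical hypotheses occurring in those two theorems are \emph{automatically} satisfied for the objects of $A_h\mbox{-}\lqfr$, respectively $A_h\mbox{-}\lqur$.

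First I would check that $(-)^\vee$ maps $A_h\mbox{-}\lqfr$ into $A_h\mbox{-}\lqur$. The key remark is that, since an object $R_h \# F_h$ of $A_h\mbox{-}\lqfr$ lies by definition in $\lqfr$, its semiclassical limit $R \# (F_h/hF_h)$, where $R := R_h/hR_h$, is isomorphic \emph{as a left bialgebroid over $R$} to a jet bialgebroid $J\!L'$ for some finitely generated projective Lie--Rinehart algebra $(L',R)$, and jet bialgebroids are commutative algebras, being $R$-linear duals of the cocommutative coalgebras $V\!L'$. Consequently $(r \otimes_{A_h} 1_{F_h})(1_{R_h} \otimes_{A_h} f)$ and $(1_{R_h} \otimes_{A_h} f)(r \otimes_{A_h} 1_{F_h})$ agree modulo $h$ in $R_h \# F_h$ for all $r \in R_h$ and $f \in F_h$, so Corollary~\ref{cor:B[smash]F h-commutative} forces $\ker \gve_h \cdot R_h \subseteq h R_h$, which is exactly the hypothesis of Theorem~\ref{thm: vee}. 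That theorem then applies and identifies $(R_h \# F_h)^\vee$ canonically with $R_h \# F_h^\vee$, where $F_h^\vee \in \lqua$, $R_h$ is a braided commutative YD algebra over $F_h^\vee$, and $R_h \# F_h^\vee$ is a left quantum universal enveloping bialgebroid over $R_h$, i.e.\ an object of $\lqur$; hence $(R_h \# F_h)^\vee \in A_h\mbox{-}\lqur$. On morphisms there is nothing further to verify, as $(-)^\vee$ is already a functor on all of $\lqfr$.

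Dually, I would check that $(-)'$ maps $A_h\mbox{-}\lqur$ into $A_h\mbox{-}\lqfr$. Here the key remark is that, since $R_h \# U_h \in A_h\mbox{-}\lqur \subseteq \lqur$, its semiclassical limit is isomorphic as a left bialgebroid over $R$ to a universal enveloping bialgebroid $V\!L'$, for which the source and target maps coincide (a standard property of universal enveloping bialgebroids); comparing this with the formulae~\eqref{celivre} for $R \# (U_h/hU_h)$ yields $r \otimes_A 1 = r_{[0]} \otimes_A r_{[1]}$ for all $r \in R$, that is, the right $U_h$-coaction $\rho_{R_h}$ on $R_h$ satisfies $\rho_{R_h}(r) \equiv r \otimes_{A_h} 1_{U_h}$ modulo $h$, which is precisely the hypothesis of Theorem~\ref{thm: primeoprime}. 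That theorem then identifies $(R_h \# U_h)'$ canonically with $R_h \# U_h'$, where $U_h' \in \lqfa$, $R_h$ is a braided commutative YD algebra over $U_h'$, and $R_h \# U_h'$ is a left quantum formal series bialgebroid over $R_h$, i.e.\ an object of $\lqfr$; hence $(R_h \# U_h)' \in A_h\mbox{-}\lqfr$, again with no separate morphism check.

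Finally, I would conclude by invoking the quantum duality principle (\cite{CheGav:DFFQG}; cf.\ Theorem~\ref{thm: QDP}): the functors $(-)^\vee \colon \lqfr \to \lqur$ and $(-)' \colon \lqur \to \lqfr$ are mutually inverse isomorphisms of categories, and the two previous paragraphs exhibit their restrictions to functors $A_h\mbox{-}\lqfr \to A_h\mbox{-}\lqur$ and $A_h\mbox{-}\lqur \to A_h\mbox{-}\lqfr$. As on these subcategories the two composites of the restricted functors agree with the corresponding composites of the global ones, they are the identities, whence the restricted functors are mutually inverse isomorphisms of categories; the right-handed assertion follows verbatim from the right-handed versions of Theorems~\ref{thm: vee} and~\ref{thm: primeoprime} and of the quantum duality principle. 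The single genuinely substantive step here --- everything else being bookkeeping with the earlier results --- is the passage, in the two middle paragraphs, from membership in $\lqfr$ (resp.\ $\lqur$) to commutativity (resp.\ to the coincidence of source and target) of the semiclassical limit, which is exactly what renders the technical hypotheses of Theorems~\ref{thm: vee} and~\ref{thm: primeoprime} redundant in this setting.
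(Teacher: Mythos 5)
Your proposal is correct and follows the route the paper intends: Theorem \ref{thm: recap-Drinf-funct_act-q-grpd's} is presented there as a recapitulation of Theorems \ref{thm: vee} and \ref{thm: primeoprime} combined with the quantum duality principle, and no separate proof is supplied. What you add---and what the paper leaves entirely implicit---is the verification that the technical hypotheses of those two theorems, namely $\ker\gve_h\cdot R_h\subseteq hR_h$ and $\rho_{R_h}(r)\equiv r\otimes_{A_h}1 \bmod h$, hold automatically for objects of $A_h\mbox{-}\lqfr$, resp.\ $A_h\mbox{-}\lqur$: your derivation of the first from the commutativity of the semiclassical limit $J\!L'$ via Corollary \ref{cor:B[smash]F h-commutative}, and of the second from $s=t$ in $V\!L'$ via the formulae \eqref{celivre}, is sound and is precisely what reconciles Definition \ref{def: action-quantum-groupoids} with the unconditional form of the statement. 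The only cosmetic caveat is that the mutual inverseness of the global Drinfeld functors is quoted from \cite{CheGav:DFFQG} rather than being literally contained in Theorem \ref{thm: QDP} as stated here, but that is exactly what the paper itself relies on.
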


  \begin{rem}
    \label{rem:smash quantum = quantum smash}
    Directly by construction and/or from our results above, one sees that  $  R_h \# H_h  $,  resp.\  $  H_h \# R_h  $,  is a left, resp.\ right, quantum groupoid over  $ R_h  $,  that is, a quantum universal enveloping or formal series bialgebroid if $ H_h  $ is so.  More precisely, every action quantum groupoid is a ``quantum action groupoid'', in the following sense: with assumptions as in  Definition \ref{def: action-quantum-groupoids}  above,
 if  $ H_h $ is a quantisation of the (classical) bialgebroid  $ H $  over  $  A := A_h / h A_h  $,   then  $  R := R_h / h R_h  $  is a braided commutative YD algebra
 for  $ H  $,  and  $  R_h \# H_h  $,  resp.\  $  H_h \# R_h  $,  is a quantisation of the (classical) action  bialgebroid  $  R \# H  $,  resp.\  $  H \# R  $.
\end{rem}

\begin{exs}  \label{pomeriggio}
\
 \begin{enumerate}
    \compactlist{99}
  \item
    Let  $ F_h := F_h[[G]] $  be a quantum formal series Hopf algebra over a Poisson (algebraic~or Lie) group  $ G $; hence, it is a particular type of (left and right) quantum formal series bialgebroid, {\em cf.}\  \cite{CheGav:DFFQG}  and references therein.  As every Hopf algebra is a braided commutative YD algebra over itself (either by means of the adjoint action and the coproduct or by the product and the coadjoint coaction), one obtains an action bialgebroid  $(F_h \# F_h, F_h)$  whose semiclassical limit is  $  F[[G]] \# F[[G]] = F[[G \ltimes G]]  $:  therefore, we can see  $F_h \# F_h$  as a quantisation
    of the action groupoid  $ G \ltimes G $;
    this action bialgebroid  $(F_h \# F_h, F_h)$ whose underlying algebra structure is that of the {\em Heisenberg double}, could be termed  {\em Weyl Hopf algebroid}, see Example \ref{sable}, while the underlying action groupoid  $ G \ltimes G $  is the
      {\em symplectic double}  of  $ G $  (see, for instance, \cite{Lu:MMATQL, Lu:HAAQG}).

   Now, let us apply  Theorem \ref{thm: vee}  to the action quantum groupoid  $  F_h \# F_h  $: the outcome is that  $( F_h \# F_h)^\vee = F_h \# F_h^{\vee} = F_h \# U_h  $, where  $  U_h = U_h(\g^*) := F_h^{\vee}  $, is a new action quantum groupoid (actually, a left quantum universal enveloping bialgebroid) whose semiclassical limit is a (left) action bialgebroid of the form  $  F[[G]] \# U(\g^*) = V( G \rtimes \g^*)  $,  with respect to a well-defined structure of
   braided commutative YD algebra
 over  $ U(\g^*) $ on  $ F[[G]] $.  In short,  $ ( F_h \# F_h )^\vee  = F_h \# F_h^{\vee}  $  is a quantisation of the action Lie-Rinehart algebra  $  F[[G]] \rtimes \g^*  $,  or also (in an infinitesimal sense) of the action groupoid  $  G \rtimes G^*  $.
  \item
    Let  $  U_h := U_h(\g)  $  be a quantum universal enveloping algebra over the Lie bialgebra  $ \g  $,  so it is a particular type of (left and right) quantum universal enveloping bialgebroid,
    {\em cf.}\ again \cite{CheGav:DFFQG}.
    Note that  $ U_h $  is a direct limit of a directed system of finite projective (left and right)  $ A_h $-modules:  then the  definition/construction of left and right dual bialgebroids can be repeated for  $ U_h $  (as explained in  \S\ref{limits},  also taking into account that we are now in the simpler context of (topological) Hopf algebras rather than bialgebroids) so that the dual  $  F_h = F_h[[G]] := \big( U_h(\g) \big)^* = \big( U_h(\g) \big)_*  $  is a quantum formal series Hopf algebra over the (formal) group  $ G $  associated to  $ \g  $,  in the sense of  \cite{CheGav:DFFQG}; note that,  in this case, right and left duals coincide.
    In addition (again because  $ U_h $  is a direct limit of finite projective  $ A_h $-modules),  we have an identification  $  U_h = F_h^{\star}  $  (a suitable topological dual) and, in addition, also the proof of  Lemma \ref{rosamunde} adapts to the present setup.  Therefore, by part (i),  we first consider  $ F_h $  as a braided commutative YD algebra over itself and consider the left (action) bialgebroid  $  (F_h \# F_h, F_h)  $;  second, applying  Lemma \ref{rosamunde},  we find that  $ F_h $  is also a braided commutative YD algebra over  $ U_h  $.  This, in turn, leads us to introduce the right (action) bialgebroid  $ (U_h \# F_h, F_h)  $  whose semiclassical limit is  $  U(\g) \# F[[G]] = V\big(\g \ltimes F[[G]]\big)  $:  therefore, we can see  $ U_h \# F_h $  as a quantisation of the symplectic groupoid  $  G \ltimes G  $,  or rather of the associated (tangent) action right Lie-Rinehart algebra  $  \g \ltimes F[[G]]  $  over the base algebra  $ F[[G]]  $.

   Now, let us apply (the right-handed version of) Theorem \ref{thm: primeoprime}  to the action quantum groupoid  $  U_h \# F_h  $.  Writing  $  F_h[[G^*]] := U_h^{\prime}  $,  the outcome now is that  $  ( U_h \# F_h)'  = U_h^{\prime} \# F_h = F_h[[G^*]] \# F_h  $  is a new action quantum groupoid (actually, a right quantum formal series bialgebroid) whose semiclassical limit is a (right) action bialgebroid of the form  $  F[[G^*]] \# F[[G]] = F[[ G \rtimes G^*]]  $,  with respect to a well-defined structure of
   braided commutative YD algebra
 over  $ F[[G^*]] $  on  $ F[[G]] $.  In short,  $  ( U_h \# F_h )'  = U_h^{\prime} \# F_h  $  is a quantisation of the action groupoid  $  G \rtimes G^*  $.
  \item
    Comparing the constructions in  parts (i) \& (ii) above, if we take  $ F_h $  and  $ U_h $  dual to each other, that is, $  F_h = U_h^{\ast}  $  and  $  U_h = F_h^{\star}  $,  then the two action bialgebroids $  F_h \# F_h  $  and  $  U_h \# F_h  $  are connected via  Lemma \ref{rosamunde}:  in particular, they are dual to each other again.  But then,  by  Lemma \ref{rosamunde},  Theorem \ref{thm: vee},  and  Theorem \ref{thm: primeoprime}, we find that also the action bialgebroids  $  (F_h \# F_h)^\vee = F_h \# F_h^\vee  $  and  $  (U_h \# F_h)' = U_h^\prime \# F_h  $  are dual to each other.  Therefore, the same happens with their semiclassical limits  $  F[[G]] \# F[[G^*]]  $  and  $
    U( \g^*) \# F[[G]]  $.
    The outcome is that the mutually dual quantum action groupoids  $  F_h \# F_h^\vee  $  and  $  U_h^\prime \# F_h  $ provide mutually dual quantisations of a common, underlying geometrical datum,
 the action groupoid  $  G \rtimes G^*  $.
  \item
From a different point of view, it is explained in  \cite{Lu:MMATQL, Lu:HAAQG} that all quantum action bialgebroids presented in  (i), (ii),   and  (iii)  above can be seen as providing a quantisation of the {\em dressing action}  of  $ G^* $  on  $ G $  and of  $ G $  on  $ G^* $.
 \end{enumerate}
\end{exs}

\appendix

\section{Comodules and YD modules over bialgebroids}
\label{A}

The next few sections lists a multitude of technical details needed in the main text. With the exception of \S\ref{naemlichhier}, all of this is standard material (or can be more or less directly deduced from such), and can be found anywhere in the bialgebroid literature.

As for the categories $\umod$ and $\modu$ of left resp.\ right modules over a left bialgebroid $(U, A, s, t, \gD, \gve)$, although the first one is monoidal and the second one is not, in both cases one has a forgetful functor $\umod \to \amoda$ resp.\ $\modu \to \amoda$ with respect to which we denote
\begin{equation}
  \label{soedermalm}
a \lact m \ract b := s(a)t(b)m, \qquad b \blact n \bract a := ns(a)t(b)
  \end{equation}
for $a,b\in A$, $m \in \umod$, and $n \in \modu$.

\subsection{}
\label{schokoladenraspel1}
\hspace*{-.3cm}
A {\em right comodule} over a left bialgebroid $(U,A)$ is a pair $(M,\rho_M)$ of a right $A$-module
and a
right $A$-linear $U$-coaction $\rho_M\colon M\to M \otimes_A \due U \lact {}$, which induces a left $A$-action with respect to which the coaction becomes linear as well and corestricts to the Takeuchi subspace $M \times_A U$:
\begin{equation}
  \label{ha3}
\rho_M(a m a' )
= m_{[0]} \otimes_A a \blact m_{[1]} \ract a',
\quad
am_{[0]}\otimes_A m_{[1]}  = m_{[0]}\otimes_A m_{[1]} \bract a,
\end{equation}
where $am := m_{[0]} \gve(m_{[1]}\bract a)$ is the induced left $A$-action and $\rho_M(m) =: m_{[0]}\otimes_A m_{[1]}$, using a Sweedler square bracket subscript notation. The respective category is denoted by $\comodu$, while the category $\ucomod$ of {\em left} $U$-comodules is defined along analogous lines.

\subsection{}
\label{ydydydyd}
\hspace*{-.3cm}
A {\em left-left Yetter Drinfeld (YD)} module $M$ over a left bialgebroid $(U,A)$ is at the same time
a left $U$-module $U \otimes M \to M, \ (u,m) \mapsto um$ and a left $U$-comodule $\gl_M \colon M \to U_\ract \otimes_A M, \ m \mapsto m_{(-1)} \otimes_A m_{(0)}$ such that the forgetful functors $\umod \to \amoda$ and $\ucomod \to \amoda$ coincide on $M$, that is,
\begin{equation*}
 a \lact m \ract a' = ama'
\end{equation*}
for $a, a' \in A$ and $m \in M$,
and, in particular, such that for all $u \in U$ and $m \in M$
\begin{equation}
  \label{yd}
u_{(1)} m_{(-1)} \otimes_A u_{(2)} m_{(0)} = (u_{(1)} m)_{(-1)}  u_{(2)} \otimes_A  (u_{(1)} m)_{(0)}
\end{equation}
holds, 
the corresponding category of which is braided  monoidal (see below) with respect to $\otimes_A$, unit object given by the base algebra $A$, and denoted by $\yd$. There is a monoidal equivalence between this category and the (left weak) monoidal centre of the category $\umod$.

\subsection{}
\label{ydydydydr}
\hspace*{-.3cm}
Likewise,
a {\em left-right Yetter Drinfeld} module $M$ over a left bialgebroid $(U,A)$ is
at the same time a left $U$-module
$U \otimes M \to M, \ (u,m) \mapsto um$
and a right $U$-comodule $\gr_M \colon M \to M \otimes_A \due U \lact {}, \ m \mapsto m_{[0]} \otimes_A m_{[1]}$ such that the forgetful functors $\umod \to \amoda$ and $\comodu \to \amoda$ coincide on it, {\em i.e.},
\begin{equation}
  \label{ydforget2}
a \lact m \ract a' = ama'
\end{equation}
for $a, a' \in A$ and $m \in M$,
and such that for all $u \in U$ and $m \in M$
\begin{equation}
  \label{yd2}
u_{(1)} m_{[0]} \otimes_A u_{(2)} m_{[1]} = (u_{(2)} m)_{[0]}  \otimes_A  (u_{(2)} m)_{[1]}u_{(1)}
\end{equation}
holds , the corresponding category of which is braided  monoidal (see below) with respect to $\otimes_A$, unit object given by the base algebra $A$, and denoted by $\ydu$.
This time, there is a monoidal equivalence between this category and the (right weak) monoidal centre of the category $\umod$.

\subsection{}
\label{trecce1}
\hspace*{-.3cm}
A {\em monoid in $\ydu$} for a left bialgebroid $(U,A)$ is a left-right YD module $R \in \ydu$, which is also
       a monoid in $\umod$, meaning not only
\begin{equation}
  \label{marmor1}
u(r \cdot r') = (u_{(1)} r) \cdot (u_{(2)} r'), \qquad u1_R = \gve(u) \lact 1_R,
\end{equation}
but also
\begin{equation}
  \label{marmor2}
a (r \cdot r') = (a  r) \cdot r', \qquad (r \cdot r') a = r \cdot (r' a), \qquad (r a) \cdot r' =  r \cdot (a r')
%
\end{equation}
for the left and right $A$-actions on $R$.
Moreover, it is a monoid in $\comoduop$, that is,
\begin{equation}
  \label{marmor3}
(r \cdot r')_{[0]} \otimes_A (r \cdot r')_{[1]} = r_{[0]}  \cdot r'_{[0]} \otimes_A r'_{[1]}  r_{[1]}, \qquad 1_{[0]} \otimes_A 1_{[1]} = 1_R \otimes_A 1_U.
\end{equation}
      Finally, a monoid in $\ydu$ is called {\em braided commutative} if
\begin{equation}
  \label{marmor4}
r \cdot r' = r'_{[0]} \cdot (r'_{[1]} r)
\end{equation}
for $r, r' \in R$  is fulfilled.

\subsection{}
\label{schokoladenraspel2}
\hspace*{-.3cm}
Given a right $B$-bialgebroid $(V,B, s, t, \Delta,\pl)$, recall that a {\em left $V$-comodule} is a pair $(M,\lambda_M)$ of a left $B$-module
and a
left $B$-linear left $V$-coaction $\lambda_M\colon M\to V_\bract \otimes_B M$ inducing a right $B$-action such that the coaction becomes linear as well and corestricts to $V_\bract \times_B M$:
\begin{equation}
  \label{ha2}
\lambda_M(b m b' )
=  b \blact m^{(-1)} \ract b' \otimes_B m^{(0)},\quad
m^{(-1)}\otimes_B m^{(0)} b = b \lact m^{(-1)}\otimes_B m^{(0)},
\end{equation}
where $m b:=\pl(b \blact m^{(-1)}) m^{(0)}$ is the right $B$-action, and where we employed the Sweedler superscript notation $\lambda_M(m)=m^{(-1)}\otimes_B m^{(0)}$. Denote by $\vcomod$
the category of left $V$-comodules, while
$\comodv$ denotes the category of {\em right} $V$-comodules.

\subsection{}
\label{rydydydydr}
\label{rydydydydl}
\hspace*{-.3cm}
In a similar spirit as for left bialgebroids, one can define right-right and right-left YD modules for {\em right} bialgebroids, see below, but not left-right or left-left ones. For example,
a {\em right-left Yetter Drinfeld module} $M$ over a right bialgebroid $(V,B)$ is
at the same time a right $V$-module $M \otimes V \to M, \ (m,v) \mapsto mv$ and a left $V$-comodule $\gl_M \colon M \to V_\bract \otimes_B M$, $m \mapsto m^{(-1)} \otimes_B m^{(0)}$
such that the forgetful functors $\umod \to \bmodb$ and $\comodu \to \bmodb$ coincide on $M$, that is,
\begin{equation*}
b \blact m \bract b' = bmb'
\end{equation*}
for $b, b' \in B$ and $m \in M$, plus the compatibility
\begin{equation}
  \label{borghese}
m^{(-1)} v^{(1)} \otimes_B m^{(0)}   v^{(2)}
= v^{(2)} (m v^{(1)})^{(-1)} \otimes_B (m v^{(1)})^{(0)}
\end{equation}
between action and coaction. Similar comments to what was said in \S\ref{ydydydyd} \& \S\ref{ydydydydr} apply, and a similar definition can be made for {\em right-right Yetter-Drinfeld} modules over right bialgebroids.

\subsection{}
\label{trecce2}
\hspace*{-.3cm}
In an analogous spirit,
a {\em monoid in $\vyd$} for a right bialgebroid $(V,B)$ is, first of all, a right-left YD module $R \in \vyd$ which is, second,
       a monoid in $\modv$, that is,
       \begin{equation*}
(r \cdot r')   v = (r   v^{(1)}) \cdot (r'   v^{(2)}), \qquad 1_Rv = 1_R \bract \pl(v),
       \end{equation*}
       which implies
       \begin{equation*}
b (r \cdot r') b' = (b r) \cdot (r' b'),
\qquad
(r b) \cdot r' = r \cdot (b r')
        \end{equation*}
for the left and right $B$-actions on $R$ and $b, b' \in B$.
       Third, it is also a monoid in $\vopcomod$: 
       \begin{equation}
         \label{zloty3}
(r \cdot r')^{(-1)} \otimes_B (r \cdot r')^{(0)}
=
\tilde r^{(-1)}  r^{(-1)} \otimes_B r^{(0)} \cdot \tilde r^{(0)}, \qquad 1^{(-1)} \otimes_B 1^{(0)} = 1_V \otimes_B 1_R.
       \end{equation}
       A monoid in $\vyd$ is called {\em braided commutative} if
\begin{equation}
  \label{kindertotenlieder}
r \cdot r' = (r'   r^{(-1)}) \cdot r^{(0)},
\end{equation}
for $r, r' \in R$       is fulfilled.

\section{Left and right and right and left Hopf algebroids}
\label{B}

\subsection{Left and right Hopf algebroids over left bialgebroids}
\label{sokc}

For a left bialgebroid $(U, A)$, consider the maps
\begin{equation*}
\begin{array}{rclrcl}
\ga_\ell  \colon  \due U \blact {} \otimes_{\Aop} U_\ract &\to& U_\ract  \otimes_A  \due U \lact,
 & u \otimes_\Aop u'  &\mapsto&  u_{(1)} \otimes_A u_{(2)}  u',
 \\
\ga_r  \colon  U_{\!\bract}  \otimes_A \! \due U \lact {}  &\to& U_{\!\ract}  \otimes_A  \due U \lact,
&  u \otimes_A u'  &\mapsto&  u_{(1)}  u' \otimes_A u_{(2)},
\end{array}
\end{equation*}
of left $U$-modules.
Then $(U,A)$ is called
(a)
{\em left Hopf (algebroid)} if $ \alpha_\ell $ is invertible and (a)
{\em right Hopf (algebroid)} 
if this is the case for $\ga_r$. Abbreviating in a Sweedler spirit
\begin{equation*}
  \begin{array}{rcl}
 u_+ \otimes_\Aop u_-  & \coloneqq &  \alpha_\ell^{-1}(u \otimes_A 1),
 \\
   u_{\smap } \otimes_A u_{\smam  }  & \coloneqq &  \alpha_r^{-1}(1 \otimes_A u),
\end{array}
  \end{equation*}
with implicit summation, one
has
\begin{eqnarray}
\label{Sch1}
u_+ \otimes_\Aop  u_- & \in
& U \times_\Aop U,  \\
\label{Sch2}
u_{+(1)} \otimes_A u_{+(2)} u_- &=& u \otimes_A 1 \quad \in U_{\!\ract} \! \otimes_A \! {}_\lact U,  \\
\label{Sch3}
u_{(1)+} \otimes_\Aop u_{(1)-} u_{(2)}  &=& u \otimes_\Aop  1 \quad \in  {}_\blact U \! \otimes_\Aop \! U_\ract,  \\
\label{Sch4}
u_{+(1)} \otimes_A u_{+(2)} \otimes_\Aop  u_{-} &=& u_{(1)} \otimes_A u_{(2)+} \otimes_\Aop u_{(2)-},  \\
\label{Sch5}
u_+ \otimes_\Aop  u_{-(1)} \otimes_A u_{-(2)} &=&
u_{++} \otimes_\Aop u_- \otimes_A u_{+-},  \\
\label{Sch6}
(uv)_+ \otimes_\Aop  (uv)_- &=& u_+v_+ \otimes_\Aop v_-u_-,
\\
\label{Sch7}
u_+u_- &=& s (\varepsilon (u)),  \\
\label{Sch8}
\varepsilon(u_-) \blact u_+  &=& u,  \\
\label{Sch9}
(s (a) t (a'))_+ \otimes_\Aop  (s (a) t (a') )_-
&=& s (a) \otimes_\Aop s (a')
\end{eqnarray}
for the left Hopf structure \cite{Schau:DADOQGHA},
where in  \eqref{Sch1} the Takeuchi-Sweedler product
\begin{equation*}
\label{petrarca}
   U \! \times_\Aop \! U    \coloneqq
   \big\{ {\textstyle \sum_i} u_i \otimes u'_i \in {}_\blact U  \otimes_\Aop  U_{\!\ract} \mid {\textstyle \sum_i} u_i \ract a \otimes u'_i = {\textstyle \sum_i} u_i \otimes a \blact u'_i, \ \forall a \in A \big\},
\end{equation*}
is meant. If the left bialgebroid $(U,A)$ is right Hopf,
one analogously verifies
\begin{eqnarray}
\label{Tch1}
u_{\smap } \otimes_A  u_{\smam  } & \in
& U \times_A U,  \\
\label{Tch2}
u_{\smap (1)} u_{\smam  } \otimes_A u_{\smap (2)}  &=& 1 \otimes_A u \quad \in U_{\!\ract} \! \otimes_A \! {}_\lact U,
\\
\label{Tch3}
u_{(2)\smap } \otimes_A u_{(2)\smam  }u_{(1)}  &=& u \otimes_A 1 \quad \in U_{\!\bract} \!
\otimes_A \! \due U \lact {},  \\
\label{Tch4}
u_{\smap (1)} \otimes_A u_{\smam  } \otimes_A u_{\smap (2)} &=& u_{(1)\smap } \otimes_A
u_{(1)\smam  } \otimes_A  u_{(2)},  \\
\label{Tch5}
u_{\smap \smap } \otimes_A  u_{\smap \smam  } \otimes_A u_{\smam  } &=&
u_{\smap } \otimes_A u_{\smam  (1)} \otimes_A u_{\smam  (2)},  \\
\label{Tch6}
(uv)_{\smap } \otimes_A (uv)_{\smam  } &=& u_{\smap }v_{\smap }
\otimes_A v_{\smam  }u_{\smam  },  \\
\label{Tch7}
u_{\smap }u_{\smam  } &=& t (\varepsilon (u)),  \\
\label{Tch8}
u_{\smap } \bract \varepsilon(u_{\smam  })  &=&  u,  \\
\label{Tch9}
(s (a) t (a'))_{\smap } \otimes_A (s (a) t (a') )_{\smam  }
&=& t(a') \otimes_A t(a),
\end{eqnarray}
see \cite[Prop.~4.2]{BoeSzl:HAWBAAIAD},
where in  \eqref{Tch1} we denoted
\begin{equation*}  \label{petrarca2}
   U \times_A U    \coloneqq
   \big\{ {\textstyle \sum_i} u_i \otimes  v_i \in U_{\!\bract}  \otimes_A \!  \due U \lact {} \mid {\textstyle \sum_i} a \lact u_i \otimes v_i = {\textstyle \sum_i} u_i \otimes v_i \bract a,  \ \forall a \in A  \big\}.
\end{equation*}
For a left bialgebroid $(U,A)$ that is both left and right Hopf,
one even has the mixed relations
\begin{eqnarray}
\label{mampf1}
u_{+\smap } \otimes_\Aop u_{-} \otimes_A u_{+\smam  } &=& u_{\smap +} \otimes_\Aop u_{\smap -} \otimes_A u_{\smam  }, \\
\label{mampf2}
u_+ \otimes_\Aop u_{-\smap } \otimes_A u_{-\smam  } &=& u_{(1)+} \otimes_\Aop u_{(1)-} \otimes_A u_{(2)}, \\
\label{mampf3}
u_{\smap} \otimes_A u_{{\smam}+} \otimes_\Aop u_{\smam  -} &=& u_{(2)\smap } \otimes_A u_{(2)\smam  } \otimes_\Aop u_{(1)},
\end{eqnarray}
as proven in \cite[Lem.~2.3.4]{CheGavKow:DFOLHA}.

\subsection{Left Hopf algebroids over right bialgebroids}

Of course, the notion
of (left or right)
Hopf algebroid also exist if the underlying bialgebroid is a right instead of a left one. Since we are going to deal with the dual of left bialgebroids (which are right bialgebroids), we will need (at least one of) these concepts as well:
we say that the right bialgebroid $(V,B)$ is a
{\em left  Hopf algebroid} if the Hopf-Galois map
\begin{equation}
\label{ibianchi2}
\gb_\ell \colon V_\bract {} \otimes_B \due V \lact {} \to   V_\bract  \otimes_B  \due U \blact {}, \quad
v'  \otimes_B v  \mapsto     v^{(1)} \otimes_B v' v^{(2)},
  \end{equation}
is bijective. Again, we define the corresponding translation map by
\begin{equation}
\label{ibianchi3}
v^{\smam} \otimes_B v^{\smap}  :=  \gb^{-1}_\ell(v \otimes_B 1).
  \end{equation}
As a side comment, in case $B=k$ is central in $V$, the map $\gb_\ell$ is invertible if and only if $V$ is a Hopf algebra, and one has
$ v^{\smam} \otimes v^{\smap} = S^{-1}(v_{(2)}) \otimes v_{(1)}$.

\begin{rem}
\label{evenmoreconfusion}
This latter definition might now lead to slight confusion in terminology as saying ``left/right Hopf algebroid'' does not specify whether the underlying bialgebroid is left or right, whereas ``left/right Hopf algebroid over a left/right bialgebroid'' or the like unfortunately appears somewhat clumsy.
We still want to stress that interchanging left and right here is more than a pure exercise in chirality yoga: this determines the monoidality of the respective category of modules.
\end{rem}

Let $(V,B)$ be a left Hopf algebroid over an underlying right bialgebroid.
The following identities are true with respect to $\gb^{-1}_\ell(v \otimes_B 1)$ from \eqref{ibianchi3}:
\begin{eqnarray}
\label{Uch1}
v^{\smam} \otimes_B  v^{\smap} & \in
& V \times_B V,
\\
\label{Uch2}
v^{\smap(1)} \otimes_B v^\smam v^{\smap(2)}  &=& v \otimes_B 1 \quad \in V_{\bract}  \otimes_B  {}_\blact V,  \\
\label{Uch3}
v^{(2)}v^{(1)\smam} \otimes_B v^{(1)\smap}  &=& 1 \otimes_B v \quad \in V_\bract
\otimes_B  \due V  \lact {},  \\
\label{Uch4}
v^{(1)} \otimes_B v^{(2)\smam} \otimes_B v^{(2)\smap}  &=&
v^{\smap(1)} \otimes_B v^{\smam} \otimes_B v^{\smap(2)},
\\
\label{Uch5}
v^{\smam(1)} \otimes_B v^{\smam(2)} \otimes_B v^{\smap} &=&
v^{\smam} \otimes_B v^{\smap\smam} \otimes_B v^{\smap\smap},
\\
\label{Uch6}
(vw)^\smam \otimes_B (vw)^\smap &=& w^\smam v^\smam \otimes_\Bop v^\smap w^\smap,  \\
\label{Uch7}
v^\smam v^\smap &=& t^r (\pl (v)),
\\
\label{Uch8}
v^\smap \bract \pl(v^\smam)   &=&  v,
\\
\label{Uch9}
(s^r (b) t^r (b'))^\smam \otimes_B (s^r (b) t^r (b') )^\smap
&=& t^r(b) \otimes_B t^r(b'),
\end{eqnarray}
where in  \eqref{Uch1}  we mean the Sweedler-Takeuchi product
\begin{equation*}
   V \times_B V   :=
   \big\{ {\textstyle \sum_i} v_i \otimes  w_i \in V \otimes_B  V  \mid {\textstyle \sum_i} b \lact v_i  \otimes w_i = {\textstyle \sum_i} v_i \otimes  w_i \bract b,  \ \forall b \in B  \big\}.
\end{equation*}

\section{Duals of bialgebroids}
\label{C}

\subsection{Left and right duals}
\label{duals}
In this section, we mainly recall how the left dual
$$
{U_*} := \Hom_A(\due U \lact {}, \due A A {})
$$
for a left bialgebroid $(U,A)$ becomes a right bialgebroid if the left $A$-module $\due U \lact {}$  is finitely generated projective over $A$.
In longer expressions, we will frequently write $\langle \psi, u \rangle$ to mean $\psi(u)$ for $\psi \in {U_*}$ and $u \in U$, as this increases readability (at least in our opinion).
A discussion similar to what follows also holds for the {\em right} dual
$$
U^* := \Hom_\Aop(U_\ract, A_A),
$$
the details of which are found, {\em e.g.}, in \cite[\S2.4]{Kow:WEIABVA}. Starting from a right bialgebroid $(V,B)$ leads in an analogous way to two duals denoted $\due V * {} $ and $^*V$; see, for example, \cite[\S3.4]{CheGav:DFFQG}.

More in detail, dualising a left bialgebroid
$(U, A, s^\ell, t^\ell, \gD_\ell, \gve)$  gives a right
bialgebroid $({U_*}, A, s^r, t^r, \gD_r, \pl)$ over the same base algebra
if $\due U \lact {}$ is  assumed
to be a finitely generated left $A$-module via the source map.
To begin with, the product is given by
\begin{equation}
\label{LDMon}
(\psi \psi')(u)
:= \langle \psi',  u_{(1)} \ract \langle \psi, u_{(2)} \rangle  \rangle,
\end{equation}
where $u \in U$, $\psi, \psi' \in {{U_*}}$, which 
induces
an $A^e$-ring structure by defining source and target:
\begin{equation*}
\label{LDSource}
 s^r \colon A \to  {U_*}, \ a \mapsto \gve((\cdot) \ract a) = \gve(\cdot)a, \qquad  t^r \colon A \to  {U_*}, \ a \mapsto \gve(a \blact (\cdot)).
\end{equation*}
Denoting as in \eqref{soedermalm} for every (left or right) bialgebroid
the four $A$-actions on ${U_*}$ by
$$
a \lact \psi  \ract b = s^r(a) t^r(b) \psi , \quad a \blact \psi  \bract b =
\psi    t^r(a) s^r(b)
$$
for $\psi  \in {U_*}$, $a,b \in A$,
one obtains the identities
\begin{equation}
\label{duedelue}
\!\!\!\!\!\!\begin{array}{rclrclrcl}
\langle\psi  , a \lact u \rangle  \!\!\!&=\!\!\!&
a \langle \psi  , u \rangle, &
\langle \psi  , u \ract a \rangle  \!\!\!&=\!\!\!&
\langle a \lact \psi , u \rangle, &
\langle \psi  , a \blact u  \rangle  \!\!\!&=\!\!\!&
\langle \psi \ract a , u \rangle,
\\
\langle \psi  , u \bract a \rangle   \!\!\!&=\!\!\!&
 \langle a \blact \psi, u \rangle, &
 \langle \psi \bract a, u \rangle  \!\!\!&=\!\!\!&
\langle \psi  , u \rangle a,
\end{array}
\end{equation}
hence a {\em left $\Ae$-pairing}. The $A$-coring structure on $({U_*}, A)$, in turn, is given by
\begin{equation}
\label{weiszjanich}
\!\!\! \begin{array}{rll}
\gD_r
\!\!\! &
\colon {U_*} \to \Hom_A(U_\bract \otimes_A \due U \lact {}, A), \
&\!\!
\psi   \mapsto \{ u \otimes_A u' \mapsto \psi (u u') \},
 \\
\pl
\!\!\! &
\colon {U_*} \to A,
&\!\!
\psi  \mapsto  \psi (1_\uhhu).
\end{array}
\end{equation}
In case $\due U \lact {}$ is finitely generated $A$-projective and therefore 
${U_*}_\bract$ as well (see below), then
\begin{equation}
\label{zuschlag}
{U_*}_\bract \otimes_A  \due {{U_*}} \blact {} \to
\Hom_A(U_\bract \otimes_A \due U \lact {}, A), \quad
\psi  \otimes_A  \psi' \mapsto \{ u  \otimes_A v \mapsto
\langle \psi' , u \bract \langle \psi , u' \rangle \rangle \},
\end{equation}
is an isomorphism and the map
$\gD_r \colon \psi \mapsto \psi^{(1)} \otimes_A \psi^{(2)}$ above defines a
right (as opposed to left) coproduct, {\em i.e.}, 
\begin{equation}
\label{trattovideo}
\langle \psi ^{(2)}, u \bract \langle \psi ^{(1)} , u' \rangle \rangle = \langle \psi , u u' \rangle,
\end{equation}
and the map $\pl$ above is a (right)  counit such that the coproduct $\gD_r$ becomes counital.
Choosing a dual basis $\{e_j\}_{1 \leq j \leq n}  \in
U, \ \{e^j\}_{1 \leq j \leq n}  \in {U_*}$, one writes
\begin{equation}
\label{schizzaestrappa1}
u = \textstyle\sum_j \langle e^j, u \rangle \lact e_j ,
\end{equation}
and from there,  using \eqref{duedelue}, 
$$
\langle \psi , u \rangle
= \textstyle\sum_j \langle \psi ,  \langle e^j, u \rangle \lact e_j  \rangle
= \textstyle\sum_j \langle e^j, u \rangle \langle \psi , e_j \rangle
= \textstyle\sum_j \langle  e^j \bract \langle \psi ,  e_j \rangle, u \rangle,
$$
and therefore
\begin{equation}
\label{schizzaestrappa2}
\psi  = \textstyle\sum_j e^j \bract \langle \psi ,  e_j \rangle .
\end{equation}
Hence, as claimed above, if $\due U \lact {}$ is finitely generated $A$-projective, then ${U_*}_\bract$ is so as well.

\subsection{Duals as right/left Hopf algebroids}
\label{naemlichhier}
If the left bialgebroid $(U,A)$ is a left Hopf algebroid, then the right bialgebroid given by the right dual $(U^*,A)$ is a right Hopf algebroid, and if the left bialgebroid $(U,A)$ is a right Hopf algebroid, then the right bialgebroid given by the left dual $(U_*, A)$ is a left Hopf algebroid, as explicitly proven in \cite[\S3]{Kow:WEIABVA}:
if $\{e_i\}_{1 \leq i \leq n} \in U, \ \{ e^i\}_{1 \leq i \leq n} \in U^*$ is a dual basis for the right dual of $U$, then the translation map reads
\begin{equation*}
\phi^- \otimes_\Aopp \phi^+ := \textstyle\sum_i e^i \otimes_\Aopp (e_i \rightslice \phi)
\end{equation*}
for $ \phi \in U^*$,
where
\begin{equation*}
(u \rightslice \phi)(u')
:= \gve(\phi(u_- u') \blact u_+).
\end{equation*}
If, on the other hand, $\{e_j\}_{1 \leq j \leq n} \in U, \ \{ e^j\}_{1 \leq j \leq n} \in U_*$ is a dual basis for the left dual of $U$, then on $U_*$ one obtains the translation map as
\begin{equation}
    \label{staendigallergie1}
    \psi^\smam \otimes_A \psi^\smap := \textstyle\sum_j e^j \otimes_A (e_j \manda \psi)
\end{equation}
for $\psi \in U_*$, where
\begin{equation}
  \label{staendigallergie2}
(u \manda \psi)(u')
:= \gve(u_\smap \bract \phi(u_\smam u')).
\end{equation}
As in \cite[Lem.~3.3]{Kow:WEIABVA}, it is a direct check that
\begin{equation}
  \label{staendigallergie3}
  \langle \psi^\smam, u \rangle \lact \psi^\smap = u \manda \psi.
\end{equation}
Finally,
recall the map $S^* \colon U^* \to U_*$ from \cite[Eq.~(5.1)]{CheGavKow:DFOLHA}; one has, using \eqref{mampf3},
$$
(u \manda S^*\phi)(u') = (u' \rightslice \phi)(u)
$$
for $\phi \in U^*$, which appears somehow curious.

\end{document}